\def\Ddots{\mathinner{\mkern1mu\raise\p@
		\vbox{\kern7\p@\hbox{.}}\mkern2mu
		\raise4\p@\hbox{.}\mkern2mu\raise7\p@\hbox{.}\mkern1mu}}
\def\XXint#1#2#3{{\setbox0=\hbox{$#1{#2#3}{\int}$}
		\vcenter{\hbox{$#2#3$}}\kern-.5\wd0}}
\begin{document}
	\def\rn{{\mathbb R^n}}  \def\sn{{\mathbb S^{n-1}}}
	\def\co{{\mathcal C_\Omega}}
	\def\z{{\mathbb Z}}
	\def\nm{{\mathbb (\rn)^m}}
	\def\mm{{\mathbb (\rn)^{m+1}}}
	\def\n{{\mathbb N}}
	\def\cc{{\mathbb C}}
	
	\newtheorem{defn}{Definition}[section]
	\newtheorem{thm}{Theorem}[section]
	\newtheorem{lem}{Lemma}[section]
	\newtheorem{cor}{Corollary}[section]
	\newtheorem{rem}{Remark}[section]
	\newtheorem{pro}{Proposition}[section]
	\newtheorem{que}{Question}
	
	\renewcommand{\theequation}{\arabic{section}.\arabic{equation}}

	\title
{\bf\Large Characterizations for multilinear fractional maximal and integral operators and their commutators on generalized weighted Morrey spaces and applications
\footnotetext{{\it Key words and phrases}: Multilinear fractional maximal operators; Multilinear integral operators; Commutators; Generalized weighted Morrey spaces.
\newline\indent\hspace{1mm} {\it 2020 Mathematics Subject Classification}: Primary 42B25; Secondary 42B35, 47H60, 47B47.
\newline\indent\hspace{1mm} The second author was in part supported by National Natural Science Foundation of China (No. 12071473). 
The third author was in part supported by National Natural Science Foundation of  China (Grant No. 12271232) and Shandong Jianzhu University Foundation (Grant No. X20075Z0101).
The fourth author was partly supported by National Natural Science Foundation of China (Nos. 12071052 and 12271501)}}
	
	\date{}
	\author{Xi Cen\footnote{Corresponding author, E-mail: xicenmath@gmail.com}, Qianjun He, Xiang Li and Dunyan Yan}
	\maketitle

\begin{center}
\begin{minipage}{13cm}
{\small {\bf Abstract:}\quad
This paper is devoted to studying the  boundedness of multilinear operartors and their commutators on generalized weighted Morrey spaces, which includes multilinear fractional maximal operator and multilinear fractional integral operator.  Moreover, we show that two different characterizations for the boundedness of multilinear fractional maximal operators and their commutators on generalized weighted Morrey spaces under different conditions. As some inportant applications, we give the boundedness of multilinear fractional integral operator on generalized weighted Besov-Morrey spaces and also obtain two embedding theorems as well as apriori estimates for the sub-Laplacian $\mathcal L$.}
\end{minipage}
\end{center}
\section{Introduction}\label{sec1}
Many classical (sub)linear operators in harmonic analysis have their multilinear counterparts which go far beyond the linear cases. The pioneer work on the theory of multilinear operators, initiated by Coifman and Meyer \cite{Coifman1975On,Coifman1978commutators}, multilinear theory attracted much attention, including Christ--Journ\'{e} \cite{Christ1987Polynomial}, Kenig--Stein \cite{Kenig1999Multilinear}, Grafakos--Torrers \cite{Grafakos2002Multilinear,Grafakos2002Maximal} and Demeter--Tao--Thiele \cite{Demeter2008Maximal}. As a multilinearization of  Riesz potential, Kenig and Stein \cite{Kenig1999Multilinear} introduced the following multilinear fractional operators
$$\mathcal{I}_{\alpha,m}(\vec{f})(x):=\int_{(\mathbb{R}^{n})^{m}}\frac{f_{1}(y_{1})\cdots f_{m}(y_{m})}{(|x-y_{1}|+\cdots+|x-y_{m}|)^{mn-\alpha}}d{\vec y},$$
where $\vec{f}=(f_{1},\ldots,f_{m})$ and $0<\alpha<mn$ for  $x\in \mathbb{R}^{n}$ and $m\in\mathbb{N}$. And the authors in \cite{Kenig1999Multilinear} established the bounds on product of Lebesgue spaces. Moen \cite{Moen} studied weighted inequalities for multilinear fractional integral operators, which gives a significant boost to the work for multilinear fractional maximal and integral operators, where the  multilinear fractional maximal operator was defined  by  for 
\begin{equation*}
{{\mathcal M}_\alpha }(\vec f)(x) := \mathop {\sup }\limits_{r > 0} \frac{1}{|B(x,r)|^{m-\frac{\alpha}{n}}}\int_{B(x,r)}\cdots\int_{B(x,r)} | {{f_1}({y_1})}\cdots {{f_m}({y_m})} |d{\vec y},
\end{equation*}
Recently, bounds for multilinear operators on some function spaces have been investigated by many authors. Chen and Xue \cite{Chen-Xue} studied weighted estimates for a class of
multilinear fractional type operators. Burenkov et al. \cite{Gu9} proved the boundedness of the fractional maximal operator in local Morrey-type spaces. Chen and Wu \cite{chen-wu} considered the multiple weighted estimates for commutators of multilinear fractional integral operators. Xue \cite{xue7} studied thg weighted estimates for the iterated commutators of multilinear maximal and fractional type operators. Hu, Li and Wang \cite{Hu-Li-Wang,Hu-Wang} proved the boundedness of multilinear singular integral and fractional integral operators on generalized weighted Morrey spaces. Guliyev \cite{Gu7,Gu8} characterized the boundedness of the fractional maximal and integral operators and their commutators in generalized weighted Morrey spaces on Carnot groups.

Let $b$ be a locally integrable function on $\mathbb{R}^{n}$ and let $T$ be an integral operator. Then we recall the commutator operator defined for a proper function $f$ as
$$T_{b}(f)=bT(f)-T(bf).$$
In the literature, $b$ is also called the symbol function of $T_{b}$.  The pioneer work on $T_{b}$ when $T$ belongs to a class of nonconvolution operators and $b\in {\rm BMO}(\mathbb{R}^{n})$, initiated by Coifman, Rochberg and Weiss \cite{Coifman1976Factorization}, the well-known result of which is a new characterization of ${\rm BMO}(\mathbb{R}^{n})$ space via the boundedness of $T_{b}$. For any ball $B\subset \mathbb{R}^{n}$, ${\rm BMO}(\mathbb{R}^{n})$ is the mean oscallation function space defined via the norm
$$\|b\|_{{\rm BMO}(\mathbb{R}^{n})}:=\sup_{B}\frac{1}{|B|}\int_{B}|b(x)-b_{B}|dx\quad\text{with}\quad b_{B}=\frac{1}{|B|}\int_{B}b(x)dx.$$
The theory of commutators plays an important role in the study of partial differential equations(PDEs).  The well-posedness of
solutions to many elliptic PDEs can be attributed to the corresponding commutator's boundedness for integral operators, which is the topic of this paper. Let $\mathcal{T}$ be a $m$-sublinear operator and  $\vec{b}=(b_1,\ldots,b_m)$ be $m$-folds locally integrable functions, the $m$-sublinear commutator generated by  $\mathcal{T}$ and $\vec{b}$ is defined by
\begin{equation*}
{\mathcal{T}_{\Sigma \vec b}}({f_1}, \ldots ,{f_m})(x) =\sum\limits_{j = 1}^m {\mathcal{T}_{\vec b}^j} (\vec f)(x): = \sum\limits_{j = 1}^m {\mathcal{T}({f_1}, \ldots ,({b_j}(x) - {b_j}){f_j}, \ldots {f_m})(x)},
\end{equation*}
the iterated commutator generated by  $\mathcal{T}$ and $\vec{b}$ is defined by
\begin{equation*}
{\mathcal{T}_{\Pi \vec b}}(\vec f)(x) = \mathcal{T}(({b_1}(x) - {b_1}){f_1}, \ldots ,({b_m}(x) - {b_m}){f_m})(x).
\end{equation*}
In recent decades, many authors have proved the boundedness of the commutators with $\rm{BMO}$ functions of multilinear fractional maximal operator in $\rn$. In this paper, the authors consider to extend the well-known results on generalized weighted Morrey spaces. 

The classical Morrey spaces $L^{p,\lambda}$ were first introduced by Morrey in \cite{Morrey} to study the local behavior of solutions to second order elliptic partial differential equations. After that, the study the boundedness of some classical  operators has attracted much more attention. For example,  Lu, Yang and Zhou \cite{LuYangZhou} studied the sublinear operators with rough kernel on generalized Morrey spaces. Komori and Shirai \cite{Komori} considered the weighted version of Morrey spaces $L^{p,\kappa}(\omega)$ and studied the boundedness of some classical operators such as the Hardy-Littlewood maximal operator and the Calder\'on-Zygmund operator on these spaces. They This greatly promotes the mathematical workers to study the Morrey spaces and singular integral operators.

Since then, Guliyev \cite{Gu1} proved boundedness of higher order commutators of sublinear operators on generalized weighted Morrey spaces. Ismayilova \cite{Ismayilova} studied Calder\'{o}n-Zygmund operators with kernels of Dini's type and their multilinear commutators on generalized Morrey spaces. Lin and Yan \cite{Lin-Yan} considered the multilinear strongly singular Calder\'{o}n-Zygmund operators and commutators on Morrey type spaces. Guliyev \cite{Gu3} proved the boundedness of multilinear Calder\'{o}n-Zygmund operators with kernels of Dini's type and their commutators on generalized local Morrey spaces. He, Wu and Yan \cite{He} consider the bounds for multilinear operators under an integral type condition on Morrey space. Guliyev \cite{Gu2} obtained the boundedness of commutators of multilinear Calder\'{o}n-Zygmund operators with kernels of Dini's type on generalized weighted Morrey spaces and applications. Cen \cite{cen1} proved boundedness of multilinear Littlewood-Paley square operators and their commutators on weighted Morrey spaces.

Let us recall the following definitions of generalized weighted Morrey spaces and generalized local weigthed Morrey spaces. 

Let $1 \le p<\infty$, $\omega$ be a weight function on $\mathbb R^n$, $\varphi$ be a positive measurable function on $\rn \times \left( {0,\infty } \right)$. The generalized weighted Morrey spaces are defined by
\begin{equation*}
{M^{p,\varphi }}(\omega ) = \{ f:{\left\| f \right\|_{{M^{p,\varphi }}(\omega )}}: = \mathop {\sup }\limits_{x \in \rn,r > 0} \varphi {(x,r)^{ - 1}}\omega {(B(x,r))^{ - \frac{1}{p}}}{\left\| f \right\|_{{L^p}(B(x,r),\omega dx)}} < \infty\}.
\end{equation*}
The weak generalized weighted Morrey spaces are defined by
\begin{equation*}
W{M^{p,\varphi }}(\omega ) = \{ f:{\left\| f \right\|_{{M^{p,\varphi }}(\omega )}}: = \mathop {\sup }\limits_{x \in \rn,r > 0} \varphi {(x,r)^{ - 1}}\omega {(B(x,r))^{ - \frac{1}{p}}}{\left\| f \right\|_{W{L^p}(B(x,r),\omega dx)}}< \infty\}.
\end{equation*}
It is easy to check that the following facts:
\begin{enumerate}
	\item[(i)] If $\omega \equiv 1$, then $M^{p,\varphi}(1)=M^{p,\varphi}$ are the generalized Morrey spaces and $WM^{p,\varphi}(1)=WM^{p,\varphi}$ are the weak generalized Morrey spaces;
	\item [(ii)] If $\varphi(x,r) \equiv \omega(B(x,r))^{\frac{\kappa-1}{p}}$, then $M^{p,\varphi}(\omega)=L^{p,\kappa}(\omega)$ is the weighted Morrey spaces;
	\item [(iii)] If  $\varphi(x,r) \equiv v(B(x,r))^{\frac{\kappa}{p}} \omega(B(x,r))^{-\frac{1}{p}}$, then
	$M^{p,\varphi}_{}(\omega)=L^{p,\kappa}_{}(v,\omega)$ is the two weighted Morrey spaces;
	\item [(iv)] If $w\equiv1$ and $\varphi(x,r)=r^{\frac{\lambda-n}{p}}$ with $0<\lambda<n$, then $M^{p,\varphi}(\omega)=L^{p,\lambda}$ is the Morrey spaces and $WM^{p,\varphi}(\omega)=WL^{p,\lambda}$ is the weak Morrey spaces;
	\item [(v)] If   $\varphi(x,r) \equiv \omega(B(x,r))^{-\frac{1}{p}}$, then $M^{p,\varphi}(\omega)=L^{p}(\omega)$ is the weighted Lebesgue spaces.
\end{enumerate}
In \cite{cen2}, the authors consider sufficient conditions for multi-sublinear operators on three kinds of generalized weighted Morrey spaces and show that these conditions can guarantee boundedness of multi-sublinear operators. Inspired by it, we will consider the boundedness of multilinear fractional maximal and integral operators and find the required parameter conditions. We give the characterize the boundedness of the above operators and their commutators on generalized weighted Morrey spaces.

Now, we can formulate the first main results of this paper as:
\begin{center}
	\textbf{1: Multilinear fractional maximal operators}
\end{center}

\begin{thm}\label{FRM}
	Let $m\geq 2$, $1\leq p_i<\infty$, $i=1,2,\ldots,m$ with $\frac{1}{p} = \sum\limits_{i = 1}^m {\frac{1}{{{p_i}}}}$, $\frac{{{\alpha _i}}}{n}{\rm{ = }}\frac{1}{{{p_i}}}{\rm{ - }}\frac{1}{{{q_i}}} \in (0,1)$, $\alpha  = \sum\limits_{i = 1}^m {{\alpha _i}}$, $\frac{1}{q} = \sum\limits_{i = 1}^m {\frac{1}{{{q_i}}}}$, $\vec{\omega} \in {A_{\vec P,q}}$ with ${\omega _i}^{{q_i}} \in {A_\infty }$, $i = 1, \ldots ,m$, and a group of non-negative measurable functions $({{\vec \varphi }_1},{\varphi _2}) = ({\varphi _{11}}, \ldots ,{\varphi _{1m}},{\varphi _2})$ satisfies the condition:
	\begin{equation}\label{con5}
	{\left[ {{{\vec \varphi }_1},{\varphi _2}} \right]_A}: = \mathop {\sup }\limits_{x \in {\rn},r > 0} {\varphi _2}{(x,r)^{{\rm{ - }}1}}\mathop {\sup }\limits_{t > r} \frac{{\mathop {{\rm{essinf}}}\limits_{t < \eta  < \infty } \prod\limits_{i = 1}^m {{\varphi _{1i}}(x,\eta ){{\left\| {{\omega _i}} \right\|}_{{L^{{p_i}}}(B(x,\eta ))}}} }}{{\prod\limits_{i = 1}^m {{{\left\| {{\omega _i}} \right\|}_{{L^{{q_i}}}(B(x,t))}}} }} < \infty.
	\end{equation}
	
	\begin{enumerate}[(i)]
		\item If $\mathop {\min }\limits_{1 \le k \le m} \{ {p_k}\}  > 1$, then $\mathcal{M}_\alpha$ is bounded from ${M^{{p_1},{\varphi _{11}}}}({\omega _1}^{p_1}) \times  \cdots  \times {M^{{p_m},{\varphi _{1m}}}}({\omega _m}^{p_m})$ to ${M^{q,{\varphi _2}}}({u_{\vec \omega }}^q)$, i.e., $\mathcal{M}_\alpha \in B\left( {\prod\limits_{i = 1}^m {{M^{{p_i},{\varphi _{1i}}}}\left( {{\omega _i}^{p_i}} \right)}  \to {M^{q,{\varphi _2}}}\left( {{u_{\vec \omega }}^q} \right)} \right)$.
		\item If $\mathop {\min }\limits_{1 \le k \le m} \{ {p_k}\}  = 1$, then $\mathcal{M}_\alpha$ is bounded from ${M^{{p_1},{\varphi _{11}}}}({\omega _1}^{p_1}) \times  \cdots  \times {M^{{p_m},{\varphi _{1m}}}}({\omega _m}^{p_m})$ to ${WM^{q,{\varphi _2}}}({u_{\vec \omega }}^q)$, i.e., $\mathcal{M}_\alpha \in B\left( {\prod\limits_{i = 1}^m {{M^{{p_i},{\varphi _{1i}}}}\left( {{\omega _i}^{p_i}} \right)}  \to {WM^{q,{\varphi _2}}}\left( {{u_{\vec \omega }}^q} \right)} \right)$.
	\end{enumerate}
\end{thm}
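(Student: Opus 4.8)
The plan is a two-step scheme typical for generalized Morrey estimates: first an estimate on a single arbitrary ball, then a passage to the Morrey norm in which condition~\eqref{con5} absorbs all the weight data. Throughout put $B_0=B(x_0,r)$, $2B_0=B(x_0,2r)$ and $u_{\vec\omega}=\prod_{i=1}^m\omega_i$. The core is the \emph{key local estimate}: for every ball $B_0$,
\begin{equation}\label{plan-key}
\left\|\mathcal{M}_\alpha(\vec f)\right\|_{L^q(B_0,\,u_{\vec\omega}^q)}\lesssim\left\|u_{\vec\omega}\right\|_{L^q(B_0)}\,\sup_{t>r}\frac{\prod_{i=1}^m\|f_i\|_{L^{p_i}(B(x_0,t),\,\omega_i^{p_i})}}{\prod_{i=1}^m\|\omega_i\|_{L^{q_i}(B(x_0,t))}}.
\end{equation}
To prove \eqref{plan-key} I would split $f_i=f_i^0+f_i^\infty$ with $f_i^0=f_i\chi_{2B_0}$ and, using the sublinearity and monotonicity of $\mathcal{M}_\alpha$, dominate $\mathcal{M}_\alpha(\vec f)$ by the $2^m$ terms $\mathcal{M}_\alpha(f_1^{\sigma_1},\dots,f_m^{\sigma_m})$ with $\sigma\in\{0,\infty\}^m$. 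For the purely local term $\sigma=(0,\dots,0)$ I would invoke the weighted $L^{p_1}\times\cdots\times L^{p_m}\to L^q$ boundedness of $\mathcal{M}_\alpha$ for $\vec\omega\in A_{\vec P,q}$ (Moen, \cite{Moen}), obtaining $\|\mathcal{M}_\alpha(\vec{f^0})\|_{L^q(u_{\vec\omega}^q)}\lesssim\prod_i\|f_i\|_{L^{p_i}(2B_0,\omega_i^{p_i})}$; taking $t=2r$ in \eqref{plan-key} and using the comparability $\prod_i\|\omega_i\|_{L^{q_i}(B)}\approx\|u_{\vec\omega}\|_{L^q(B)}$ — which is exactly where the hypotheses $\omega_i^{q_i}\in A_\infty$ are used — recovers the right-hand side.

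For each mixed or global term (at least one $\sigma_j=\infty$) the point is a pointwise estimate uniform in $y\in B_0$. Since any ball $B(y,s)$ meeting $\operatorname{supp}f_j^\infty\subset(2B_0)^c$ must have $s>r$, whence $B(y,s)\subset B(x_0,2s)$, and since $m-\tfrac\alpha n=\sum_{i=1}^m(1-\tfrac{\alpha_i}{n})$, one gets
\begin{equation*}
\mathcal{M}_\alpha(f_1^{\sigma_1},\dots,f_m^{\sigma_m})(y)\lesssim\sup_{t>2r}\prod_{i=1}^m\frac{1}{|B(x_0,t)|^{1-\alpha_i/n}}\int_{B(x_0,t)}|f_i|.
\end{equation*}
I would then apply H\"older with exponents $(p_i,p_i')$ in each factor and collect the weight factors through the defining $A_{\vec P,q}$ inequality: after checking that all powers of $|B(x_0,t)|$ cancel by the relations $\tfrac{\alpha_i}{n}=\tfrac1{p_i}-\tfrac1{q_i}$, the product $\prod_i\|\omega_i^{-1}\|_{L^{p_i'}(B)}\,|B|^{-(1-\alpha_i/n)}$ is controlled by $[\vec\omega]_{A_{\vec P,q}}\|u_{\vec\omega}\|_{L^q(B)}^{-1}$. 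This bounds the mixed term by a constant times $\sup_{t>2r}\prod_i\|f_i\|_{L^{p_i}(B(x_0,t),\omega_i^{p_i})}/\|u_{\vec\omega}\|_{L^q(B(x_0,t))}$; being independent of $y$, integrating $u_{\vec\omega}^q$ over $B_0$ contributes the factor $\|u_{\vec\omega}\|_{L^q(B_0)}$, and a second use of the $A_\infty$ comparability yields \eqref{plan-key}.

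To finish part (i) I would divide \eqref{plan-key} by $\varphi_2(x_0,r)\|u_{\vec\omega}\|_{L^q(B_0)}$ and take the supremum over $x_0,r$, so that the factors $\|u_{\vec\omega}\|_{L^q(B_0)}$ cancel. The numerator is then refined by noting that for every $\eta>t$ the inclusion $B(x_0,t)\subset B(x_0,\eta)$ together with the definition of $\|f_i\|_{M^{p_i,\varphi_{1i}}(\omega_i^{p_i})}$ gives $\|f_i\|_{L^{p_i}(B(x_0,t),\omega_i^{p_i})}\le\varphi_{1i}(x_0,\eta)\|\omega_i\|_{L^{p_i}(B(x_0,\eta))}\|f_i\|_{M^{p_i,\varphi_{1i}}(\omega_i^{p_i})}$, which lets me insert $\operatorname*{essinf}_{\eta>t}$ in the numerator; what survives is exactly $[\vec\varphi_1,\varphi_2]_A\prod_i\|f_i\|_{M^{p_i,\varphi_{1i}}(\omega_i^{p_i})}$, finite by \eqref{con5}. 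For part (ii), when $\min_k p_k=1$, only the purely local term changes: Moen's strong bound is replaced by the corresponding weak endpoint bound for $\mathcal{M}_\alpha$, so that piece lands in $WL^q$ and hence in $WM^{q,\varphi_2}(u_{\vec\omega}^q)$, while the mixed terms satisfy the same pointwise bound (which controls the weak norm for free); the rest of the argument then carries over with $L^q$ replaced by $WL^q$ on the target side.

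The step I expect to be the main obstacle is the weight bookkeeping in the global part: establishing the uniform-in-$y$ pointwise bound for the mixed terms and then verifying that the product of local factors collapses to $\|u_{\vec\omega}\|_{L^q(B)}^{-1}$ through the \emph{full} multilinear condition $\vec\omega\in A_{\vec P,q}$ (rather than through individual $A_{p_i,q_i}$ conditions, which need not hold), together with the two applications of the reverse-H\"older comparability $\prod_i\|\omega_i\|_{L^{q_i}(B)}\approx\|u_{\vec\omega}\|_{L^q(B)}$ furnished by $\omega_i^{q_i}\in A_\infty$.
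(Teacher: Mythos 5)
Your proposal is correct and follows essentially the same route as the paper's proof: the same splitting $f_i=f_i^0+f_i^{\infty}$ with $f_i^0=f_i\chi_{2B}$, Moen's weighted bounds (Lemmas \ref{LB1}--\ref{LB2}) for the purely local term combined with $A_\infty$ doubling and the comparability $\prod_{i=1}^m\|\omega_i\|_{L^{q_i}(B)}\approx\|u_{\vec\omega}\|_{L^q(B)}$ of Lemma \ref{cen4}, and the same uniform pointwise supremum estimate plus H\"older and the $A_{\vec P,q}$ condition (with the exponent cancellation via $\tfrac{\alpha_i}{n}=\tfrac1{p_i}-\tfrac1{q_i}$) for the terms with some $\sigma_j=\infty$. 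The only cosmetic difference is the final passage to the Morrey norm: you insert the $\mathop{\mathrm{essinf}}_{\eta>t}$ directly by monotonicity of $t\mapsto\|f_i\|_{L^{p_i}(B(x_0,t),\omega_i^{p_i})}$, whereas the paper invokes the supremal-operator Lemma \ref{Gu8} together with condition \eqref{con5} — your direct argument is precisely the sufficiency direction of that lemma, so the two are the same mechanism.
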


Next, the following boundedness results for the two commutators of multilinear fractional maximal operators ${\mathcal{M}_{\alpha, \prod \vec b }}$ and ${{\mathcal M}_{\alpha ,\sum {\vec b} }}$ on $\prod\limits_{i = 1}^m {{M^{{p_i},{\varphi _{1i}}}}\left( {{\omega _i}^{p_i}} \right)}$ are also valid.

\begin{thm}\label{CFRM}
	Let $m\geq 2$, $1< p_i<\infty$, $i=1,2,\ldots,m$ with $\frac{1}{p} = \sum\limits_{i = 1}^m {\frac{1}{{{p_i}}}}$, $\frac{{{\alpha _i}}}{n}{\rm{ = }}\frac{1}{{{p_i}}}{\rm{ - }}\frac{1}{{{q_i}}} \in (0,1)$, $\alpha  = \sum\limits_{i = 1}^m {{\alpha _i}}$, $\frac{1}{q} = \sum\limits_{i = 1}^m {\frac{1}{{{q_i}}}}$, $\vec{\omega}\in {A_{\vec P,q}}$ with ${\omega _i}^{{q_i}} \in {A_\infty }$, $i = 1, \ldots ,m$, and a group of non-negative measurable functions $({{\vec \varphi }_1},{\varphi _2})$ satisfies the condition:
	\begin{equation}\label{con6}
	{\left[ {{{\vec \varphi }_1},{\varphi _2}} \right]_A}^\prime : = \mathop {\sup }\limits_{x \in {\rn},r > 0} {\varphi _2}{(x,r)^{{\rm{ - }}1}}\mathop {\sup }\limits_{t > r} {\left( {1 + \log \frac{t}{r}} \right)^m}\frac{{\mathop {{\rm{essinf}}}\limits_{t < \eta  < \infty } \prod\limits_{i = 1}^m {{\varphi _{1i}}(x,\eta ){{\left\| {{\omega _i}} \right\|}_{{L^{{p_i}}}(B(x,\eta ))}}} }}{{\prod\limits_{i = 1}^m {{{\left\| {{\omega _i}} \right\|}_{{L^{{q_i}}}(B(x,t))}}} }} < \infty.
	\end{equation}
	If $\vec b \in {\left( {\rm BMO} \right)^m}$, then ${\mathcal{M}_{\alpha, \prod \vec b }}$ is bounded from ${M^{{p_1},{\varphi _{11}}}}(\omega _1^{{p_1}}) \times  \cdots  \times {M^{{p_m},{\varphi _{1m}}}}(\omega _m^{{p_m}})$ to ${M^{q,{\varphi _2}}}({u_{\vec \omega }}^q)$. Moreover, we have 
	$${\left\| {\mathcal{M}_{\alpha, \prod \vec b }} \right\|_{\prod\limits_{i = 1}^m {{M^{{p_i},{\varphi _{1i}}}}\left( {{\omega _i}^{{p_i}}} \right)}  \to {M^{q,{\varphi _2}}}\left( {{u_{\vec \omega }}^q} \right)}} \lesssim \prod\limits_{i = 1}^m {{{\left\| {{b_i}} \right\|}_{\rm BMO}}}$$.
\end{thm}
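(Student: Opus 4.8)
The plan is to adopt the two-step strategy that is standard for operators of Hardy--Littlewood type on generalized weighted Morrey spaces: reduce the theorem to a single \emph{local estimate} on an arbitrary fixed ball, and then convert that estimate into the Morrey-norm bound by invoking condition \eqref{con6}. Write $u_{\vec\omega}=\prod_{i=1}^{m}\omega_i$ and recall that the iterated maximal commutator acts as
\begin{equation*}
\mathcal{M}_{\alpha,\prod\vec b}(\vec f)(x)=\sup_{s>0}\frac{1}{|B(x,s)|^{m-\frac{\alpha}{n}}}\int_{B(x,s)}\!\!\cdots\!\!\int_{B(x,s)}\prod_{i=1}^{m}|b_i(x)-b_i(y_i)|\,|f_i(y_i)|\,d\vec y .
\end{equation*}
Since $1<p_i<\infty$ for every $i$, we are entirely in the strong-type range. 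The heart of the proof is the local estimate: for every ball $B_0=B(x_0,r)$,
\begin{equation*}
\big\|\mathcal{M}_{\alpha,\prod\vec b}(\vec f)\big\|_{L^{q}(B_0,\,u_{\vec\omega}^{q})}\lesssim\prod_{i=1}^{m}\|b_i\|_{\mathrm{BMO}}\cdot\|u_{\vec\omega}\|_{L^{q}(B_0)}\cdot\sup_{t>2r}\Big(1+\log\tfrac{t}{r}\Big)^{m}\frac{\prod_{i=1}^{m}\|f_i\omega_i\|_{L^{p_i}(B(x_0,t))}}{\prod_{i=1}^{m}\|\omega_i\|_{L^{q_i}(B(x_0,t))}} .
\end{equation*}

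To prove this local estimate I would fix $B_0$ and split each input as $f_i=f_i^{0}+f_i^{\infty}$ with $f_i^{0}=f_i\chi_{2B_0}$. Expanding the product $\prod_i(|f_i^{0}|+|f_i^{\infty}|)$ inside the defining integral and using that the supremum of a sum is dominated by the sum of suprema, I bound $\mathcal{M}_{\alpha,\prod\vec b}(\vec f)$ by the sum of the $2^{m}$ terms $\mathcal{M}_{\alpha,\prod\vec b}(f_1^{j_1},\dots,f_m^{j_m})$ with $j_i\in\{0,\infty\}$. For the purely local term $(0,\dots,0)$ I would invoke the known multiple-weight strong-type boundedness of the iterated commutator on product Lebesgue spaces (Moen/Chen--Wu/Xue type), namely $\mathcal{M}_{\alpha,\prod\vec b}\colon\prod_iL^{p_i}(\omega_i^{p_i})\to L^{q}(u_{\vec\omega}^{q})$ with operator norm $\lesssim\prod_i\|b_i\|_{\mathrm{BMO}}$, giving $\|\mathcal{M}_{\alpha,\prod\vec b}(\vec f^{\,0})\|_{L^{q}(u_{\vec\omega}^{q})}\lesssim\prod_i\|b_i\|_{\mathrm{BMO}}\|f_i\omega_i\|_{L^{p_i}(2B_0)}$; evaluating the right-hand supremum at $t=2r$ and using the elementary Hölder bound $\|u_{\vec\omega}\|_{L^{q}(B_0)}\le\prod_i\|\omega_i\|_{L^{q_i}(2B_0)}$ (valid because $\tfrac1q=\sum\tfrac1{q_i}$) shows this term fits the claimed form.

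The work lies in the mixed terms carrying at least one $\infty$, and this is the step I expect to be the main obstacle. Here, for $x\in B_0$ and each internal scale $s$, the region $B(x,s)$ must reach past $2B_0$, so I would chop $(2B_0)^{c}$ into dyadic annuli $2^{k+1}B_0\setminus2^{k}B_0$ and, on each annulus, write $b_i(x)-b_i(y_i)=\big(b_i(x)-(b_i)_{B(x_0,2^{k}r)}\big)-\big(b_i(y_i)-(b_i)_{B(x_0,2^{k}r)}\big)$. The telescoping of the averages $(b_i)_{B(x_0,2^{k}r)}$ against $(b_i)_{B_0}$ produces the factor $|(b_i)_{B(x_0,2^{k}r)}-(b_i)_{B_0}|\lesssim k\,\|b_i\|_{\mathrm{BMO}}\approx\|b_i\|_{\mathrm{BMO}}\log(2^{k}r/r)$, which is exactly the source of the $(1+\log\tfrac tr)^{m}$ weight when the $m$ factors are multiplied. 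Controlling the oscillation terms in the \emph{weighted} $L^{q}$ norm is what forces the hypothesis $\omega_i^{q_i}\in A_\infty$: it supplies the weighted John--Nirenberg inequality $\tfrac1{\sigma(B)}\int_B|b-b_B|^{r_0}\sigma\lesssim\|b\|_{\mathrm{BMO}}^{r_0}$ and a reverse-Hölder/doubling property, which together let me apply a generalized (weighted) Hölder inequality on each annulus and, crucially, sum the resulting series in $k$ — the geometric decay coming from $A_\infty$ together with the $A_{\vec P,q}$ balance of exponents beats the polynomial factor $k^{m}$, so the sum converges and is dominated by the single supremum over $t>2r$.

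Finally, for the passage to the Morrey norm I would multiply the local estimate by $\varphi_2(x_0,r)^{-1}\|u_{\vec\omega}\|_{L^{q}(B_0)}^{-1}$ (so that the weight factor cancels) and take the supremum over $x_0,r$. The only remaining point is to absorb the inputs' Morrey norms: since $\|f_i\omega_i\|_{L^{p_i}(B(x_0,t))}$ is nondecreasing in the radius and $\|f_i\omega_i\|_{L^{p_i}(B(x_0,\eta))}\le\|f_i\|_{M^{p_i,\varphi_{1i}}(\omega_i^{p_i})}\varphi_{1i}(x_0,\eta)\|\omega_i\|_{L^{p_i}(B(x_0,\eta))}$ for every $\eta>t$, taking the essential infimum over $\eta\in(t,\infty)$ on the right gives
\begin{equation*}
\prod_{i=1}^{m}\|f_i\omega_i\|_{L^{p_i}(B(x_0,t))}\le\prod_{i=1}^{m}\|f_i\|_{M^{p_i,\varphi_{1i}}(\omega_i^{p_i})}\cdot\mathop{\mathrm{essinf}}_{t<\eta<\infty}\prod_{i=1}^{m}\varphi_{1i}(x_0,\eta)\|\omega_i\|_{L^{p_i}(B(x_0,\eta))} .
\end{equation*}
Substituting this and comparing with \eqref{con6} (note $\sup_{t>2r}\le\sup_{t>r}$) yields precisely $\big\|\mathcal{M}_{\alpha,\prod\vec b}(\vec f)\big\|_{M^{q,\varphi_2}(u_{\vec\omega}^{q})}\lesssim[\vec\varphi_1,\varphi_2]_A'\prod_{i=1}^{m}\|b_i\|_{\mathrm{BMO}}\prod_{i=1}^{m}\|f_i\|_{M^{p_i,\varphi_{1i}}(\omega_i^{p_i})}$, which is the asserted bound.
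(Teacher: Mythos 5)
Your overall architecture matches the paper's: split $f_i=f_i^{0}+f_i^{\infty}$, handle the all-local term by the known weighted Lebesgue bound for $\mathcal{M}_{\alpha,\prod\vec b}$ (Lemma \ref{LB2}), extract $(1+\log\frac{t}{r})^{m}$ from BMO oscillations for the far terms, and feed the resulting $\sup_{t>r}$ expression into condition \eqref{con6}. But there is one genuine flaw: your absorption of the local term rests on ``the elementary H\"older bound $\|u_{\vec\omega}\|_{L^{q}(B_0)}\le\prod_{i=1}^m\|\omega_i\|_{L^{q_i}(2B_0)}$,'' which points the wrong way. To fit $\prod_{i=1}^m\|f_i\omega_i\|_{L^{p_i}(2B_0)}$ under your claimed local estimate evaluated at $t=2r$ you need the \emph{reverse} comparison $\prod_{i=1}^m\|\omega_i\|_{L^{q_i}(2B_0)}\lesssim\|u_{\vec\omega}\|_{L^{q}(B_0)}$, and H\"older only ever gives $\|u_{\vec\omega}\|_{L^{q}}\le\prod_i\|\omega_i\|_{L^{q_i}}$; for general weights the two sides are not comparable. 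This reverse inequality is exactly where the hypotheses $\omega_i^{q_i}\in A_\infty$ enter: Lemma \ref{cen4} yields $\prod_{i=1}^m\|\omega_i\|_{L^{q_i}(2B_0)}\approx\|u_{\vec\omega}\|_{L^{q}(2B_0)}$ (using also $u_{\vec\omega}^{q}\in A_{mq}\subseteq A_\infty$ from Lemma \ref{lem4}), and the doubling property of Lemma \ref{Gra1.1} then gives $\|u_{\vec\omega}\|_{L^{q}(2B_0)}\lesssim\|u_{\vec\omega}\|_{L^{q}(B_0)}$; this is precisely the chain the paper runs in its estimate \eqref{ieq4} of the $\vec\beta=0$ term. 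A related misattribution: the weighted John--Nirenberg bounds for the oscillation factors on the $y_i$-side are taken in $L^{p_i'}(B(x_0,t),\omega_i^{-p_i'})$, so they require $\omega_i^{-p_i'}\in A_\infty$, which comes from $\vec\omega\in A_{\vec P,q}$ via Lemma \ref{lem4}, not from $\omega_i^{q_i}\in A_\infty$; the latter (together with $u_{\vec\omega}^{q}\in A_\infty$) is what powers Lemma \ref{cen4} and, via Lemma \ref{Gu1}, the $b_i(z)$-factors measured in $L^{q}(B,u_{\vec\omega}^{q})$.

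Two of your deviations from the paper are legitimate but worth noting. For the far terms the paper does \emph{not} decompose into dyadic annuli with telescoping averages: since for $z\in B$ the ball $B(z,t)$ can meet $B(x_0,2r)^{c}$ only when $t>r$, and then $B(z,t)\cap B(x_0,2r)^{c}\subseteq B(x_0,2t)$, the maximal function of the far part is dominated outright by a single $\sup_{t>r}$ of averages over $B(x_0,t)$, and the logarithms come from Lemma \ref{Gu1} applied with the fixed constants $\mu_i=(b_i)_B$; the annulus-plus-summation device you describe is what the paper reserves for the integral operator $\mathcal{I}_{\alpha,m}$ (cf.\ \eqref{ieq6}), and while your version can be made to converge, the geometric decay there is the elementary scaling $(2^{k}r/s)^{n-\alpha_i}$ rather than anything supplied by $A_\infty$. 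Finally, your direct passage to the Morrey norm (monotonicity of $t\mapsto\|f_i\omega_i\|_{L^{p_i}(B(x_0,t))}$, the essential infimum over $\eta>t$, then \eqref{con6}) is a sound, self-contained substitute for the paper's appeal to the supremal-operator Lemma \ref{Gu8}: it is exactly the sufficiency half of that lemma. So the proof is salvageable, but the local-term step must be repaired with Lemmas \ref{cen4}, \ref{lem4} and \ref{Gra1.1} as indicated.
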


\begin{thm}\label{CFRM2}
	Let $m\geq 2$, $1< p_i<\infty$, $i=1,2,\ldots,m$ with $\frac{1}{p} = \sum\limits_{i = 1}^m {\frac{1}{{{p_i}}}}$,  $\frac{{{\alpha _i}}}{n}{\rm{ = }}\frac{1}{{{p_i}}}{\rm{ - }}\frac{1}{{{q_i}}} \in (0,1)$, $\alpha  = \sum\limits_{i = 1}^m {{\alpha _i}}$, $\frac{1}{q} = \sum\limits_{i = 1}^m {\frac{1}{{{q_i}}}}$, $\vec{\omega}\in {A_{\vec P,q}}$ with ${\omega _i}^{{q_i}} \in {A_\infty }$, $i = 1, \ldots ,m$, and a group of non-negative measurable functions $({{\vec \varphi }_1},{\varphi _2})$ satisfies the condition:
	\begin{equation}\label{con7}
	{\left[ {{{\vec \varphi }_1},{\varphi _2}} \right]_A}^{\prime \prime }: = \mathop {\sup }\limits_{x \in {\rn},r > 0} {\varphi _2}{(x,r)^{{\rm{ - }}1}}\mathop {\sup }\limits_{t > r} \left( {1 + \log \frac{t}{r}} \right)\frac{{\mathop {{\rm{essinf}}}\limits_{t < \eta  < \infty } \prod\limits_{i = 1}^m {{\varphi _{1i}}(x,\eta ){{\left\| {{\omega _i}} \right\|}_{{L^{{p_i}}}(B(x,\eta ))}}} }}{{\prod\limits_{i = 1}^m {{{\left\| {{\omega _i}} \right\|}_{{L^{{q_i}}}(B(x,t))}}} }} < \infty.
	\end{equation}
	If $\vec b \in {\left( {\rm BMO} \right)^m}$, then ${M_{_{\alpha ,\sum {\vec b} }}^j}$ is bounded from ${M^{{p_1},{\varphi _{11}}}}(\omega _1^{{p_1}}) \times  \cdots  \times {M^{{p_m},{\varphi _{1m}}}}(\omega _m^{{p_m}})$ to ${M^{q,{\varphi _2}}}({u_{\vec \omega }}^q)$. Moreover, we have 
	$${\left\| {M_{_{\alpha ,\sum {\vec b} }}^j} \right\|_{\prod\limits_{i = 1}^m {{M^{{p_i},{\varphi _{1i}}}}\left( {{\omega _i}^{{p_i}}} \right)}  \to {M^{q,{\varphi _2}}}\left( {{u_{\vec \omega }}^q} \right)}} \lesssim {\left\| {{b_j}} \right\|_{\rm BMO}},$$ and 
	$${\left\| {{M_{\alpha ,\sum {\vec b} }}} \right\|_{\prod\limits_{i = 1}^m {{M^{{p_i},{\varphi _{1i}}}}\left( {{\omega _i}^{{p_i}}} \right)}  \to {M^{q,{\varphi _2}}}\left( {{u_{\vec \omega }}^q} \right)}} \lesssim \mathop {\max }\limits_{1 \le i \le m} {\left\| {{b_i}} \right\|_{\rm BMO}}.$$
\end{thm}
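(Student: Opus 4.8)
The plan is to follow the Guliyev-type scheme already used for Theorems \ref{FRM} and \ref{CFRM}: reduce the Morrey estimate to a ball-wise (local) estimate and then couple it with condition \eqref{con7}. Since $M_{\alpha,\sum\vec b} = \sum_{j=1}^m M^j_{\alpha,\sum\vec b}$ and the second bound follows from the first by summing over $j$ and absorbing $\sum_{j}\|b_j\|_{\rm BMO} \lesssim \max_{1\le i\le m}\|b_i\|_{\rm BMO}$, it suffices to treat one term $M^j_{\alpha,\sum\vec b}$. Fix $B=B(x_0,r)$. First I would split each $f_i = f_i^0 + f_i^\infty$ with $f_i^0 = f_i\chi_{2B}$ and $f_i^\infty = f_i\chi_{(2B)^c}$, and use the sublinearity of $\mathcal M_\alpha$ together with the pointwise decomposition $|b_j(x)-b_j(y_j)| \le |b_j(x)-(b_j)_B| + |(b_j)_B - b_j(y_j)|$ to bound $M^j_{\alpha,\sum\vec b}(\vec f)$ by a finite sum of terms of the form $|b_j(x)-(b_j)_B|\,\mathcal M_\alpha(\cdots)(x)$ and $\mathcal M_\alpha(\ldots,((b_j)_B - b_j)f_j,\ldots)(x)$, where each argument is either a local piece $f_i^0$ or a global piece $f_i^\infty$.

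For the purely local contribution (all arguments $f_i^0$) I would invoke the known weighted boundedness $M^j_{\alpha,\sum\vec b}\colon \prod_{i=1}^m L^{p_i}(\omega_i^{p_i}) \to L^q(u_{\vec\omega}^q)$ with operator norm controlled by $\|b_j\|_{\rm BMO}$, which is available because $\vec\omega \in A_{\vec P,q}$. This yields $\|M^j_{\alpha,\sum\vec b}(f_1^0,\dots,f_m^0)\|_{L^q(B,u_{\vec\omega}^q)} \lesssim \|b_j\|_{\rm BMO}\prod_{i=1}^m \|f_i\|_{L^{p_i}(2B,\omega_i^{p_i})}$. This is then matched against \eqref{con7} by the standard domination $\|f_i\|_{L^{p_i}(B(x_0,t),\omega_i^{p_i})} \le \varphi_{1i}(x_0,t)\,\|\omega_i\|_{L^{p_i}(B(x_0,t))}\,\|f_i\|_{M^{p_i,\varphi_{1i}}(\omega_i^{p_i})}$, followed by an essential-infimum manipulation in the dilation parameter $\eta$ so that the product of single-function bounds is compared with the essential infimum of the product in \eqref{con7}; this bookkeeping is already carried out in the proof of Theorem \ref{FRM} and can be reused verbatim.

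The global terms (at least one $f_i^\infty$ present) are where the single logarithmic factor is produced. For $x\in B$ the supremum defining $\mathcal M_\alpha$ on the far pieces is estimated by decomposing $(2B)^c = \bigcup_{k\ge 1}\big(2^{k+1}B\setminus 2^k B\big)$; on each annulus Hölder's inequality in the exponents $p_i$ together with the $A_{\vec P,q}$ and $A_\infty$ conditions controls the weighted averages, while the symbol is handled through $|(b_j)_{B(x_0,t)} - (b_j)_B| \lesssim \|b_j\|_{\rm BMO}\big(1 + \log\tfrac{t}{r}\big)$ for $t>r$. Summing the resulting series over $k$, the $A_\infty$/doubling decay of the weights forces geometric convergence and the sum is dominated by the supremum over $t>r$ carrying exactly one power of $\big(1+\log\tfrac{t}{r}\big)$, which is precisely why the hypothesis here needs only the first power of the logarithm rather than the $m$-th power required in Theorem \ref{CFRM}. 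Collecting the local and global estimates gives the ball-wise bound; dividing by $\varphi_2(x_0,r)\,u_{\vec\omega}^q(B)^{1/q}$, taking the supremum over $x_0$ and $r$, and applying \eqref{con7} yields $\|M^j_{\alpha,\sum\vec b}\|\lesssim \|b_j\|_{\rm BMO}$, and summation over $j$ gives the stated bound for $M_{\alpha,\sum\vec b}$.

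The main obstacle I anticipate is the global part: one must simultaneously manage the slow decay of the fractional kernel across the dyadic annuli and the unbounded growth of the BMO symbol, tracking the weights through Hölder's inequality and $A_{\vec P,q}$ so that precisely one logarithmic factor survives and the series converges to the supremum in \eqref{con7}. The secondary delicate point is ensuring the essential-infimum-over-dilations argument is compatible with the product structure of the $m$ functions; since this already appears in the proof of Theorem \ref{FRM}, I expect it to transfer without difficulty, leaving the annular BMO estimate as the genuinely new ingredient.
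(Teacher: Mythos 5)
Your overall scheme --- splitting $f_i=f_i^0+f_i^\infty$, the two-term BMO decomposition of the single symbol $b_j$, Lemma \ref{LB2} for the purely local piece, and summing over $j$ at the end to get the $\max_i\|b_i\|_{\rm BMO}$ bound --- is the same scheme the paper uses (it proves Theorem \ref{CFRM} explicitly and treats Theorem \ref{CFRM2} as the analogous one-symbol case). The genuine gap is in your global estimate. You decompose $(2B)^c$ into dyadic annuli $2^{k+1}B\setminus 2^kB$, sum the resulting series, and assert that ``$A_\infty$/doubling decay of the weights forces geometric convergence'' so that the sum is dominated by $\sup_{t>r}\bigl(1+\log\frac{t}{r}\bigr)(\cdots)$. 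That step fails: the summands have the form $(1+k)\,a_k$ with $a_k=\prod_{i=1}^m\|f_i\|_{L^{p_i}(2^kB,\,\omega_i^{p_i})}\,\|\omega_i\|_{L^{q_i}(2^kB)}^{-1}$, which carry the arbitrary functions $f_i$, and no geometric decay in $k$ is available from the weights alone; a series $\sum_k(1+k)a_k$ is simply not controlled by $\sup_k(1+k)a_k$. The hypothesis \eqref{con7} is a sup-type (class $A$) condition and only makes that supremum finite. The annulus-plus-series mechanism is the fractional-\emph{integral} technique: carried out honestly it produces $\int_r^\infty\bigl(1+\log\frac{t}{r}\bigr)(\cdots)\frac{dt}{t}$ and hence requires the strictly stronger integral condition $\left[\vec\varphi_1,\varphi_2\right]_B^{\prime\prime}$ of Theorem \ref{CFRI2} --- this is precisely the content of the paper's remark that $\mathcal M_\alpha\lesssim\mathcal I_{\alpha,m}$ recovers the maximal results only under the $B$-type conditions. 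So as written your argument does not prove the theorem under \eqref{con7}.

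The repair (and the paper's actual route, visible in the proofs of Theorems \ref{FRM} and \ref{CFRM}) needs no annuli at all, because the operator is maximal: for $z\in B=B(x_0,r)$, if a ball $B(z,t)$ in the defining supremum meets the support of some $f_i^\infty$ then necessarily $t>r$, and then $B(z,t)\cap B(x_0,2r)^c\subseteq B(x_0,2t)$. Hence the far contribution is bounded pointwise by a \emph{single} supremum $\sup_{t>r}\prod_{i=1}^m|B(x_0,t)|^{-1+\frac{\alpha_i}{n}}\int_{B(x_0,t)}|f_i|\,dy_i$, into which exactly one factor $\bigl(1+\log\frac{t}{r}\bigr)$ is inserted via Lemma \ref{Gu1} for the term where $b_j$ sits on the integrated variable (only one symbol occurs in $\mathcal M_{\alpha,\sum\vec b}^j$, which is why the first power suffices here versus the $m$-th power in \eqref{con6}); the term $|b_j(z)-(b_j)_B|$ produces $\|b_j\|_{\rm BMO}\,u_{\vec\omega}^q(B)^{1/q}$ after taking $L^q(B,u_{\vec\omega}^q)$ norms. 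After H\"older's inequality, $\vec\omega\in A_{\vec P,q}$ and Lemma \ref{cen4}, the resulting supremum is matched against \eqref{con7} through the supremal-operator Lemma \ref{Gu8}, exactly as in \eqref{ieq1}; the local piece is handled as in \eqref{ieq2} with Lemma \ref{Gra1.1}. With the series argument replaced by this geometric reduction, the rest of your proposal (local estimate, BMO bookkeeping, summation over $j$ with the factor $m$ absorbed into $\lesssim$) goes through.
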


The above three theorems show sufficient conditions for the boundedness of $\mathcal{M}_\alpha$, ${\mathcal{M}_{\alpha, \prod \vec b }}$ and ${{\mathcal M}_{\alpha ,\sum {\vec b} }}$. Naturally, it will be a very interesting problem to ask whether  there necessary conditions for the boundedness of the above operators? Thus, in the following theorems, we give a positive answer.

For a weight $\omega$, we denote by $\mathcal{G}_\omega ^p$ the set of all almost decreasing function $\varphi :\rn \times (0,\infty ) \to (0,\infty )$ such that for any ball ${B_0} = B({x_0},{r_0}) \subseteq \rn$, we have
\begin{enumerate}
	\item[\emph{(1)}]
	$\varphi ({x_0},{r_0}) \lesssim \mathop {\inf }\limits_{x \in \rn,0 < r \le {r_0}} \varphi (x,r);$
	\item[\emph{(2)}]
	$\varphi ({x_0},{r_0}){\omega ^p}{({x_0},{r_0})^{\frac{1}{p}}} \lesssim \mathop {\inf }\limits_{x \in \rn,0 < r \le {r_0}} \varphi (x,r){\omega ^p}{(D(x,r))^{\frac{1}{p}}}.$
\end{enumerate}

\begin{thm}\label{SCFRM}
	Let $m\geq 2$, $1\leq p_i<\infty$, $i=1,2,\ldots,m$ with $\frac{1}{p} = \sum\limits_{i = 1}^m {\frac{1}{{{p_i}}}}$,  $\frac{{{\alpha _i}}}{n}{\rm{ = }}\frac{1}{{{p_i}}}{\rm{ - }}\frac{1}{{{q_i}}} \in (0,1)$, $\alpha  = \sum\limits_{i = 1}^m {{\alpha _i}}$, $\frac{1}{q} = \sum\limits_{i = 1}^m {\frac{1}{{{q_i}}}}$, $\vec{\omega} \in {A_{\vec P,q}}$ with ${\omega _i}^{{q_i}} \in {A_\infty }$, $i = 1, \ldots ,m$, ${\varphi _i} \in \mathcal{G}_\omega ^p$, $i = 1, \ldots ,m$, $({{\vec \varphi }_1},{\varphi _2})$ is a group of non-negative measurable functions and ${{\vec \varphi }_1}$ satisfies the condition:
	\begin{equation}\label{cen1}
	{\left[ {{{\vec \varphi }_1}} \right]_A}: = \mathop {\sup }\limits_{x \in {\rn},r > 0} {\left( {{r^\alpha }\prod\limits_{i = 1}^m {{\varphi _{1i}}(x,r)} } \right)^{{\rm{ - }}1}}\mathop {\sup }\limits_{t > r} \frac{{\mathop {{\rm{essinf}}}\limits_{t < \eta  < \infty } \prod\limits_{i = 1}^m {{\varphi _{1i}}(x,\eta ){{\left\| {{\omega _i}} \right\|}_{{L^{{p_i}}}(B(x,\eta ))}}} }}{{\prod\limits_{i = 1}^m {{{\left\| {{\omega _i}} \right\|}_{{L^{{q_i}}}(B(x,t))}}} }} < \infty.
	\end{equation}
	\begin{enumerate}[(i)]
		\item If $\mathop {\min }\limits_{1 \le k \le m} \{ {p_k}\}  > 1$, then
		\begin{equation*}
		{{\mathcal M}_\alpha } \in B\left( {\prod\limits_{i = 1}^m {{M^{{p_i},{\varphi _{1i}}}}\left( {{\omega _i}^{{p_i}}} \right)}  \to {M^{q,{\varphi _2}}}\left( {{u_{\vec \omega }}^q} \right)} \right) \Leftrightarrow \mathop {\sup }\limits_{x \in {\rn},r > 0} {r^\alpha }{\varphi _2}{(x,r)^{ - 1}}\prod\limits_{i = 1}^m {{\varphi _{1i}}(x,r)}  < \infty.
		\end{equation*}
		\item If $\mathop {\min }\limits_{1 \le k \le m} \{ {p_k}\}  = 1$, then 		\begin{equation*}
		{{\mathcal M}_\alpha } \in B\left( {\prod\limits_{i = 1}^m {{M^{{p_i},{\varphi _{1i}}}}\left( {{\omega _i}^{{p_i}}} \right)}  \to {WM^{q,{\varphi _2}}}\left( {{u_{\vec \omega }}^q} \right)} \right) \Leftrightarrow \mathop {\sup }\limits_{x \in {\rn},r > 0} {r^\alpha }{\varphi _2}{(x,r)^{ - 1}}\prod\limits_{i = 1}^m {{\varphi _{1i}}(x,r)}  < \infty.
		\end{equation*}
	\end{enumerate}
\end{thm}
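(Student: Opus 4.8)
The plan is to prove the two directions of each equivalence separately, and to treat the strong case (i) and the weak case (ii) in parallel, since the lower bounds that drive the necessity argument are insensitive to the distinction between the strong and weak target norms. Throughout I abbreviate $B_0=B(x_0,r_0)$ and write $\Phi(x,r):=r^\alpha\varphi_2(x,r)^{-1}\prod_{i=1}^m\varphi_{1i}(x,r)$, so that the right-hand condition in both (i) and (ii) reads $\sup_{x,r}\Phi(x,r)<\infty$.

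For the direction ``$\Leftarrow$'', assume $\sup_{x,r}\Phi(x,r)=:K<\infty$. The idea is that this, together with the standing hypothesis \eqref{cen1}, forces condition \eqref{con5}, after which Theorem \ref{FRM} applies verbatim. Indeed, \eqref{cen1} says precisely that the inner double supremum appearing in \eqref{con5} is dominated, for every $x$ and $r$, by $[\vec{\varphi}_1]_A\,r^\alpha\prod_{i=1}^m\varphi_{1i}(x,r)$. Multiplying by $\varphi_2(x,r)^{-1}$ and taking the supremum over $x,r$ yields
\[
[\vec{\varphi}_1,\varphi_2]_A\;\le\;[\vec{\varphi}_1]_A\,\sup_{x,r}\Phi(x,r)\;=\;[\vec{\varphi}_1]_A\,K\;<\;\infty,
\]
so \eqref{con5} holds. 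Theorem \ref{FRM}(i) then gives $\mathcal{M}_\alpha\in B(\prod_{i=1}^m M^{p_i,\varphi_{1i}}(\omega_i^{p_i})\to M^{q,\varphi_2}(u_{\vec\omega}^q))$ when $\min_k p_k>1$, and Theorem \ref{FRM}(ii) gives the corresponding weak-type mapping when $\min_k p_k=1$. This settles the sufficiency in both parts.

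For the direction ``$\Rightarrow$'', fix a ball $B_0$ and test the assumed boundedness on $\vec f=(\chi_{B_0},\dots,\chi_{B_0})$. First I record the pointwise lower bound: for $x\in B_0$ one has $B_0\subseteq B(x,2r_0)$, so choosing the radius $2r_0$ in the defining supremum of $\mathcal{M}_\alpha$ gives
\[
\mathcal{M}_\alpha(\vec f)(x)\;\ge\;\frac{|B_0|^m}{|B(x,2r_0)|^{\,m-\alpha/n}}\;\gtrsim\;r_0^\alpha,\qquad x\in B_0.
\]
Since this holds on all of $B_0$, it feeds identically into the strong and the weak Morrey norm centered at $B_0$, producing
\[
\big\|\mathcal{M}_\alpha(\vec f)\big\|_{M^{q,\varphi_2}(u_{\vec\omega}^q)}\;\gtrsim\;\varphi_2(x_0,r_0)^{-1}\,u_{\vec\omega}^q(B_0)^{-1/q}\cdot r_0^\alpha\,u_{\vec\omega}^q(B_0)^{1/q}\;=\;r_0^\alpha\,\varphi_2(x_0,r_0)^{-1}.
\]
On the other side, the hypothesis $\varphi_{1i}\in\mathcal{G}_\omega^p$ yields the matching estimate $\|\chi_{B_0}\|_{M^{p_i,\varphi_{1i}}(\omega_i^{p_i})}\lesssim\varphi_{1i}(x_0,r_0)^{-1}$ for each $i$. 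Combining these with the operator bound $\|\mathcal{M}_\alpha\|$ gives $r_0^\alpha\varphi_2(x_0,r_0)^{-1}\lesssim\|\mathcal{M}_\alpha\|\prod_{i=1}^m\varphi_{1i}(x_0,r_0)^{-1}$; rearranging gives $\Phi(x_0,r_0)\lesssim\|\mathcal{M}_\alpha\|$, and taking the supremum over all $B_0$ produces the claimed condition.

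The main obstacle is the test-function estimate $\|\chi_{B_0}\|_{M^{p_i,\varphi_{1i}}(\omega_i^{p_i})}\approx\varphi_{1i}(x_0,r_0)^{-1}$, and this is exactly where the class $\mathcal{G}_\omega^p$ is needed. The value $\varphi_{1i}(x_0,r_0)^{-1}$ is attained by the competitor $B(x_0,r_0)=B_0$, so only the upper bound is at issue, and I would obtain it by splitting the supremum defining the norm according to the radius $r$ of the competing ball $B(x,r)$. For $r\le r_0$ the trivial bound $\omega_i^{p_i}(B_0\cap B(x,r))\le\omega_i^{p_i}(B(x,r))$ reduces the competitor to $\varphi_{1i}(x,r)^{-1}$, which property (1) of $\mathcal{G}_\omega^p$ controls by $\varphi_{1i}(x_0,r_0)^{-1}$. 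For $r>r_0$ the surviving factor is $\omega_i^{p_i}(B_0)^{1/p_i}/\omega_i^{p_i}(B(x,r))^{1/p_i}$, and it is precisely the weighted almost-monotonicity encoded in property (2) that absorbs it back into $\varphi_{1i}(x_0,r_0)^{-1}$. I expect the delicate points to be purely weight-theoretic: one must use that $\omega_i^{p_i}\in A_\infty$ (hence doubling, with positive finite mass on every ball) to compare the weighted measures of $B_0$ with those of the larger competing balls, and to guarantee $u_{\vec\omega}^q(B_0)\in(0,\infty)$ so that the cancellation in the lower bound for the target norm is legitimate.
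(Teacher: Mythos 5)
Your proposal is correct and follows essentially the same route as the paper: the paper omits a detailed proof of Theorem \ref{SCFRM} (proving only the analogous Theorem \ref{CCFRM1} and declaring the rest similar), and that model proof is exactly your scheme --- sufficiency by observing that \eqref{cen1} together with $\sup_{x,r} r^\alpha\varphi_2(x,r)^{-1}\prod_{i=1}^m\varphi_{1i}(x,r)<\infty$ forces the hypothesis \eqref{con5} of Theorem \ref{FRM}, and necessity by testing on $\vec\chi_{B_0}$, using the pointwise lower bound $\mathcal{M}_\alpha(\vec\chi_{B_0})\gtrsim r_0^\alpha$ on $B_0$ and the estimate $\|\chi_{B_0}\|_{M^{p_i,\varphi_{1i}}(\omega_i^{p_i})}\lesssim\varphi_{1i}(x_0,r_0)^{-1}$, which is precisely the paper's Lemma \ref{Gu7} for $\varphi_{1i}\in\mathcal{G}_\omega^p$ (your case split $r\le r_0$ versus $r>r_0$ is the standard proof of that lemma, so you could simply cite it).
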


\begin{thm}\label{CCFRM1}
	Let $m\geq 2$, $1< p_i<\infty$, $i=1,2,\ldots,m$ with $\frac{1}{p} = \sum\limits_{i = 1}^m {\frac{1}{{{p_i}}}}$, $\frac{{{\alpha _i}}}{n}{\rm{ = }}\frac{1}{{{p_i}}}{\rm{ - }}\frac{1}{{{q_i}}} \in (0,1)$, $\alpha  = \sum\limits_{i = 1}^m {{\alpha _i}}$, $\frac{1}{q} = \sum\limits_{i = 1}^m {\frac{1}{{{q_i}}}}$, $\vec{\omega}\in {A_{\vec P,q}}$ with ${\omega _i}^{{q_i}} \in {A_\infty }$, $i = 1, \ldots ,m$. Set $\vec b \in {\left( {\rm BMO} \right)^m}$, ${\varphi _i} \in \mathcal{G}_\omega ^p$, $i = 1, \ldots ,m$, $({{\vec \varphi }_1},{\varphi _2})$ is a group of non-negative measurable functions and ${{\vec \varphi }_1}$ satisfies the condition:
	\begin{align}\label{cen2}
	{\left[ {{{\vec \varphi }_1}} \right]_A}^{\prime }: =& \mathop {\sup }\limits_{x \in {\rn},r > 0} {\left( {\frac{{{{\left\| {\prod\limits_{i = 1}^m {({b_j} - {{({b_j})}_{B(x,r)}})} } \right\|}_{{L^q}{(B(x,r),u_{\vec \omega }^q)}}}}}{{{{\left\| {u_{\vec \omega }^{}} \right\|}_{{L^q}(B(x,r))}}\prod\limits_{i = 1}^m {{{\left\| {{b_i}} \right\|}_{\rm BMO}}} }}{r^\alpha }\prod\limits_{i = 1}^m {{\varphi _{1i}}(x,r)} } \right)^{ - 1}}\notag\\
	\times& \mathop {\sup }\limits_{t > r} {\left( {1 + \log \frac{t}{r}} \right)^m}\frac{{\mathop {{\rm{essinf}}}\limits_{t < \eta  < \infty } \prod\limits_{i = 1}^m {{\varphi _{1i}}(x,\eta ){{\left\| {{\omega _i}} \right\|}_{{L^{{p_i}}}(B(x,\eta ))}}} }}{{\prod\limits_{i = 1}^m {{{\left\| {{\omega _i}} \right\|}_{{L^{{q_i}}}(B(x,t))}}} }} < \infty.
	\end{align}
	Then we have
	\begin{align}
	&{\left\| {\mathcal{M}_{\alpha, \prod \vec b }} \right\|_{\prod\limits_{i = 1}^m {{M^{{p_i},{\varphi _{1i}}}}\left( {{\omega _i}^{{p_i}}} \right)}  \to {M^{q,{\varphi _2}}}\left( {{u_{\vec \omega }}^q} \right)}} \lesssim \prod\limits_{i = 1}^m {{{\left\| {{b_i}} \right\|}_{\rm BMO}}}\notag\\
	\Leftrightarrow& \mathop {\sup }\limits_{x \in {\rn},r > 0} {\varphi _2}{(x,r)^{ - 1}}\frac{{{{\left\| {\prod\limits_{i = 1}^m {({b_j} - {{({b_j})}_{B(x,r)}})} } \right\|}_{{L^q}{(B(x,r),u_{\vec \omega }^q)}}}}}{{{{\left\| {u_{\vec \omega }^{}} \right\|}_{{L^q}(B(x,r))}}\prod\limits_{i = 1}^m {{{\left\| {{b_i}} \right\|}_{\rm BMO}}} }}{r^\alpha }\prod\limits_{i = 1}^m {{\varphi _{1i}}(x,r)}  < \infty.
	\end{align}
\end{thm}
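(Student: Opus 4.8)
The plan is to prove the two implications of the equivalence separately: the sufficiency ($\Leftarrow$) I would reduce to the already-established Theorem \ref{CFRM}, while the necessity ($\Rightarrow$) I would obtain by testing $\mathcal{M}_{\alpha,\prod\vec b}$ against characteristic functions of balls. For the direction ($\Leftarrow$), abbreviate by $\Theta(x,r)$ the quantity multiplying $\varphi_2(x,r)^{-1}$ in the asserted right-hand condition, and by $\Psi(x,r)$ the inner supremum $\sup_{t>r}(1+\log\frac{t}{r})^m\,(\mathrm{essinf}\cdots)/(\prod_i\|\omega_i\|_{L^{q_i}(B(x,t))})$ occurring in \eqref{con6}. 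The standing hypothesis \eqref{cen2} is exactly $\sup_{x,r}\Theta(x,r)^{-1}\Psi(x,r)=[\vec\varphi_1]'_A<\infty$, while the right-hand condition assumed finite is $C_0:=\sup_{x,r}\varphi_2(x,r)^{-1}\Theta(x,r)<\infty$. The pointwise factorization $\varphi_2(x,r)^{-1}\Psi(x,r)=\big(\varphi_2(x,r)^{-1}\Theta(x,r)\big)\big(\Theta(x,r)^{-1}\Psi(x,r)\big)$ then yields $[\vec\varphi_1,\varphi_2]'_A\le C_0\,[\vec\varphi_1]'_A<\infty$, so condition \eqref{con6} holds and Theorem \ref{CFRM} gives boundedness with the norm bound $\lesssim\prod_i\|b_i\|_{\rm BMO}$.

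For the direction ($\Rightarrow$), fix $B_0=B(x_0,r_0)$ and take $f_i=\chi_{B_0}$. The key lower bound is that, for $x\in B_0$, restricting the defining supremum to the radius $2r_0$ (so that $B_0\subseteq B(x,2r_0)$) gives
\begin{equation*}
\mathcal{M}_{\alpha,\prod\vec b}(\chi_{B_0},\dots,\chi_{B_0})(x)\ge\frac{1}{|B(x,2r_0)|^{m-\alpha/n}}\prod_{i=1}^m\int_{B_0}|b_i(x)-b_i(y_i)|\,dy_i\gtrsim r_0^\alpha\prod_{i=1}^m|b_i(x)-(b_i)_{B_0}|,
\end{equation*}
where the last step uses $\int_{B_0}|b_i(x)-b_i(y_i)|\,dy_i\ge|B_0|\,|b_i(x)-(b_i)_{B_0}|$ and $|B_0|^m/|B(x,2r_0)|^{m-\alpha/n}\approx r_0^\alpha$. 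Taking the $L^q(B_0,u_{\vec\omega}^q)$-norm produces the quantity $r_0^\alpha\big\|\prod_{i=1}^m(b_i-(b_i)_{B_0})\big\|_{L^q(B_0,u_{\vec\omega}^q)}$ that appears in $C_0$, with constants depending only on $n,m,\alpha$.

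It remains to control the source norms, and here the hypotheses $\varphi_i\in\mathcal G_\omega^p$ enter: they are designed precisely so that $\|\chi_{B_0}\|_{M^{p_i,\varphi_{1i}}(\omega_i^{p_i})}\lesssim\varphi_{1i}(x_0,r_0)^{-1}$, whose verification amounts to dominating $\varphi_{1i}(x,r)^{-1}\omega_i^{p_i}(B(x,r))^{-1/p_i}\omega_i^{p_i}(B_0\cap B(x,r))^{1/p_i}$ by $\varphi_{1i}(x_0,r_0)^{-1}$, splitting into the cases $0<r\le r_0$ and $r>r_0$ and invoking the almost-decreasing property together with the two defining inequalities of $\mathcal G_\omega^p$. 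Inserting the target lower bound (estimating $\|\mathcal{M}_{\alpha,\prod\vec b}(\vec f)\|_{M^{q,\varphi_2}(u_{\vec\omega}^q)}$ from below by its value at $(x_0,r_0)$, using $u_{\vec\omega}^q(B_0)^{1/q}=\|u_{\vec\omega}\|_{L^q(B_0)}$) and the source estimate into the assumed boundedness inequality yields, after cancelling $\prod_i\|b_i\|_{\rm BMO}$ and rearranging,
\begin{equation*}
\varphi_2(x_0,r_0)^{-1}\frac{\big\|\prod_{i=1}^m(b_i-(b_i)_{B_0})\big\|_{L^q(B_0,u_{\vec\omega}^q)}}{\|u_{\vec\omega}\|_{L^q(B_0)}\prod_i\|b_i\|_{\rm BMO}}\,r_0^\alpha\prod_{i=1}^m\varphi_{1i}(x_0,r_0)\lesssim1,
\end{equation*}
and taking the supremum over $(x_0,r_0)$ gives $C_0<\infty$, as required.

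The principal obstacle is the norm estimate $\|\chi_{B_0}\|_{M^{p_i,\varphi_{1i}}(\omega_i^{p_i})}\lesssim\varphi_{1i}(x_0,r_0)^{-1}$: comparing the Morrey norm of a characteristic function to a single value of $\varphi_{1i}$ is exactly what the class $\mathcal G_\omega^p$ is built to guarantee, and executing the upper bound requires using both defining inequalities according to the relative size of the competing ball $B(x,r)$ and $B_0$. A secondary point requiring care is keeping every constant in the pointwise lower bound independent of $B_0$ and of $\vec b$, so that the normalization by $\prod_i\|b_i\|_{\rm BMO}$ is preserved throughout.
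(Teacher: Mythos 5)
Your proposal is correct and follows essentially the same route as the paper: sufficiency by reduction to Theorem \ref{CFRM} (your pointwise factorization $\varphi_2^{-1}\Psi=(\varphi_2^{-1}\Theta)\cdot(\Theta^{-1}\Psi)$ simply makes explicit what the paper compresses into ``we only need to prove necessity''), and necessity by testing on $\vec\chi_{B_0}=(\chi_{B_0},\ldots,\chi_{B_0})$ via the same pointwise lower bound $r_0^\alpha\prod_{i=1}^m|b_i(x)-(b_i)_{B_0}|\lesssim\mathcal{M}_{\alpha,\prod\vec b}(\vec\chi_{B_0})(x)$ together with restriction of the target Morrey norm to the ball $(x_0,r_0)$. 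The only cosmetic difference is that your ``principal obstacle'', the estimate $\|\chi_{B_0}\|_{M^{p_i,\varphi_{1i}}(\omega_i^{p_i})}\lesssim\varphi_{1i}(x_0,r_0)^{-1}$, is exactly Lemma \ref{Gu7} of the paper, so you may cite it rather than re-derive it from the definition of $\mathcal G_\omega^p$.
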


\begin{thm}\label{CCFRM2}
	Let $m\geq 2$, $1< p_i<\infty$, $i=1,2,\ldots,m$ with $\frac{1}{p} = \sum\limits_{i = 1}^m {\frac{1}{{{p_i}}}}$, $\frac{{{\alpha _i}}}{n}{\rm{ = }}\frac{1}{{{p_i}}}{\rm{ - }}\frac{1}{{{q_i}}} \in (0,1)$, $\alpha  = \sum\limits_{i = 1}^m {{\alpha _i}}$, $\frac{1}{q} = \sum\limits_{i = 1}^m {\frac{1}{{{q_i}}}}$, $\vec{\omega}\in {A_{\vec P,q}}$ with ${\omega _i}^{{q_i}} \in {A_\infty }$, $i = 1, \ldots ,m$. Set $\vec b \in {\left( {\rm BMO} \right)^m}$, ${\varphi _i} \in \mathcal{G}_\omega ^p$, $i = 1, \ldots ,m$, $({{\vec \varphi }_1},{\varphi _2})$ is a group of non-negative measurable functions and ${{\vec \varphi }_1}$ satisfies the condition:
	\begin{align}\label{cen3}
	{\left[ {{{\vec \varphi }_1}} \right]_A}^{\prime \prime }: =& \mathop {\sup }\limits_{x \in {\rn},r > 0} {\left( {\frac{{{{\left\| {{b_j} - {{({b_j})}_{B(x,r)}}} \right\|}_{{L^q}{(B(x,r),u_{\vec \omega }^q)}}}}}{{{{\left\| {u_{\vec \omega }^{}} \right\|}_{{L^q}(B(x,r))}}{{\left\| {{b_j}} \right\|}_{\rm BMO}}}}{r^\alpha }\prod\limits_{i = 1}^m {{\varphi _{1i}}(x,r)} } \right)^{-1}}\notag\\
	\times& \mathop {\sup }\limits_{t > r} \left( {1 + \log \frac{t}{r}} \right)\frac{{\mathop {{\rm{essinf}}}\limits_{t < \eta  < \infty } \prod\limits_{i = 1}^m {{\varphi _{1i}}(x,\eta ){{\left\| {{\omega _i}} \right\|}_{{L^{{p_i}}}(B(x,\eta ))}}} }}{{\prod\limits_{i = 1}^m {{{\left\| {{\omega _i}} \right\|}_{{L^{{q_i}}}(B(x,t))}}} }} < \infty.
	\end{align}
	Then we have
	\begin{align}
	&{\left\| {M_{_{\alpha ,\sum {\vec b} }}^j} \right\|_{\prod\limits_{i = 1}^m {{M^{{p_i},{\varphi _{1i}}}}\left( {{\omega _i}^{{p_i}}} \right)}  \to {M^{q,{\varphi _2}}}\left( {{u_{\vec \omega }}^q} \right)}} \lesssim {\left\| {{b_j}} \right\|_{\rm BMO}}\notag\\
	\Leftrightarrow& \mathop {\sup }\limits_{x \in {\rn},r > 0} {\varphi _2}{(x,r)^{ - 1}}\frac{{{{\left\| {{b_j} - {{({b_j})}_{B(x,r)}}} \right\|}_{{L^q}{(B(x,r),u_{\vec \omega }^q)}}}}}{{{{\left\| {u_{\vec \omega }^{}} \right\|}_{{L^q}(B(x,r))}}{{\left\| {{b_j}} \right\|}_{\rm BMO}}}}{r^\alpha }\prod\limits_{i = 1}^m {{\varphi _{1i}}(x,r)}  < \infty.
	\end{align}
	Moreover, if the above conditions hold, we have
	$${\left\| {{M_{\alpha ,\sum {\vec b} }}} \right\|_{\prod\limits_{i = 1}^m {{M^{{p_i},{\varphi _{1i}}}}\left( {{\omega _i}^{{p_i}}} \right)}  \to {M^{q,{\varphi _2}}}\left( {{u_{\vec \omega }}^q} \right)}} \lesssim \mathop {\max }\limits_{1 \le i \le m} {\left\| {{b_i}} \right\|_{\rm BMO}}.$$
\end{thm}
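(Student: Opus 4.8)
The plan is to establish the two implications separately: the sufficiency $(\Leftarrow)$ will be reduced to the already-proved Theorem \ref{CFRM2}, while the necessity $(\Rightarrow)$ will be obtained by testing the operator on characteristic functions of balls. Throughout write $u:=u_{\vec\omega}$, let $\Phi(x,r)$ denote the inner supremum
\[ \Phi(x,r):=\sup_{t>r}\Bigl(1+\log\tfrac{t}{r}\Bigr)\frac{\mathop{{\rm essinf}}\limits_{t<\eta<\infty}\prod_{i=1}^{m}\varphi_{1i}(x,\eta)\|\omega_i\|_{L^{p_i}(B(x,\eta))}}{\prod_{i=1}^{m}\|\omega_i\|_{L^{q_i}(B(x,t))}} \]
that occurs in both \eqref{con7} and \eqref{cen3}, and set
\[ N_j(x,r):=\frac{\|b_j-(b_j)_{B(x,r)}\|_{L^q(B(x,r),u^q)}}{\|u\|_{L^q(B(x,r))}\,\|b_j\|_{\rm BMO}}\,r^{\alpha}\prod_{i=1}^{m}\varphi_{1i}(x,r). \]
With this notation \eqref{cen3} reads $\Phi(x,r)\lesssim N_j(x,r)$ uniformly, the right-hand side of the claimed equivalence reads $N_j(x,r)\lesssim\varphi_2(x,r)$ uniformly, and condition \eqref{con7} reads $\varphi_2(x,r)^{-1}\Phi(x,r)\lesssim 1$ uniformly.

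Sufficiency is then immediate. Assuming the right-hand condition $N_j\lesssim\varphi_2$ and the standing hypothesis \eqref{cen3} $\Phi\lesssim N_j$, I chain the two to get $\Phi(x,r)\lesssim N_j(x,r)\lesssim\varphi_2(x,r)$, i.e. $\varphi_2(x,r)^{-1}\Phi(x,r)\lesssim 1$; taking the supremum over $x\in\rn,\ r>0$ gives $[\vec\varphi_1,\varphi_2]_A''<\infty$, which is exactly \eqref{con7}. Theorem \ref{CFRM2} then yields the boundedness of $M_{\alpha,\sum\vec b}^j$ from $\prod_{i=1}^m M^{p_i,\varphi_{1i}}(\omega_i^{p_i})$ to $M^{q,\varphi_2}(u^q)$ together with the bound $\lesssim\|b_j\|_{\rm BMO}$ on its norm.

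For the necessity I fix $B_0=B(x_0,r_0)$ and test with $f_i=\chi_{B_0}$, $i=1,\dots,m$. For $x\in B_0$ one has $B_0\subseteq B(x,2r_0)$; choosing the radius $2r_0$ in the supremum defining $\mathcal M_\alpha$, restricting every inner integration to $B_0$, and using $\int_{B_0}|b_j(x)-b_j(y)|\,dy\ge|B_0|\,|b_j(x)-(b_j)_{B_0}|$ together with $|B_0|\approx|B(x,2r_0)|\approx r_0^{\,n}$ produces the pointwise lower bound
\[ M_{\alpha,\sum\vec b}^j(\chi_{B_0},\dots,\chi_{B_0})(x)\ \gtrsim\ r_0^{\,\alpha}\,|b_j(x)-(b_j)_{B_0}|,\qquad x\in B_0. \]
Taking $L^q(B_0,u^q)$-norms gives $\|M_{\alpha,\sum\vec b}^j(\chi_{B_0},\dots,\chi_{B_0})\|_{L^q(B_0,u^q)}\gtrsim r_0^{\,\alpha}\,\|b_j-(b_j)_{B_0}\|_{L^q(B_0,u^q)}$, and keeping only the single ball $B_0$ in the definition of the target Morrey norm,
\[ \|M_{\alpha,\sum\vec b}^j(\chi_{B_0},\dots,\chi_{B_0})\|_{M^{q,\varphi_2}(u^q)}\ \gtrsim\ \varphi_2(x_0,r_0)^{-1}\,\|u\|_{L^q(B_0)}^{-1}\,r_0^{\,\alpha}\,\|b_j-(b_j)_{B_0}\|_{L^q(B_0,u^q)}. \]
On the other hand the assumed operator bound gives $\|M_{\alpha,\sum\vec b}^j(\chi_{B_0},\dots,\chi_{B_0})\|_{M^{q,\varphi_2}(u^q)}\lesssim\|b_j\|_{\rm BMO}\prod_{i=1}^m\|\chi_{B_0}\|_{M^{p_i,\varphi_{1i}}(\omega_i^{p_i})}$, and here the point of the class $\mathcal G_\omega^p$ enters: properties (1)--(2) in its definition are tailored precisely so that $\|\chi_{B_0}\|_{M^{p_i,\varphi_{1i}}(\omega_i^{p_i})}\lesssim\varphi_{1i}(x_0,r_0)^{-1}$. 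Combining the two displays and cancelling $\|b_j\|_{\rm BMO}$ and $\|u\|_{L^q(B_0)}$ gives $N_j(x_0,r_0)\lesssim\varphi_2(x_0,r_0)$; taking the supremum over $x_0,r_0$ is exactly the right-hand side of the equivalence. Finally the ``moreover'' statement follows from $M_{\alpha,\sum\vec b}=\sum_{j=1}^m M_{\alpha,\sum\vec b}^j$ by the triangle inequality in $M^{q,\varphi_2}(u^q)$, summing the norm bounds and using $\sum_{j=1}^m\|b_j\|_{\rm BMO}\lesssim\max_{1\le i\le m}\|b_i\|_{\rm BMO}$.

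The easy half is the sufficiency, which collapses to a one-line chaining once one notices that the $(1+\log\frac{t}{r})$-factor in \eqref{cen3} carries the same (first) power as in \eqref{con7}, so that $\Phi$ is literally the same object in both conditions. The genuine work sits in the necessity, and the main obstacle is the sharp characteristic-function estimate $\|\chi_{B_0}\|_{M^{p_i,\varphi_{1i}}(\omega_i^{p_i})}\lesssim\varphi_{1i}(x_0,r_0)^{-1}$: one must split the defining supremum into the regimes $r\le r_0$ and $r>r_0$, use the almost-decreasing property and property (1) of $\mathcal G_\omega^p$ in the first regime, and property (2) (together with the $A_\infty$-doubling of $\omega_i^{q_i}$) to control the tail $r>r_0$, where $B(x,r)\cap B_0$ is a genuine proper subset. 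The pointwise lower bound, by contrast, is robust and uses only the reverse triangle inequality and the comparability of the ball volumes.
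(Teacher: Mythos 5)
Your proposal is correct and takes essentially the same route as the paper, which proves Theorem \ref{CCFRM1} and declares the remaining cases analogous: sufficiency by chaining the assumed supremum condition with \eqref{cen3} to recover \eqref{con7} and then invoking the corresponding sufficiency result (here Theorem \ref{CFRM2}), and necessity by testing on $\vec \chi_{B_0}$ via the pointwise lower bound $r_0^{\alpha}\,|b_j(x)-(b_j)_{B_0}|\lesssim M_{\alpha,\sum\vec b}^{\,j}(\vec\chi_{B_0})(x)$ together with the characteristic-function estimate, and the ``moreover'' part by summing over $j$. The only cosmetic difference is that the paper simply cites Lemma \ref{Gu7} for $\left\|\chi_{B_0}\right\|_{M^{p_i,\varphi_{1i}}(\omega_i^{p_i})}\lesssim\varphi_{1i}(x_0,r_0)^{-1}$, whereas you sketch its proof from the defining properties of $\mathcal{G}_\omega^p$.
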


\begin{rem}
	In the case $m=1$, the above results were proved in \cite{Gu7}.
\end{rem}

\begin{center}
	\textbf{2: Multilinear fractional integral operators}
\end{center}

In \cite{Hu-Wang}, The authors give the boundedness of multilinear fractional integrals on generalized weighted Morrey spaces, but the conditions they give are related to $\alpha$. In this topics, we give another conditions that do not depend on $\alpha$.
\begin{thm}\label{thm7}
	Let $m\geq 2$, $1\leq p_i<\infty$, $i=1,2,\ldots,m$ with $\frac{1}{p} = \sum\limits_{i = 1}^m {\frac{1}{{{p_i}}}}$, $\frac{{{\alpha _i}}}{n}{\rm{ = }}\frac{1}{{{p_i}}}{\rm{ - }}\frac{1}{{{q_i}}} \in (0,1)$, $\alpha  = \sum\limits_{i = 1}^m {{\alpha _i}}$, $\frac{1}{q} = \sum\limits_{i = 1}^m {\frac{1}{{{q_i}}}}$, $\vec{\omega}=(\omega_1,\ldots,\omega_m) \in {A_{\vec P,q}}$ with ${\omega _i}^{{q_i}} \in {A_\infty }$, $i = 1, \ldots ,m$, and a group of non-negative measurable functions $({{\vec \varphi }_1},{\varphi _2}) = ({\varphi _{11}}, \ldots ,{\varphi _{1m}},{\varphi _2})$ satisfy the condition:
	\begin{equation}\label{con3}
	{\left[ {{{\vec \varphi }_1},{\varphi _2}} \right]_B}: = \mathop {\sup }\limits_{x \in {\rn},r > 0} {\varphi _2}{(x,r)^{{\rm{ - }}1}}\int_r^\infty  {\frac{{\mathop {{\rm{essinf}}}\limits_{t < \eta  < \infty } \prod\limits_{i = 1}^m {{\varphi _{1i}}(x,\eta ){{\left\| {{\omega _i}} \right\|}_{{L^{{p_i}}}(D(x,\eta ))}}} }}{{\prod\limits_{i = 1}^m {{{\left\| {{\omega _i}} \right\|}_{{L^{{q_i}}}(D(x,t))}}} }}\frac{{dt}}{t}}  < \infty.
	\end{equation}
	
	\begin{enumerate}[(i)]
		\item If $\mathop {\min }\limits_{1 \le k \le m} \{ {p_k}\}  > 1$, then ${\mathcal I}_{\alpha,m}$ is bounded from ${M^{{p_1},{\varphi _{11}}}}({\omega _1}^{p_1}) \times  \cdots  \times {M^{{p_m},{\varphi _{1m}}}}({\omega _m}^{p_m})$ to ${M^{q,{\varphi _2}}}({u_{\vec \omega }}^q)$, i.e., ${\mathcal I}_{\alpha,m} \in B\left( {\prod\limits_{i = 1}^m {{M^{{p_i},{\varphi _{1i}}}}\left( {{\omega _i}^{p_i}} \right)}  \to {M^{q,{\varphi _2}}}\left( {{u_{\vec \omega }}^q} \right)} \right)$.
		\item If $\mathop {\min }\limits_{1 \le k \le m} \{ {p_k}\}  = 1$, then ${\mathcal I}_{\alpha,m}$ is bounded from ${M^{{p_1},{\varphi _{11}}}}({\omega _1}^{p_1}) \times  \cdots  \times {M^{{p_m},{\varphi _{1m}}}}({\omega _m}^{p_m})$ to ${WM^{q,{\varphi _2}}}({u_{\vec \omega }}^q)$, i.e., ${\mathcal I}_{\alpha,m}\in B\left( {\prod\limits_{i = 1}^m {{M^{{p_i},{\varphi _{1i}}}}\left( {{\omega _i}^{p_i}} \right)}  \to {WM^{q,{\varphi _2}}}\left( {{u_{\vec \omega }}^q} \right)} \right)$.
	\end{enumerate}
\end{thm}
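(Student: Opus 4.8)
The plan is to reduce the Morrey estimate to a \emph{local estimate} of Guliyev type and then to globalize it by means of the structural condition \eqref{con3}. Concretely, I would first establish that for every ball $B_0=B(x_0,r)$ one has
\[
\left\| \mathcal{I}_{\alpha,m}(\vec f)\right\|_{L^q(B_0,u_{\vec\omega}^q)} \lesssim \left\| u_{\vec\omega}\right\|_{L^q(B_0)}\int_{2r}^\infty \frac{\prod_{i=1}^m\left\| f_i\right\|_{L^{p_i}(B(x_0,t),\omega_i^{p_i})}}{\prod_{i=1}^m\left\| \omega_i\right\|_{L^{q_i}(B(x_0,t))}}\frac{dt}{t},
\]
with the weak norm $WL^q$ replacing $L^q$ on the left in case (ii), when $\min_{k}p_k=1$. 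Granting this estimate, the theorem follows by dividing through by $\varphi_2(x_0,r)\,\| u_{\vec\omega}\|_{L^q(B_0)}$ and taking the supremum over $x_0$ and $r$, as explained in the last paragraph.

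To prove the local estimate I would split each factor as $f_i=f_i^0+f_i^\infty$ with $f_i^0=f_i\chi_{2B_0}$, and expand $\mathcal{I}_{\alpha,m}(\vec f)$ by multilinearity into $2^m$ terms. The purely local term $\mathcal{I}_{\alpha,m}(f_1^0,\dots,f_m^0)$ is controlled on $B_0$ by the global weighted bound of Moen \cite{Moen} (resp.\ the weighted weak endpoint bound, for case (ii)), giving $\prod_i\| f_i\|_{L^{p_i}(2B_0,\omega_i^{p_i})}$; one then reinstates the integral by restricting it to $t\in[2r,4r]$ and using that $\omega_i^{q_i}\in A_\infty$ is doubling together with the comparability $\| u_{\vec\omega}\|_{L^q(B_0)}\approx\prod_i\| \omega_i\|_{L^{q_i}(B_0)}$ for $\vec\omega\in A_{\vec P,q}$. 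For each of the remaining terms, which involve at least one far factor, I would use that $|x-y_i|\approx|x_0-y_i|$ whenever $x\in B_0$ and $y_i\notin 2B_0$, so that $\big(\sum_i|x-y_i|\big)^{-(mn-\alpha)}\lesssim\big(\sum_i|x_0-y_i|\big)^{-(mn-\alpha)}$; writing the latter as $c\int_{\sum_i|x_0-y_i|}^\infty t^{-(mn-\alpha)-1}\,dt$ and applying Fubini reduces the term to a constant (in $x$) multiple of $\int_{cr}^\infty t^{-(mn-\alpha)-1}\prod_i\big(\int_{B(x_0,t)}|f_i|\big)\,dt$.

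The main obstacle is the bookkeeping in this far-term estimate. By H\"older, $\int_{B(x_0,t)}|f_i|\le\| f_i\|_{L^{p_i}(B(x_0,t),\omega_i^{p_i})}\,\| \omega_i^{-1}\|_{L^{p_i'}(B(x_0,t))}$, and invoking $\vec\omega\in A_{\vec P,q}$ gives $\prod_i\| \omega_i^{-1}\|_{L^{p_i'}(B(x_0,t))}\lesssim t^{\,mn-\alpha}\,\| u_{\vec\omega}\|_{L^q(B(x_0,t))}^{-1}$; here the crucial exponent identity is $\sum_i \tfrac1{p_i'}+\tfrac1q=m-\tfrac\alpha n$, so that the power of $|B(x_0,t)|\approx t^n$ is exactly $mn-\alpha$ and cancels the kernel power. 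After replacing $\| u_{\vec\omega}\|_{L^q(B(x_0,t))}$ by $\prod_i\| \omega_i\|_{L^{q_i}(B(x_0,t))}$ via the same comparability, the far term is bounded by the $x$-independent quantity $\int_{cr}^\infty \prod_i\| f_i\|_{L^{p_i}(B(x_0,t),\omega_i^{p_i})}\big/\prod_i\| \omega_i\|_{L^{q_i}(B(x_0,t))}\,\tfrac{dt}{t}$; taking its $L^q(B_0,u_{\vec\omega}^q)$ norm (or $WL^q$ norm, which is no larger) produces the factor $\| u_{\vec\omega}\|_{L^q(B_0)}$, completing the local estimate. Getting these exponents and the $A_{\vec P,q}$ constants to line up precisely, and treating the weak endpoint when some $p_i=1$, is the technical heart of the argument.

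Finally, to globalize I would exploit the monotonicity of the weighted $L^{p_i}$ norm over nested balls: for every $\eta>t$ we have $B(x_0,t)\subseteq B(x_0,\eta)$, whence $\| f_i\|_{L^{p_i}(B(x_0,t),\omega_i^{p_i})}\le\| f_i\|_{M^{p_i,\varphi_{1i}}(\omega_i^{p_i})}\,\varphi_{1i}(x_0,\eta)\| \omega_i\|_{L^{p_i}(B(x_0,\eta))}$. Taking the essential infimum over $\eta>t$ and using $\prod_i\operatorname*{essinf}_{\eta>t}\le\operatorname*{essinf}_{\eta>t}\prod_i$ reproduces exactly the numerator appearing in \eqref{con3}. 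Dividing the local estimate by $\varphi_2(x_0,r)\| u_{\vec\omega}\|_{L^q(B_0)}$ and taking the supremum over $x_0,r$ then yields $\| \mathcal{I}_{\alpha,m}(\vec f)\|_{M^{q,\varphi_2}(u_{\vec\omega}^q)}\lesssim\left[\vec\varphi_1,\varphi_2\right]_B\prod_i\| f_i\|_{M^{p_i,\varphi_{1i}}(\omega_i^{p_i})}$, which is the strong bound in (i); the weak bound in (ii) follows verbatim, the only change being the use of the weak endpoint estimate for the local term.
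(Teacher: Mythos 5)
Your proposal is correct and follows essentially the same route as the paper: the same splitting $f_i=f_i^0+f_i^\infty$ with the weighted strong/weak bounds of Lemmas \ref{LB1} and \ref{LB2} for the purely local term, and H\"older's inequality together with the $A_{\vec P,q}$ exponent identity $\sum_i\frac{1}{p_i'}+\frac1q=m-\frac{\alpha}{n}$ and the comparability $\prod_i\|\omega_i\|_{L^{q_i}(B)}\approx\|u_{\vec\omega}\|_{L^q(B)}$ of Lemma \ref{cen4} for the far terms, arriving at the identical local estimate with the factor $\int_{2r}^\infty(\cdot)\,\frac{dt}{t}$. The only minor deviations are cosmetic: you reduce the far terms via a layer-cake/Fubini representation of the kernel where the paper sums over dyadic annuli $2^{j+1}B\setminus 2^jB$, and you globalize by inlining the monotonicity--essinf argument (which is precisely the sufficiency half of the weighted Hardy-operator Lemma \ref{Gu2} applied to the non-decreasing function $t\mapsto\prod_i\|f_i\|_{L^{p_i}(B(x_0,t),\omega_i^{p_i})}$) rather than citing that lemma.
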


\begin{thm}\label{CFRI1}
	Let $m\geq 2$, $1< p_i<\infty$, $i=1,2,\ldots,m$ with $\frac{1}{p} = \sum\limits_{i = 1}^m {\frac{1}{{{p_i}}}}$, $\frac{{{\alpha _i}}}{n}{\rm{ = }}\frac{1}{{{p_i}}}{\rm{ - }}\frac{1}{{{q_i}}} \in (0,1)$, $\alpha  = \sum\limits_{i = 1}^m {{\alpha _i}}$, $\frac{1}{q} = \sum\limits_{i = 1}^m {\frac{1}{{{q_i}}}}$, $\vec{\omega}=(\omega_1,\ldots,\omega_m) \in {A_{\vec P,q}}$ with ${\omega _i}^{{q_i}} \in {A_\infty }$, $i = 1, \ldots ,m$, and a group of non-negative measurable functions $({{\vec \varphi }_1},{\varphi _2})$ satisfy the condition:
	\begin{equation}\label{con4}
	\left[ {{{\vec \varphi }_1},{\varphi _2}} \right]_B^\prime : = \mathop {\sup }\limits_{x \in {\rn},r > 0} {\varphi _2}{(x,r)^{{\rm{ - }}1}}\int_r^\infty  {{{\left( {1 + \log \frac{t}{r}} \right)}^m}\frac{{\mathop {{\rm{essinf}}}\limits_{t < \eta  < \infty } \prod\limits_{i = 1}^m {{\varphi _{1i}}(x,\eta ){{\left\| {{\omega _i}} \right\|}_{{L^{{p_i}}}(D(x,\eta ))}}} }}{{\prod\limits_{i = 1}^m {{{\left\| {{\omega _i}} \right\|}_{{L^{{q_i}}}(D(x,t))}}} }}\frac{{dt}}{t}}  < \infty.
	\end{equation}
	
	If $\vec b \in {\left( {\rm BMO} \right)^m}$, then ${{\mathcal I}_{\alpha, m,\prod \vec b }}$ is bounded from ${M^{{p_1},{\varphi _{11}}}}(\omega _1^{{p_1}}) \times  \cdots  \times {M^{{p_m},{\varphi _{1m}}}}(\omega _m^{{p_m}})$ to ${M^{q,{\varphi _2}}}({u_{\vec \omega }}^q)$.
	Moreover, we have 
	$${\left\| {\mathcal{I}_{\alpha,m,\prod \vec b }} \right\|_{\prod\limits_{i = 1}^m {{M^{{p_i},{\varphi _{1i}}}}\left( {{\omega _i}^{{p_i}}} \right)}  \to {M^{q,{\varphi _2}}}\left( {{u_{\vec \omega }}^q} \right)}} \lesssim \prod\limits_{i = 1}^m {{{\left\| {{b_i}} \right\|}_{\rm BMO}}}.$$
\end{thm}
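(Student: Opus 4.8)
The plan is to follow the Guliyev-type reduction already used for Theorem~\ref{thm7}: I would first reduce the desired Morrey bound to a single \emph{local} estimate on an arbitrary ball, and then feed in condition~\eqref{con4}. Concretely, the central step will be to prove that for $B_0 = B(x_0,r)$ one has
\begin{equation*}
\bigl\| {\mathcal I}_{\alpha, m,\prod \vec b }(\vec f) \bigr\|_{L^q(B_0, u_{\vec \omega}^q)} \lesssim \prod_{i=1}^m \|b_i\|_{\rm BMO}\, \|u_{\vec \omega}\|_{L^q(B_0)} \int_r^\infty \Bigl(1 + \log\tfrac{t}{r}\Bigr)^m \frac{\prod_{i=1}^m \|f_i\|_{L^{p_i}(B(x_0,t),\omega_i^{p_i})}}{\prod_{i=1}^m \|\omega_i\|_{L^{q_i}(B(x_0,t))}} \frac{dt}{t}.
\end{equation*}
Granting this, I would divide by $\varphi_2(x_0,r)\,\|u_{\vec \omega}\|_{L^q(B_0)}$, then bound the product $\prod_{i=1}^m\|f_i\|_{L^{p_i}(B(x_0,t),\omega_i^{p_i})}$ by $\prod_{i=1}^m\|f_i\|_{M^{p_i,\varphi_{1i}}}$ times $\mathop{\rm essinf}\limits_{t<\eta<\infty}\prod_{i=1}^m\varphi_{1i}(x_0,\eta)\|\omega_i\|_{L^{p_i}(B(x_0,\eta))}$, using that $t\mapsto\|f_i\|_{L^{p_i}(B(x_0,t),\omega_i^{p_i})}$ is nondecreasing. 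Taking the supremum over $x_0$ and $r$, condition~\eqref{con4} bounds the remaining supremum by $\left[\vec\varphi_1,\varphi_2\right]_B^\prime$, which yields the claimed inequality with constant $\lesssim \prod_{i=1}^m \|b_i\|_{\rm BMO}$.

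The core of the argument is the local estimate. Fixing $B_0$ and $x \in B_0$, I would split each $f_i = f_i^0 + f_i^\infty$ with $f_i^0 = f_i \chi_{2B_0}$, so that by multilinearity ${\mathcal I}_{\alpha,m,\prod\vec b}(\vec f)$ decomposes into $2^m$ pieces indexed by $\sigma\in\{0,\infty\}^m$. For the purely local piece $\sigma=(0,\dots,0)$ I would invoke the known weighted strong-type bound ${\mathcal I}_{\alpha,m,\prod\vec b}\colon \prod_{i=1}^m L^{p_i}(\omega_i^{p_i}) \to L^q(u_{\vec \omega}^q)$ valid for $\vec\omega\in A_{\vec P,q}$, which produces $\prod_{i=1}^m\|b_i\|_{\rm BMO}\prod_{i=1}^m\|f_i\|_{L^{p_i}(2B_0,\omega_i^{p_i})}$; this is then absorbed into the integral form through the reverse doubling of the weight norms (from $\omega_i^{q_i}\in A_\infty$) and the monotonicity in $t$, exactly as in the proof of Theorem~\ref{thm7}, and the extra factor $(1+\log\tfrac{t}{r})^m\ge 1$ only helps. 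For each mixed or global piece I would expand the commutator factor by writing $b_i(x)-b_i(y_i) = (b_i(x)-(b_i)_{B_0}) + ((b_i)_{B_0}-b_i(y_i))$ and multiplying out, so that the $x$-dependent factors $\prod_{i\in S}(b_i(x)-(b_i)_{B_0})$ pull outside the integral and are controlled in $L^q(B_0,u_{\vec \omega}^q)$ by a generalized H\"older inequality together with the John--Nirenberg inequality.

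The hard part will be the global pieces. After pulling out the $x$-dependent BMO factors, I must estimate, for $x\in B_0$, integrals of the form $\int \prod_{i\notin S}|(b_i)_{B_0}-b_i(y_i)|\,|f_i(y_i)|\,(\sum_j|x-y_j|)^{-(mn-\alpha)}\,d\vec y$ over the relevant far regions. Decomposing $(2B_0)^c$ into dyadic annuli $B(x_0,2^{k+1}r)\setminus B(x_0,2^k r)$, on which the kernel is comparable to $(2^k r)^{-(mn-\alpha)}$, and splitting $(b_i)_{B_0}-b_i(y_i)$ as $((b_i)_{B_0}-(b_i)_{B(x_0,2^k r)}) + ((b_i)_{B(x_0,2^k r)}-b_i(y_i))$, each factor contributes a term of size $\lesssim k\,\|b_i\|_{\rm BMO}$; since up to $m$ such factors sit inside a single piece, the dyadic summation over $k$ produces precisely the weight $(1+\log\tfrac{t}{r})^m$ that distinguishes condition~\eqref{con4} from~\eqref{con3}. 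The delicate bookkeeping is to verify that every one of the $2^m$ pieces is dominated by this single log power while the accompanying H\"older and $A_{\vec P,q}$ estimates correctly reassemble $\prod_{i=1}^m\|\omega_i\|_{L^{q_i}(B(x_0,t))}$ in the denominator; this is where the iterated structure makes the argument heavier than the single-log estimate behind Theorem~\ref{CFRM2}.
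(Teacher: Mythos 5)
Your proposal is correct and follows essentially the same route as the paper's proof: the same splitting $f_i=f_i^0+f_i^\infty$ into $2^m$ pieces, Lemma~\ref{LB2} plus the absorption trick from Theorem~\ref{thm7} for the purely local piece, expansion of the commutator around $(b_i)_{B_0}$ with the $x$-dependent factors controlled by weighted John--Nirenberg (the paper's Lemma~\ref{Gu1}), and the far-region analysis yielding the $(1+\log\frac{t}{r})^m$ weight that condition~\eqref{con4} is designed to handle. The only cosmetic differences are that the paper packages your per-annulus bound $\lesssim k\|b_i\|_{\rm BMO}$ into Lemma~\ref{Gu1} applied in the dual weighted norm $\|b_i-(b_i)_B\|_{L^{p_i'}(B(x_0,t),\omega_i^{-p_i'})}$ after passing to the continuous integral, and your final monotonicity-plus-essinf reduction is exactly the sufficiency direction of the paper's Lemma~\ref{Gu3}.
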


\begin{thm}\label{CFRI2}
	Let $m\geq 2$, $1< p_i<\infty$, $i=1,2,\ldots,m$ with $\frac{1}{p} = \sum\limits_{i = 1}^m {\frac{1}{{{p_i}}}}$, $\frac{{{\alpha _i}}}{n}{\rm{ = }}\frac{1}{{{p_i}}}{\rm{ - }}\frac{1}{{{q_i}}} \in (0,1)$, $\alpha  = \sum\limits_{i = 1}^m {{\alpha _i}}$, $\frac{1}{q} = \sum\limits_{i = 1}^m {\frac{1}{{{q_i}}}}$, $\vec{\omega}=(\omega_1,\ldots,\omega_m) \in {A_{\vec P,q}}$ with ${\omega _i}^{{q_i}} \in {A_\infty }$, $i = 1, \ldots ,m$, and a group of non-negative measurable functions $({{\vec \varphi }_1},{\varphi _2})$ satisfy the condition:
	\begin{equation}
	\left[ {{{\vec \varphi }_1},{\varphi _2}} \right]_B^{\prime \prime}: = \mathop {\sup }\limits_{x \in {\rn},r > 0} {\varphi _2}{(x,r)^{{\rm{ - }}1}}\int_r^\infty  {{{\left( {1 + \log \frac{t}{r}} \right)}}\frac{{\mathop {{\rm{essinf}}}\limits_{t < \eta  < \infty } \prod\limits_{i = 1}^m {{\varphi _{1i}}(x,\eta ){{\left\| {{\omega _i}} \right\|}_{{L^{{p_i}}}(D(x,\eta ))}}} }}{{\prod\limits_{i = 1}^m {{{\left\| {{\omega _i}} \right\|}_{{L^{{q_i}}}(D(x,t))}}} }}\frac{{dt}}{t}}  < \infty.
	\end{equation}
	
	If $\vec b \in {\left( {\rm BMO} \right)^m}$, then ${{\mathcal I}_{_{\alpha ,m,\sum {\vec b} }}^j}$ is bounded from ${M^{{p_1},{\varphi _{11}}}}(\omega _1^{{p_1}}) \times  \cdots  \times {M^{{p_m},{\varphi _{1m}}}}(\omega _m^{{p_m}})$ to ${M^{q,{\varphi _2}}}({u_{\vec \omega }}^q)$.
	Moreover, we have 
	$${\left\| {{\mathcal I}_{_{\alpha ,m,\sum {\vec b} }}^j} \right\|_{\prod\limits_{i = 1}^m {{M^{{p_i},{\varphi _{1i}}}}\left( {{\omega _i}^{{p_i}}} \right)}  \to {M^{q,{\varphi _2}}}\left( {{u_{\vec \omega }}^q} \right)}} \lesssim {\left\| {{b_j}} \right\|_{\rm BMO}},$$ and 
	$${\left\| {{{\mathcal I}_{\alpha ,m,\sum {\vec b} }}} \right\|_{\prod\limits_{i = 1}^m {{M^{{p_i},{\varphi _{1i}}}}\left( {{\omega _i}^{{p_i}}} \right)}  \to {M^{q,{\varphi _2}}}\left( {{u_{\vec \omega }}^q} \right)}} \lesssim \mathop {\max }\limits_{1 \le i \le m} {\left\| {{b_i}} \right\|_{\rm BMO}}.$$
\end{thm}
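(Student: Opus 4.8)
The plan is to mirror the proof of Theorem~\ref{CFRI1}, reducing the desired bound $\prod_{i=1}^m M^{p_i,\varphi_{1i}}(\omega_i^{p_i}) \to M^{q,\varphi_2}(u_{\vec\omega}^q)$ to a single \emph{local} estimate on an arbitrary ball, and then feeding that estimate into the structural condition $[\vec\varphi_1,\varphi_2]_B^{\prime\prime}<\infty$. The decisive structural feature is that the single-term commutator ${\mathcal I}_{\alpha,m,\sum\vec b}^j(\vec f)(x)={\mathcal I}_{\alpha,m}(f_1,\ldots,(b_j(x)-b_j)f_j,\ldots,f_m)(x)$ carries only the \emph{one} symbol $b_j$; consequently only one dyadic oscillation of a $\rm BMO$ function is produced, which is exactly why the single factor $1+\log\frac{t}{r}$ in $[\vec\varphi_1,\varphi_2]_B^{\prime\prime}$ replaces the power $(1+\log\frac{t}{r})^m$ of \eqref{con4}.

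First I would prove that for every ball $B_0=B(x_0,r)$,
\begin{equation*}
\left\| {\mathcal I}_{\alpha,m,\sum\vec b}^j(\vec f) \right\|_{L^q(B_0,u_{\vec\omega}^q)} \lesssim \|b_j\|_{\rm BMO}\,\|u_{\vec\omega}\|_{L^q(B_0)} \int_{2r}^\infty \left(1+\log\frac{t}{r}\right)\frac{\prod_{i=1}^m\|\omega_i f_i\|_{L^{p_i}(B(x_0,t))}}{\prod_{i=1}^m\|\omega_i\|_{L^{q_i}(B(x_0,t))}}\frac{dt}{t}.
\end{equation*}
To this end I would split each $f_i=f_i^0+f_i^\infty$ with $f_i^0=f_i\chi_{2B_0}$, expand ${\mathcal I}_{\alpha,m,\sum\vec b}^j(\vec f)$ by multilinearity into $2^m$ pieces, and in each piece decompose the symbol as $b_j(x)-b_j(y_j)=(b_j(x)-(b_j)_{B_0})+((b_j)_{B_0}-b_j(y_j))$. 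For the purely local piece (all $f_i^0$) I would invoke the multiple-weight $L^p$-boundedness of the multilinear fractional integral commutator \cite{chen-wu,Moen} to obtain $\|b_j\|_{\rm BMO}\prod_{i=1}^m\|\omega_i f_i\|_{L^{p_i}(2B_0)}$, and then dominate this product by the tail integral by restricting to $t\in(2r,4r)$ and using the doubling of $\omega_i^{q_i}$ together with $\prod_{i=1}^m\|\omega_i\|_{L^{q_i}(2B_0)}\approx\|u_{\vec\omega}\|_{L^q(B_0)}$, which comes from $\vec\omega\in A_{\vec P,q}$.

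The substantive work, and the step I expect to be the main obstacle, is the control of the pieces containing at least one $f_i^\infty$. For $x\in B_0$ and $y_i\in(2B_0)^c$ the denominator $(|x-y_1|+\cdots+|x-y_m|)^{mn-\alpha}$ is comparable to a power of $\max_i|x_0-y_i|$, so a dyadic decomposition of the outer region into annuli $B(x_0,2^{k+1}r)\setminus B(x_0,2^k r)$, followed by Hölder's inequality in the exponents $p_1,\ldots,p_m$ and the $A_{\vec P,q}$ estimates, reproduces the integral $\int_{2r}^\infty(\cdots)\frac{dt}{t}$. The factor $b_j(x)-(b_j)_{B_0}$, after the $L^q(B_0,u_{\vec\omega}^q)$ norm in $x$ is taken, yields $\|b_j-(b_j)_{B_0}\|_{L^q(B_0,u_{\vec\omega}^q)}\lesssim\|b_j\|_{\rm BMO}\|u_{\vec\omega}\|_{L^q(B_0)}$ by the weighted John--Nirenberg inequality (valid since $u_{\vec\omega}^q\in A_\infty$), while the factor $(b_j)_{B_0}-b_j(y_j)$ is split across scales as $((b_j)_{B_0}-(b_j)_{B(x_0,t)})+((b_j)_{B(x_0,t)}-b_j(y_j))$: the first difference contributes exactly one $\|b_j\|_{\rm BMO}(1+\log\frac{t}{r})$, and the second merges with $f_j$ into the local $L^{p_j}$ norm. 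Checking that this mechanism produces \emph{one} logarithmic factor rather than $m$ is the crux of the argument, and is precisely what separates this estimate from Theorem~\ref{CFRI1}.

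Once the local estimate is established, the Morrey bound is routine: estimating $\|\omega_i f_i\|_{L^{p_i}(B(x_0,t))}\le\|f_i\|_{M^{p_i,\varphi_{1i}}(\omega_i^{p_i})}\varphi_{1i}(x_0,t)\|\omega_i\|_{L^{p_i}(B(x_0,t))}$, using monotonicity in $t$ to pass from the product of these bounds to $\mathop{\mathrm{essinf}}\limits_{t<\eta<\infty}\prod_{i=1}^m\varphi_{1i}(x_0,\eta)\|\omega_i\|_{L^{p_i}(B(x_0,\eta))}$, and then applying the definition of $[\vec\varphi_1,\varphi_2]_B^{\prime\prime}$, gives $\|{\mathcal I}_{\alpha,m,\sum\vec b}^j(\vec f)\|_{M^{q,\varphi_2}(u_{\vec\omega}^q)}\lesssim\|b_j\|_{\rm BMO}\prod_{i=1}^m\|f_i\|_{M^{p_i,\varphi_{1i}}(\omega_i^{p_i})}$. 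The bound for the full operator ${\mathcal I}_{\alpha,m,\sum\vec b}=\sum_{j=1}^m{\mathcal I}_{\alpha,m,\sum\vec b}^j$ then follows from the triangle inequality, with the $m$ summands absorbed into the implicit constant to produce the factor $\max_{1\le i\le m}\|b_i\|_{\rm BMO}$.
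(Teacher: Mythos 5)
Your proposal is correct and takes essentially the approach the paper intends: Theorem \ref{CFRI2} is handled there implicitly as the one-symbol analogue of Theorem \ref{CFRI1}, and your argument reproduces exactly that scheme --- the splitting $f_i=f_i^0+f_i^\infty$, the symbol decomposition $b_j(x)-b_j(y_j)=\left(b_j(x)-(b_j)_{B_0}\right)+\left((b_j)_{B_0}-b_j(y_j)\right)$, Lemma \ref{LB2} for the purely local piece with the tail-integral domination trick of \eqref{ieq5}, Lemma \ref{Gu1} (weighted John--Nirenberg, using $u_{\vec\omega}^q\in A_\infty$ and $\omega_j^{-p_j'}\in A_\infty$ from Lemma \ref{lem4}) for the oscillation factors, Lemma \ref{cen4} with $\vec\omega\in A_{\vec P,q}$ for the weight norms, and the Hardy-operator lemma tested against $\left[\vec\varphi_1,\varphi_2\right]_B^{\prime\prime}$. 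Your observation that the single symbol $b_j$ produces at most one factor $1+\log\frac{t}{r}$ per piece --- the pulled-out factor $b_j(x)-(b_j)_{B_0}$ costing no logarithm after the $L^q(B_0,u_{\vec\omega}^q)$ norm and the inner factor costing exactly one via Lemma \ref{Gu1} --- is precisely why the power $m$ in \eqref{con4} drops to $1$, and the concluding triangle inequality over $j$ yielding $\max_{1\le i\le m}\|b_i\|_{\rm BMO}$ matches the statement.
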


In this topic, the above three theorems show sufficient conditions for the boundedness of $\mathcal{I}_{\alpha,m}$, ${\mathcal{I}_{\alpha, m,\prod \vec b }}$, ${{\mathcal I}_{\alpha ,m,\sum {\vec b} }}$. Similarly, we give the necessary and sufficient conditions for the boundedness of the above operators as follows whose proofs are omited.
\begin{thm}\label{thm10}
	Let $m\geq 2$, $1\leq p_i<\infty$, $i=1,2,\ldots,m$ with $\frac{1}{p} = \sum\limits_{i = 1}^m {\frac{1}{{{p_i}}}}$, $\frac{{{\alpha _i}}}{n}{\rm{ = }}\frac{1}{{{p_i}}}{\rm{ - }}\frac{1}{{{q_i}}} \in (0,1)$, $\alpha  = \sum\limits_{i = 1}^m {{\alpha _i}}$, $\frac{1}{q} = \sum\limits_{i = 1}^m {\frac{1}{{{q_i}}}}$, $\vec{\omega} \in {A_{\vec P,q}}$ with ${\omega _i}^{{q_i}} \in {A_\infty }$, $i = 1, \ldots ,m$, ${\varphi _i} \in \mathcal{G}_\omega ^p$, $i = 1, \ldots ,m$, $({{\vec \varphi }_1},{\varphi _2})$ is a group of non-negative measurable functions and ${{\vec \varphi }_1}$ satisfy the condition:
	\begin{equation}
	{\left[ {{{\vec \varphi }_1}} \right]_B}: = \mathop {\sup }\limits_{x \in {\rn},r > 0} {\left( {{r^\alpha }\prod\limits_{i = 1}^m {{\varphi _{1i}}(x,r)} } \right)^{{\rm{ - }}1}}\int_r^\infty  {\frac{{\mathop {{\rm{essinf}}}\limits_{t < \eta  < \infty } \prod\limits_{i = 1}^m {{\varphi _{1i}}(x,\eta ){{\left\| {{\omega _i}} \right\|}_{{L^{{p_i}}}(D(x,\eta ))}}} }}{{\prod\limits_{i = 1}^m {{{\left\| {{\omega _i}} \right\|}_{{L^{{q_i}}}(D(x,t))}}} }}\frac{{dt}}{t}}  < \infty.
	\end{equation}
	\begin{enumerate}[(i)]
		\item If $\mathop {\min }\limits_{1 \le k \le m} \{ {p_k}\}  > 1$, then
		\begin{equation*}
		{{\mathcal{I}}_{\alpha,m} } \in B\left( {\prod\limits_{i = 1}^m {{M^{{p_i},{\varphi _{1i}}}}\left( {{\omega _i}^{{p_i}}} \right)}  \to {M^{q,{\varphi _2}}}\left( {{u_{\vec \omega }}^q} \right)} \right) \Leftrightarrow \mathop {\sup }\limits_{x \in {\rn},r > 0} {r^\alpha }{\varphi _2}{(x,r)^{ - 1}}\prod\limits_{i = 1}^m {{\varphi _{1i}}(x,r)}  < \infty.
		\end{equation*}
		\item If $\mathop {\min }\limits_{1 \le k \le m} \{ {p_k}\}  = 1$, then 		\begin{equation*}
		{{\mathcal I}_{\alpha,m} } \in B\left( {\prod\limits_{i = 1}^m {{M^{{p_i},{\varphi _{1i}}}}\left( {{\omega _i}^{{p_i}}} \right)}  \to {WM^{q,{\varphi _2}}}\left( {{u_{\vec \omega }}^q} \right)} \right) \Leftrightarrow \mathop {\sup }\limits_{x \in {\rn},r > 0} {r^\alpha }{\varphi _2}{(x,r)^{ - 1}}\prod\limits_{i = 1}^m {{\varphi _{1i}}(x,r)}  < \infty.
		\end{equation*}
	\end{enumerate}
\end{thm}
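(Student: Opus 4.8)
The plan is to follow the scheme of Theorem~\ref{SCFRM}, replacing the pointwise control of $\mathcal{M}_\alpha$ by the integral machinery of Theorem~\ref{thm7}. Since the hypotheses already contain $[\vec\varphi_1]_B<\infty$ and $\varphi_i\in\mathcal{G}_\omega^p$, the argument splits into the two implications of the stated equivalence; the weak case $\min_{k}p_k=1$ runs in parallel to the strong case, with $M^{q,\varphi_2}$ replaced by $WM^{q,\varphi_2}$ throughout.

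For the sufficiency ($\Leftarrow$), suppose $C_0:=\sup_{x,r}r^\alpha\varphi_2(x,r)^{-1}\prod_{i=1}^m\varphi_{1i}(x,r)<\infty$. The idea is to reduce condition~\eqref{con3} of Theorem~\ref{thm7} to the standing hypothesis $[\vec\varphi_1]_B<\infty$. Writing
$$\varphi_2(x,r)^{-1}=\Big(r^\alpha\varphi_2(x,r)^{-1}\prod_{i=1}^m\varphi_{1i}(x,r)\Big)\Big(r^\alpha\prod_{i=1}^m\varphi_{1i}(x,r)\Big)^{-1}\le C_0\Big(r^\alpha\prod_{i=1}^m\varphi_{1i}(x,r)\Big)^{-1}$$
and inserting this factorization into the definition of $[\vec\varphi_1,\varphi_2]_B$ (the integrand being identical to that of $[\vec\varphi_1]_B$), I obtain $[\vec\varphi_1,\varphi_2]_B\le C_0\,[\vec\varphi_1]_B<\infty$. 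Theorem~\ref{thm7} then yields $\mathcal{I}_{\alpha,m}\in B\big(\prod_i M^{p_i,\varphi_{1i}}(\omega_i^{p_i})\to M^{q,\varphi_2}(u_{\vec\omega}^q)\big)$, which is exactly the asserted boundedness.

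For the necessity ($\Rightarrow$), the strategy is to test the boundedness on characteristic functions of balls. Fix $B_0=B(x_0,r_0)$ and put $f_i=\chi_{B_0}$ for $i=1,\dots,m$. For $x\in B_0$ and $y_i\in B_0$ one has $|x-y_1|+\cdots+|x-y_m|\lesssim r_0$, whence the elementary lower bound
$$\mathcal{I}_{\alpha,m}(\vec f)(x)=\int_{(B_0)^m}\frac{d\vec y}{(|x-y_1|+\cdots+|x-y_m|)^{mn-\alpha}}\gtrsim\frac{|B_0|^m}{r_0^{mn-\alpha}}\approx r_0^\alpha,\qquad x\in B_0.$$
Evaluating the target Morrey norm at the single ball $B_0$ and using this pointwise bound gives
$$\big\|\mathcal{I}_{\alpha,m}(\vec f)\big\|_{M^{q,\varphi_2}(u_{\vec\omega}^q)}\ge\varphi_2(x_0,r_0)^{-1}u_{\vec\omega}^q(B_0)^{-1/q}\big\|\mathcal{I}_{\alpha,m}(\vec f)\big\|_{L^q(B_0,u_{\vec\omega}^q)}\gtrsim r_0^\alpha\,\varphi_2(x_0,r_0)^{-1}.$$
On the other hand, the boundedness hypothesis and the norm computation for the test functions give $\|\mathcal{I}_{\alpha,m}(\vec f)\|_{M^{q,\varphi_2}(u_{\vec\omega}^q)}\lesssim\prod_i\|\chi_{B_0}\|_{M^{p_i,\varphi_{1i}}(\omega_i^{p_i})}$. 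Combining the two estimates and taking the supremum over all $x_0\in\rn$ and $r_0>0$ produces the right-hand condition $\sup_{x,r}r^\alpha\varphi_2(x,r)^{-1}\prod_i\varphi_{1i}(x,r)<\infty$.

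The crux, and the main obstacle, is the precise evaluation $\|\chi_{B_0}\|_{M^{p_i,\varphi_{1i}}(\omega_i^{p_i})}\approx\varphi_{1i}(x_0,r_0)^{-1}$. The lower bound $\gtrsim\varphi_{1i}(x_0,r_0)^{-1}$ is immediate by testing the Morrey norm at $B_0$ itself. For the matching upper bound one must control $\varphi_{1i}(x,r)^{-1}\big(\omega_i^{p_i}(B_0\cap B(x,r))/\omega_i^{p_i}(B(x,r))\big)^{1/p_i}$ uniformly in $(x,r)$; here the hypothesis $\varphi_i\in\mathcal{G}_\omega^p$ is exactly what is needed, with condition~(1) handling the regime $r\le r_0$ and condition~(2), together with the almost-decreasing property and the $A_\infty$/doubling properties of $\omega_i^{q_i}$, handling the regime $r>r_0$, where the weight ratio must be shown to decay faster than $\varphi_{1i}(x,r)^{-1}$ grows. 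Once this comparison is established, the remaining steps are purely formal, and the weak-type case is obtained with the same test functions after replacing the strong $L^q$ lower bound by the corresponding weak-$L^q$ distributional estimate.
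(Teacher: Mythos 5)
Your proposal is correct and matches the paper's intended argument: the paper explicitly omits the proof of Theorem \ref{thm10}, indicating it parallels Theorems \ref{SCFRM}--\ref{CCFRM2}, whose written proof (for Theorem \ref{CCFRM1}) runs exactly as yours does --- sufficiency by observing that $\sup_{x,r} r^\alpha\varphi_2(x,r)^{-1}\prod_{i=1}^m\varphi_{1i}(x,r)<\infty$ converts the standing hypothesis $[\vec\varphi_1]_B<\infty$ into condition (\ref{con3}) so that Theorem \ref{thm7} applies, and necessity by testing the boundedness on $\vec\chi_{B_0}$ together with the pointwise lower bound $\mathcal{I}_{\alpha,m}(\vec\chi_{B_0})\gtrsim r_0^\alpha$ on $B_0$. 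The only remark is that the characteristic-function norm evaluation $\|\chi_{B_0}\|_{M^{p_i,\varphi_{1i}}(\omega_i^{p_i})}\approx\varphi_{1i}(x_0,r_0)^{-1}$, which you flag as the main obstacle and begin to re-derive from the definition of $\mathcal{G}_\omega^p$, is precisely the paper's Lemma \ref{Gu7} and can simply be cited.
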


\begin{thm}\label{CCFRI1}
	Let $m\geq 2$, $1< p_i<\infty$, $i=1,2,\ldots,m$ with $\frac{1}{p} = \sum\limits_{i = 1}^m {\frac{1}{{{p_i}}}}$, $\frac{{{\alpha _i}}}{n}{\rm{ = }}\frac{1}{{{p_i}}}{\rm{ - }}\frac{1}{{{q_i}}} \in (0,1)$, $\alpha  = \sum\limits_{i = 1}^m {{\alpha _i}}$, $\frac{1}{q} = \sum\limits_{i = 1}^m {\frac{1}{{{q_i}}}}$, $\vec{\omega}\in {A_{\vec P,q}}$ with ${\omega _i}^{{q_i}} \in {A_\infty }$, $i = 1, \cdots ,m$. Set $\vec b \in {\left( {\rm BMO} \right)^m}$, ${\varphi _i} \in \mathcal{G}_\omega ^p$, $i = 1, \ldots ,m$, $({{\vec \varphi }_1},{\varphi _2})$ is a group of non-negative measurable functions and ${{\vec \varphi }_1}$ satisfy the condition:
	\begin{align}
	\left[ {{{\vec \varphi }_1}} \right]_B^{\prime }: =& \mathop {\sup }\limits_{x \in {\rn},r > 0} {\left( {\frac{{{{\left\| {\prod\limits_{i = 1}^m {({b_j} - {{({b_j})}_{B(x,r)}})} } \right\|}_{{L^q}{{(B(x,r),u_{\vec \omega }^q)}}}}}}{{{{\left\| {u_{\vec \omega }^{}} \right\|}_{{L^q}(B(x,r))}}\prod\limits_{i = 1}^m {{{\left\| {{b_i}} \right\|}_{\rm BMO}}} }}{r^\alpha }\prod\limits_{i = 1}^m {{\varphi _{1i}}(x,r)} } \right)^{ - 1}}\notag\\
	\times& \int_r^\infty  {{{\left( {1 + \log \frac{t}{r}} \right)}^m}\frac{{\mathop {{\rm{essinf}}}\limits_{t < \eta  < \infty } \prod\limits_{i = 1}^m {{\varphi _{1i}}(x,\eta ){{\left\| {{\omega _i}} \right\|}_{{L^{{p_i}}}(D(x,\eta ))}}} }}{{\prod\limits_{i = 1}^m {{{\left\| {{\omega _i}} \right\|}_{{L^{{q_i}}}(D(x,t))}}} }}\frac{{dt}}{t}}  < \infty.
	\end{align}
	Then we have
	\begin{align}
	&{\left\| {\mathcal{I}_{\alpha,m,\prod \vec b }} \right\|_{\prod\limits_{i = 1}^m {{M^{{p_i},{\varphi _{1i}}}}\left( {{\omega _i}^{{p_i}}} \right)}  \to {M^{q,{\varphi _2}}}\left( {{u_{\vec \omega }}^q} \right)}} \lesssim \prod\limits_{i = 1}^m {{{\left\| {{b_i}} \right\|}_{\rm BMO}}}\notag\\
	\Leftrightarrow& \mathop {\sup }\limits_{x \in {\rn},r > 0} {\varphi _2}{(x,r)^{ - 1}}\frac{{{{\left\| {\prod\limits_{i = 1}^m {({b_j} - {{({b_j})}_{B(x,r)}})} } \right\|}_{{L^q}{(B(x,r),u_{\vec \omega }^q)}}}}}{{{{\left\| {u_{\vec \omega }^{}} \right\|}_{{L^q}(B(x,r))}}\prod\limits_{i = 1}^m {{{\left\| {{b_i}} \right\|}_{\rm BMO}}} }}{r^\alpha }\prod\limits_{i = 1}^m {{\varphi _{1i}}(x,r)}  < \infty.
	\end{align}
\end{thm}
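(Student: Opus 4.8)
The plan is to establish the equivalence by proving the two implications separately, following the scheme of the maximal commutator characterization in Theorem \ref{CCFRM1} and using the sufficiency result of Theorem \ref{CFRI1} as a black box. Throughout I write $B_0=B(x_0,r_0)$ and abbreviate $u=u_{\vec\omega}$.

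For the sufficiency ($\Leftarrow$), I would introduce the auxiliary weight function
\[
\tilde\varphi_2(x,r):=\frac{\big\|\prod_{i=1}^m\big(b_i-(b_i)_{B(x,r)}\big)\big\|_{L^q(B(x,r),u^q)}}{\|u\|_{L^q(B(x,r))}\prod_{i=1}^m\|b_i\|_{\rm BMO}}\,r^\alpha\prod_{i=1}^m\varphi_{1i}(x,r),
\]
and observe that the standing hypothesis $[\vec\varphi_1]_B^{\prime}<\infty$ is exactly condition \eqref{con4} with $\varphi_2$ replaced by $\tilde\varphi_2$, i.e. $[\vec\varphi_1,\tilde\varphi_2]_B^{\prime}<\infty$. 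Hence Theorem \ref{CFRI1} applies with target space $M^{q,\tilde\varphi_2}(u^q)$ and gives $\|\mathcal I_{\alpha,m,\prod\vec b}(\vec f)\|_{M^{q,\tilde\varphi_2}(u^q)}\lesssim \prod_{i=1}^m\|b_i\|_{\rm BMO}\prod_{i=1}^m\|f_i\|_{M^{p_i,\varphi_{1i}}(\omega_i^{p_i})}$. The right-hand side condition of the equivalence is precisely $\sup_{x,r}\varphi_2(x,r)^{-1}\tilde\varphi_2(x,r)<\infty$, i.e. $\tilde\varphi_2\lesssim\varphi_2$; since $\tilde\varphi_2\lesssim\varphi_2$ forces $\|\cdot\|_{M^{q,\varphi_2}(u^q)}\lesssim\|\cdot\|_{M^{q,\tilde\varphi_2}(u^q)}$ by monotonicity of the Morrey norm in its second index, the claimed bound into $M^{q,\varphi_2}(u^q)$ follows with the correct dependence on $\prod_i\|b_i\|_{\rm BMO}$.

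For the necessity ($\Rightarrow$), I would run a test-function argument. Fix $B_0$ and take $f_i=\chi_{B_0}$. The hypotheses $\varphi_i\in\mathcal G_\omega^p$ (the almost-decreasing property (1) together with the weighted scaling property (2)) are tailored so that $\|\chi_{B_0}\|_{M^{p_i,\varphi_{1i}}(\omega_i^{p_i})}\approx\varphi_{1i}(x_0,r_0)^{-1}$, which controls the input norm by $\prod_i\varphi_{1i}(x_0,r_0)^{-1}$. On the output side, using $u^q(B_0)^{1/q}=\|u\|_{L^q(B_0)}$, one has
\[
\|\mathcal I_{\alpha,m,\prod\vec b}(\vec\chi_{B_0})\|_{M^{q,\varphi_2}(u^q)}\ge \varphi_2(x_0,r_0)^{-1}\|u\|_{L^q(B_0)}^{-1}\|\mathcal I_{\alpha,m,\prod\vec b}(\vec\chi_{B_0})\|_{L^q(B_0,u^q)}.
\]
If one can show $\|\mathcal I_{\alpha,m,\prod\vec b}(\vec\chi_{B_0})\|_{L^q(B_0,u^q)}\gtrsim r_0^\alpha\big\|\prod_i(b_i-(b_i)_{B_0})\big\|_{L^q(B_0,u^q)}$, then feeding this into the boundedness hypothesis $\|\mathcal I_{\alpha,m,\prod\vec b}\|\lesssim\prod_i\|b_i\|_{\rm BMO}$ and rearranging yields exactly the right-hand side finiteness condition after taking the supremum over all $B_0$.

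The hard part will be this lower bound, which is genuinely more delicate than in the maximal case. The maximal commutator has the decoupled, non-negative form $|B|^{\alpha/n-m}\prod_i\int_B|b_i(x)-b_i(y_i)|\,dy_i$, so the estimate is immediate from $\int_B|b_i(x)-b_i(y_i)|\,dy_i\ge|B|\,|b_i(x)-(b_i)_{B_0}|$. For the integral operator the kernel $(\sum_i|x-y_i|)^{\alpha-mn}$ couples the variables, so I would instead expand $\prod_i(b_i(x)-b_i(y_i))=\prod_i\big((b_i(x)-(b_i)_{B_0})-(b_i(y_i)-(b_i)_{B_0})\big)$: the leading term factors out $\prod_i(b_i(x)-(b_i)_{B_0})$ times $\mathcal I_{\alpha,m}(\vec\chi_{B_0})(x)\approx r_0^\alpha$ for $x\in B_0$, while the remaining terms each carry at least one oscillation factor $b_i(y_i)-(b_i)_{B_0}$ integrated against the coupling kernel. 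Controlling these lower-order coupling terms — showing they cannot cancel the main term in $L^q(B_0,u^q)$, e.g. via a reverse-H\"older/$A_\infty$ estimate for $u^q$ or by restricting to a subset of $B_0$ on which the signs of the oscillations are favorable — is the technical crux, and is precisely the step the analogous maximal-operator proof of Theorem \ref{CCFRM1} bypasses.
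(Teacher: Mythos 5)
Your overall architecture matches the proof the paper intends: the paper in fact omits the proof of Theorem \ref{CCFRI1} entirely, declaring it analogous to the proof of Theorem \ref{CCFRM1}, which establishes sufficiency by invoking the corresponding boundedness theorem and necessity by testing on $\vec\chi_{B_0}$ together with Lemma \ref{Gu7}. Your sufficiency half is complete and correct, and slightly cleaner than the paper's implicit route: observing that $[\vec\varphi_1]_B^{\prime}$ is exactly $[\vec\varphi_1,\tilde\varphi_2]_B^{\prime}$, applying Theorem \ref{CFRI1} with target $M^{q,\tilde\varphi_2}(u_{\vec\omega}^q)$, and then passing to $M^{q,\varphi_2}(u_{\vec\omega}^q)$ via $\tilde\varphi_2\lesssim\varphi_2$ is equivalent to verifying \eqref{con4} for $\varphi_2$ directly from the product of the two finiteness conditions; both are legitimate.

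The necessity half, however, contains a genuine gap, which you yourself flag but do not close: the lower bound $\|\mathcal{I}_{\alpha,m,\Pi\vec b}(\vec\chi_{B_0})\|_{L^q(B_0,u_{\vec\omega}^q)}\gtrsim r_0^\alpha\big\|\prod_{i=1}^m(b_i-(b_i)_{B_0})\big\|_{L^q(B_0,u_{\vec\omega}^q)}$ is stated as the needed ingredient but never proved, and your sketched expansion into a main term plus coupling terms does not show the coupling terms cannot cancel the main one. Your diagnosis of why this is harder than the maximal case is accurate: the paper's ``simple calculation'' in the proof of Theorem \ref{CCFRM1} is legitimate only because $\mathcal{M}_{\alpha,\Pi\vec b}$ carries $|(b_i(x)-b_i(y_i))f_i(y_i)|$ inside its definition, so Jensen's inequality gives $\int_{B_0}|b_i(x)-b_i(y_i)|\,dy_i\ge|B_0|\,|b_i(x)-(b_i)_{B_0}|$ factor by factor; for the signed kernel of $\mathcal{I}_{\alpha,m,\Pi\vec b}$ the pointwise analogue can genuinely fail, already for $m=1$, since $\int_{B_0}(b(x)-b(y))|x-y|^{\alpha-n}\,dy$ can vanish at points where $b(x)\ne(b)_{B_0}$ (the kernel-weighted mean of $b$ need not equal the plain mean). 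The standard repairs — restricting to sign-definite configurations via medians, e.g. testing with $f_i=\chi_{F_i}$ where $F_i=\{y\in B_0: b_i(y)\le m_{b_i}(B_0)\}$ and estimating on $\bigcap_i\{x\in B_0: b_i(x)\ge m_{b_i}(B_0)\}$, or a Janson-type kernel expansion — yield a lower bound featuring medians and restricted sets in place of $(b_i)_{B_0}$ and $B_0$, and converting that into the precise quantity $\big\|\prod_{i=1}^m(b_i-(b_i)_{B_0})\big\|_{L^q(B_0,u_{\vec\omega}^q)}$ appearing in the theorem requires further argument (comparability of the measures of the sign sets with $|B_0|$, control of the intersection, and an $A_\infty$ estimate for $u_{\vec\omega}^q$) that your proposal does not supply. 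As written, you have established only the $\Leftarrow$ implication; to be fair, the paper itself silently bypasses exactly this point by omitting the proof.
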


\begin{thm}\label{CCFIM2}
	Let $m\geq 2$, $1< p_i<\infty$, $i=1,2,\ldots,m$ with $\frac{1}{p} = \sum\limits_{i = 1}^m {\frac{1}{{{p_i}}}}$, $\frac{{{\alpha _i}}}{n}{\rm{ = }}\frac{1}{{{p_i}}}{\rm{ - }}\frac{1}{{{q_i}}} \in (0,1)$, $\alpha  = \sum\limits_{i = 1}^m {{\alpha _i}}$, $\frac{1}{q} = \sum\limits_{i = 1}^m {\frac{1}{{{q_i}}}}$, $\vec{\omega}\in {A_{\vec P,q}}$ with ${\omega _i}^{{q_i}} \in {A_\infty }$, $i = 1, \ldots ,m$. Set $\vec b \in {\left( {\rm BMO} \right)^m}$, ${\varphi _i} \in \mathcal{G}_\omega ^p$, $i = 1, \cdots ,m$, $({{\vec \varphi }_1},{\varphi _2})$ is a group of non-negative measurable functions and ${{\vec \varphi }_1}$ satisfy the condition:
	\begin{align}
	\left[ {{{\vec \varphi }_1}} \right]_B^{\prime \prime }: =& \mathop {\sup }\limits_{x \in {\rn},r > 0} {\left( {\frac{{{{\left\| {{b_j} - {{({b_j})}_{B(x,r)}}} \right\|}_{{L^q}{(B(x,r),u_{\vec \omega }^q)}}}}}{{{{\left\| {u_{\vec \omega }^{}} \right\|}_{{L^q}(B(x,r))}}{{\left\| {{b_j}} \right\|}_{\rm BMO}}}}{r^\alpha }\prod\limits_{i = 1}^m {{\varphi _{1i}}(x,r)} } \right)^{-1}}\notag\\
	\times& \int_r^\infty  {{{\left( {1 + \log \frac{t}{r}} \right)}}\frac{{\mathop {{\rm{essinf}}}\limits_{t < \eta  < \infty } \prod\limits_{i = 1}^m {{\varphi _{1i}}(x,\eta ){{\left\| {{\omega _i}} \right\|}_{{L^{{p_i}}}(D(x,\eta ))}}} }}{{\prod\limits_{i = 1}^m {{{\left\| {{\omega _i}} \right\|}_{{L^{{q_i}}}(D(x,t))}}} }}\frac{{dt}}{t}}  < \infty.
	\end{align}
	Then we have
	\begin{align}
	&{\left\| {{\mathcal I}_{_{\alpha ,m\sum {\vec b} }}^j} \right\|_{\prod\limits_{i = 1}^m {{M^{{p_i},{\varphi _{1i}}}}\left( {{\omega _i}^{{p_i}}} \right)}  \to {M^{q,{\varphi _2}}}\left( {{u_{\vec \omega }}^q} \right)}} \lesssim {\left\| {{b_j}} \right\|_{\rm BMO}}\notag\\
	\Leftrightarrow& \mathop {\sup }\limits_{x \in {\rn},r > 0} {\varphi _2}{(x,r)^{ - 1}}\frac{{{{\left\| {{b_j} - {{({b_j})}_{B(x,r)}}} \right\|}_{{L^q}{(B(x,r),u_{\vec \omega }^q)}}}}}{{{{\left\| {u_{\vec \omega }^{}} \right\|}_{{L^q}(B(x,r))}}{{\left\| {{b_j}} \right\|}_{\rm BMO}}}}{r^\alpha }\prod\limits_{i = 1}^m {{\varphi _{1i}}(x,r)}  < \infty.
	\end{align}
	Moreover, if the above conditions hold, we have
	$${\left\| {{{\mathcal I}_{\alpha ,\sum {\vec b} }}} \right\|_{\prod\limits_{i = 1}^m {{M^{{p_i},{\varphi _{1i}}}}\left( {{\omega _i}^{{p_i}}} \right)}  \to {M^{q,{\varphi _2}}}\left( {{u_{\vec \omega }}^q} \right)}} \lesssim \mathop {\max }\limits_{1 \le i \le m} {\left\| {{b_i}} \right\|_{\rm BMO}}.$$
\end{thm}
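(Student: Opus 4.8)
The plan is to follow the two–step scheme already used for the characterization Theorems \ref{thm10} and \ref{CCFRM2}: the sufficiency direction will be reduced to the sufficient–condition Theorem \ref{CFRI2}, while the necessity direction will be obtained by a test–function argument exploiting that $\varphi_{1i}\in\mathcal G_\omega^p$. Throughout I write $B=B(x,r)$ and abbreviate the common essinf–quotient inside the integrals by $\Phi(x,t):=\dfrac{\mathop{{\rm essinf}}\limits_{t<\eta<\infty}\prod_{i=1}^m\varphi_{1i}(x,\eta)\|\omega_i\|_{L^{p_i}(D(x,\eta))}}{\prod_{i=1}^m\|\omega_i\|_{L^{q_i}(D(x,t))}}$.

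For sufficiency ($\Leftarrow$), I would show that the hypothesis $[\vec\varphi_1]_B''<\infty$ together with the displayed supremum condition forces the condition $[\vec\varphi_1,\varphi_2]_B''<\infty$ of Theorem \ref{CFRI2}. Unravelling the definition of $[\vec\varphi_1]_B''$ gives, for every $x$ and $r$,
\begin{equation*}
\int_r^\infty\Bigl(1+\log\tfrac{t}{r}\Bigr)\Phi(x,t)\,\frac{dt}{t}\le [\vec\varphi_1]_B''\,\frac{\|b_j-(b_j)_{B}\|_{L^q(B,u_{\vec\omega}^q)}}{\|u_{\vec\omega}\|_{L^q(B)}\,\|b_j\|_{\rm BMO}}\,r^\alpha\prod_{i=1}^m\varphi_{1i}(x,r).
\end{equation*}
Multiplying by $\varphi_2(x,r)^{-1}$ and taking the supremum over $(x,r)$, the assumed supremum condition yields $[\vec\varphi_1,\varphi_2]_B''<\infty$; Theorem \ref{CFRI2} then delivers the boundedness of $\mathcal I_{\alpha,m,\sum\vec b}^{\,j}$ with the claimed bound $\lesssim\|b_j\|_{\rm BMO}$.

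For necessity ($\Rightarrow$), assume the boundedness and test against $f_i=\chi_{B_0}$, $B_0=B(x_0,r_0)$. For $x\in B_0$ the kernel satisfies $(\sum_i|x-y_i|)^{-(mn-\alpha)}\gtrsim r_0^{-(mn-\alpha)}$ when $y_i\in B_0$, so $\mathcal I_{\alpha,m}(\chi_{B_0},\dots,\chi_{B_0})(x)\gtrsim r_0^\alpha$; writing $b_j(x)-b_j(y_j)=\bigl(b_j(x)-(b_j)_{B_0}\bigr)-\bigl(b_j(y_j)-(b_j)_{B_0}\bigr)$ and using the cancellation $\int_{B_0}\!\bigl(b_j(y_j)-(b_j)_{B_0}\bigr)\,dy_j=0$ to absorb the cross term, I expect the lower bound
\begin{equation*}
\bigl\|\mathcal I_{\alpha,m,\sum\vec b}^{\,j}(\chi_{B_0},\dots,\chi_{B_0})\bigr\|_{L^q(B_0,u_{\vec\omega}^q)}\gtrsim r_0^\alpha\,\bigl\|b_j-(b_j)_{B_0}\bigr\|_{L^q(B_0,u_{\vec\omega}^q)}.
\end{equation*}
Dividing by $\varphi_2(x_0,r_0)\|u_{\vec\omega}\|_{L^q(B_0)}$ bounds the target Morrey norm from below, whereas properties (1)–(2) of $\mathcal G_\omega^p$ give $\|\chi_{B_0}\|_{M^{p_i,\varphi_{1i}}(\omega_i^{p_i})}\approx\varphi_{1i}(x_0,r_0)^{-1}$, so the boundedness hypothesis gives an upper bound $\lesssim\|b_j\|_{\rm BMO}\prod_i\varphi_{1i}(x_0,r_0)^{-1}$. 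Comparing the two bounds and taking the supremum over all $B_0$ produces exactly the asserted supremum condition.

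Finally, the ``moreover'' statement follows by summation: since $\mathcal I_{\alpha,m,\sum\vec b}=\sum_{j=1}^m\mathcal I_{\alpha,m,\sum\vec b}^{\,j}$, the triangle inequality and the per–$j$ bound yield $\|\mathcal I_{\alpha,m,\sum\vec b}\|\lesssim\sum_{j=1}^m\|b_j\|_{\rm BMO}\lesssim\max_{1\le i\le m}\|b_i\|_{\rm BMO}$, $m$ being fixed. The main obstacle is the integral lower bound in the necessity step: the cross term $\int_{B_0^m}\bigl(b_j(y_j)-(b_j)_{B_0}\bigr)K\,d\vec y$ is a priori of the same order as the main term, so the cancellation $\int_{B_0}(b_j-(b_j)_{B_0})=0$ must be combined with the controlled oscillation of the kernel over $B_0$ to ensure the leading term $\bigl(b_j(x)-(b_j)_{B_0}\bigr)\mathcal I_{\alpha,m}(\chi_{B_0},\dots)(x)$ is not annihilated; this, rather than the reduction in the sufficiency step, is where the real work lies.
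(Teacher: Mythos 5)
Your architecture is exactly the one the paper intends: the paper omits the proof of this theorem (``whose proofs are omited''), declaring it similar to the maximal-operator characterizations, of which only Theorem \ref{CCFRM1} is actually proved --- and that proof consists of precisely your two steps: sufficiency by absorbing the displayed supremum condition into $[\vec\varphi_1]_B''$ so as to recover the hypothesis of the corresponding sufficiency theorem (there Theorem \ref{CFRM}, here Theorem \ref{CFRI2}), and necessity by testing on $\vec{\chi}_{B_0}=(\chi_{B_0},\dots,\chi_{B_0})$ and invoking Lemma \ref{Gu7} for $\|\chi_{B_0}\|_{M^{p_i,\varphi_{1i}}(\omega_i^{p_i})}\approx\varphi_{1i}(x_0,r_0)^{-1}$. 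Your sufficiency reduction is correct and complete, as is the ``moreover'' step by summing over $j$.

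The difficulty you flag in the necessity direction is, however, a genuine gap, and your proposal does not close it. Writing $b_j(x)-b_j(y_j)=(b_j(x)-(b_j)_{B_0})-(b_j(y_j)-(b_j)_{B_0})$, the cross term is $\mathcal I_{\alpha,m}(\chi_{B_0},\dots,(b_j-(b_j)_{B_0})\chi_{B_0},\dots,\chi_{B_0})$, and the mean-zero property of $b_j-(b_j)_{B_0}$ gives no usable cancellation against the singular kernel $(\sum_i|x-y_i|)^{-(mn-\alpha)}$: a dyadic decomposition around $x\in B_0$ shows only that this term is pointwise $O\bigl(r_0^\alpha\|b_j\|_{\rm BMO}\bigr)$. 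Since Lemma \ref{Gu1} gives $\|b_j-(b_j)_{B_0}\|_{L^q(B_0,u_{\vec\omega}^q)}\lesssim\|b_j\|_{\rm BMO}\|u_{\vec\omega}\|_{L^q(B_0)}$ with no reverse inequality, this error can dominate your intended main term $r_0^\alpha\|b_j-(b_j)_{B_0}\|_{L^q(B_0,u_{\vec\omega}^q)}$ on balls where $b_j$ oscillates little relative to its global BMO norm; subtracting it therefore yields only that the quantity to be bounded is $\lesssim 1+\sup_{x,r}\varphi_2(x,r)^{-1}r^\alpha\prod_{i=1}^m\varphi_{1i}(x,r)$, and that latter supremum is not among the hypotheses --- it is the strictly stronger condition characterizing $\mathcal I_{\alpha,m}$ itself in Theorem \ref{thm10} (strictly stronger because the $b_j$-ratio in the target is $\lesssim1$), so the argument becomes circular. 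This is exactly the point where the maximal-operator proof the paper does write out is easier: $\mathcal M^j_{\alpha,\sum\vec b}$ carries $|b_j(x)-b_j(y_j)|$ inside the supremum, so averaging in $y_j$ gives $r^\alpha|b_j(x)-(b_j)_B|\lesssim\mathcal M^j_{\alpha,\sum\vec b}(\vec\chi_B)(x)$ with no cross term at all, which is why ``simple calculations'' suffice there. Transplanting this to $\mathcal I^j_{\alpha,m,\sum\vec b}$ requires an additional idea --- e.g., a signed or selected choice of the $j$-th test function, or a positivity argument exploiting the near-constancy of the kernel on the region where $|b_j(x)-(b_j)_{B_0}|$ is large --- which neither your sketch nor the paper (whose proof is omitted) supplies; as written, your necessity step is a plan with an acknowledged missing lemma, not a proof.
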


\begin{rem}
	In the case $m=1$, the above results in this topic also were proved in \cite{Gu8}.
\end{rem}

\begin{rem}
	Due to ${\mathcal{M}_\alpha }(\vec f)(x) { \lesssim _{n,m,\alpha }} {m^{mn - \alpha }}{\mathcal{I}_{\alpha,m}}(\left| {{f_1}} \right|, \ldots ,\left| {{f_m}} \right|)(x)$, Theorems $\ref{thm7}$, $\ref{CFRI1}$, $\ref{CFRI2}$, $\ref{thm10}$, $\ref{CCFRI1}$, $\ref{CCFIM2}$ are still valid for $\mathcal{M}_\alpha$, ${\mathcal{M}_{\alpha, \prod \vec b }}$ and ${{\mathcal M}_{\alpha ,\sum {\vec b} }}$ respectively. In other words, we obtain two different characterizations for the boundedness of multilinear fractional maximal operators and their commutators on generalized weighted Morrey spaces under different conditions.
\end{rem}

\begin{rem}
	All of the above results still hold for the (weak) generalized local weighted Morrey spaces $M_{{x_0}}^{p,\varphi }(\omega )$ $(WM_{{x_0}}^{p,\varphi }(\omega ))$ (the definitions see Section $\ref{sec2}$), which
	just need to be modified the descriptions where necessary.
\end{rem}

The paper is organized as follows. In Section $\ref{sec2}$, we give some notations and several basic results which will play a crucial role in the sequel. The proof of
Theorems $\ref{FRM}$--$\ref{CCFIM2}$  are presented in Section \ref{sec3}.  In Section $\ref{sec4}$, we will give some applactions to Sobolev embedding theorem and the second-order partial differential operator.

Throughout this paper, we let $C$ denote constants that are independent of the main parameters involved but whose value may differ from line to line. Given a ball $B$ and $\lambda>0$, $\lambda B$ denotes the ball with the same center as $B$ whose radius is $\lambda$ times that of $B$. A weight function $\omega$ is a nonnegative locally integrable function on $\mathbb R^n$ that takes values in $(0,\infty)$ almost everywhere. For a given weight function $\omega$ and a measurable set $E$, we denote the Lebesgue measure of $E$ by $|E|$ and the weighted measure of $E$ by $\omega(E)$, where $\omega(E)=\int_E \omega(x)\,dx$. By $A\lesssim B$ we mean that $A\leq CB$ with some positive constant $C$ independent of appropriate quantities. By $ A \approx B$, we mean that $A\lesssim B$ and $B\lesssim A$.

\section{Some Notation and Basic Results}\label{sec2}

We first recall some standard definitions and notations.  Let $1 \le p<\infty$, $\omega$ be a weight function on $\mathbb R^n$, $\varphi$ be a positive measurable function on $\rn \times \left( {0,\infty } \right)$. The generalized local weighted Morrey spaces are defined by
\begin{equation*}
M_{{x_0}}^{p,\varphi }(\omega ) = \{ f:{\left\| f \right\|_{M_{{x_0}}^{p,\varphi }(\omega )}}: = \mathop {\sup }\limits_{r > 0} \varphi {({x_0},r)^{ - 1}}\omega {(B({x_0},r))^{ - \frac{1}{p}}}{\left\| f \right\|_{{L^p}(B({x_0},r),\omega dx)}} < \infty \}.
\end{equation*}
The weak generalized local weighted Morrey spaces are defined by
\begin{equation*}
WM_{{x_0}}^{p,\varphi }(\omega ) = \{ f:{\left\| f \right\|_{WM_{{x_0}}^{p,\varphi }(\omega )}}: = \mathop {\sup }\limits_{r > 0} \varphi {({x_0},r)^{ - 1}}\omega {(B({x_0},r))^{ - \frac{1}{p}}}{\left\| f \right\|_{{WL^p}(B({x_0},r),\omega dx)}} < \infty \}.
\end{equation*}

The classical $A_p$ weight theory was introduced by Muckenhoupt which studied weighted $L^p$ boundedness of Hardy-Littlewood maximal functions, one can see Chapter 7 in \cite{Gra1}.
\begin{defn}[\cite{Gra1}]
	We say that a weight $\omega\in A_p$,\,$1<p<\infty$, if
	$$\left(\frac1{|B|}\int_B \omega(x)\,dx\right)\left(\frac1{|B|}\int_B \omega(x)^{-\frac{1}{p-1}}\,dx\right)^{p-1}\le C \quad\mbox{for every ball}\; B\subseteq \mathbb R^n,$$ where $C$ is a positive constant which is independent of $B$.

	We say a weight $\omega\in A_1$, if
	$$\frac1{|B|}\int_B \omega(x)\,dx\le C\mathop {ess\inf }\limits_{x \in B} \omega(x)\quad\mbox{for every ball}\;B\subseteq\mathbb R^n.$$
	We denote $${A_\infty } = \bigcup\limits_{1 \le p < \infty } {{A_p}}.$$
\end{defn}

\begin{lem}[\cite{Gra1}]\label{Gra1.1}
	Let $\omega\in {A_\infty }$. Then, for any ball $B$ and $\lambda>1$, there exists a constant ${C_{n,\lambda }}$ such that
	\begin{equation*}
	\omega (\lambda B) \le {C_{n,\lambda }}\omega (B).
	\end{equation*}
\end{lem}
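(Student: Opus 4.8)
The plan is to exploit the definition ${A_\infty } = \bigcup_{1 \le p < \infty} {A_p}$ to reduce the statement to a single quantitative $A_p$ estimate, and then apply that estimate to the nested pair of balls $B \subseteq \lambda B$. First I would fix $p$ with $1 \le p < \infty$ such that $\omega \in A_p$. Since $A_1 \subseteq A_p$ for every $p > 1$, there is no loss in assuming $p > 1$; this lets me use the $A_p$ condition in its Hölder-dual form with a finite conjugate exponent.

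The key step is to establish the auxiliary comparison that for every ball $Q$ and every measurable $E \subseteq Q$,
$$\left(\frac{|E|}{|Q|}\right)^p \le C\,\frac{\omega(E)}{\omega(Q)},$$
where $C$ depends only on $n$, $p$ and the $A_p$ constant of $\omega$. To obtain this I would write $|E| = \int_E \omega^{1/p}\omega^{-1/p}\,dx$ and apply Hölder's inequality with exponents $p$ and $p' = p/(p-1)$, which yields
$$|E|^p \le \omega(E)\left(\int_E \omega^{-\frac{1}{p-1}}\,dx\right)^{p-1} \le \omega(E)\left(\int_Q \omega^{-\frac{1}{p-1}}\,dx\right)^{p-1}.$$
The $A_p$ condition, applied to the ball $Q$, then bounds $\bigl(\int_Q \omega^{-1/(p-1)}\bigr)^{p-1}$ by $C\,|Q|^{p}/\omega(Q)$: indeed, rewriting the $A_p$ inequality gives $\bigl(\frac{1}{|Q|}\int_Q \omega^{-1/(p-1)}\bigr)^{p-1} \le C\,|Q|/\omega(Q)$, and multiplying by $|Q|^{p-1}$ produces the stated form. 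Substituting this into the previous display and dividing by $|Q|^p$ gives the claimed comparison.

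Finally I would specialize the comparison to $Q = \lambda B$ and $E = B$. Since $|\lambda B| = \lambda^n |B|$, the left-hand side equals $\lambda^{-np}$, so
$$\lambda^{-np} \le C\,\frac{\omega(B)}{\omega(\lambda B)},$$
which rearranges at once to $\omega(\lambda B) \le C\,\lambda^{np}\,\omega(B)$; setting $C_{n,\lambda} := C\,\lambda^{np}$ completes the argument. I do not anticipate a genuine obstacle, as this is the classical doubling property of Muckenhoupt weights and the lemma is quoted from \cite{Gra1}. The only point meriting a little care is the reduction to $p > 1$, so that the conjugate exponent $p'$ is finite and the quantity $\int_Q \omega^{-1/(p-1)}$ is meaningful; this is handled cleanly by the inclusion $A_1 \subseteq A_p$, after which the Hölder step goes through verbatim.
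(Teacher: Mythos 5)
Your proposal is correct and matches the standard argument behind the paper's citation: the lemma is quoted from \cite{Gra1} without proof, and the proof there is precisely your comparison $\left(|E|/|Q|\right)^p \le C\,\omega(E)/\omega(Q)$ for $E\subseteq Q$, specialized to $E=B\subseteq Q=\lambda B$ to give $\omega(\lambda B)\le C\lambda^{np}\omega(B)$. The only remark worth making is that your constant $C_{n,\lambda}$ also depends on $p$ and on the $A_p$ characteristic of $\omega$ (hence on $\omega$ itself), which is consistent with the lemma as stated since $\omega$ is fixed.
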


Now let us recall the definitions of the following multiple weights. 
\begin{defn}[\cite{Moen}]
	Let $q>0$, $p_1,\ldots,p_m\in[1,\infty)$ with $1/p=\sum_{k=1}^m 1/{p_k}$. Given $\vec{\omega}=(\omega_1,\ldots,\omega_m)$, set $u_{\vec{\omega}}=\prod_{i=1}^m \omega_i$. We say that $\vec{\omega}$ satisfies the $A_{\vec{P},q}$ condition if it satisfies
	\begin{equation}
	\mathop {\sup }\limits_B {\left( {\frac{1}{{|B|}}\int_B {{u _{\vec \omega }}} {{(x)}^q}{\mkern 1mu} dx} \right)^{1/q}}\prod\limits_{i = 1}^m {{{\left( {\frac{1}{{|B|}}\int_B {{\omega _i}} {{(x)}^{ - {{{p_i}^\prime }}}}{\mkern 1mu} dx} \right)}^{1/{{{p_i}^\prime }}}}}  < \infty.
	\end{equation}
	when $p_i=1,$ ${\left( {\frac{1}{{|B|}}\int_B {{\omega _i}} {{(x)}^{ - {{p_i}^\prime }}}{\mkern 1mu} dx} \right)^{1/{{p_i}^\prime }}}$ is understood as ${(\mathop {\inf }\limits_{x \in B} {\omega _i}(x))^{ - 1}}$.
\end{defn}

\begin{lem}[\cite{Moen}]\label{lem4}
	Let $0 < \alpha  < mn$, $p_1,\ldots,p_m\in[1,\infty)$, $1/p=\sum_{i=1}^m 1/{p_i}$ and $\frac{1}{q} = \frac{1}{p} - \frac{\alpha }{n}>0$. Then $\vec{\omega}\in A_{\vec{P},q}$, then
	\begin{equation}\label{multi2}
	\left\{
	\begin{aligned}
	&{u_{\vec{\omega}}}^q \in A_{mq},\\
	&\omega_i^{-{{p_i}^\prime }}\in A_{m{{p_i}^\prime }},\quad i=1,\ldots,m,
	\end{aligned}\right.
	\end{equation}
	where $u_{\vec{\omega}}=\prod_{i=1}^m \omega_i$ and the condition $\omega_i^{1-{{p_i}^\prime }}\in A_{m{{p_i}^\prime }}$ in the case $p_i=1$ is understood as $\omega_i^{1/m}\in A_1$.
\end{lem}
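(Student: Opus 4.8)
The plan is to verify each membership directly from the integral $A_p$ test on an arbitrary ball $B$, reducing everything to the single scalar constant
\[
[\vec{\omega}]_{A_{\vec{P},q}} := \sup_{B}\left(\frac{1}{|B|}\int_B u_{\vec{\omega}}^q\,dx\right)^{1/q}\prod_{i=1}^m\left(\frac{1}{|B|}\int_B \omega_i^{-p_i'}\,dx\right)^{1/p_i'},
\]
which is finite by the hypothesis $\vec{\omega}\in A_{\vec{P},q}$. Throughout I would normalize to the probability measure $d\mu_B=dx/|B|$ on $B$, so that the power-mean monotonicity $\|g\|_{L^a(\mu_B)}\le\|g\|_{L^b(\mu_B)}$ for $a\le b$ is available. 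The engine of both inclusions is one generalized Hölder inequality: if $g_j\ge 0$ and $\sum_j 1/s_j\le 1$, then $\frac{1}{|B|}\int_B\prod_j g_j\,dx\le\prod_j\left(\frac{1}{|B|}\int_B g_j^{s_j}\,dx\right)^{1/s_j}$. This follows from ordinary Hölder composed with the monotonicity of $L^a(\mu_B)$-norms, and it is precisely what lets the slack produced by $\alpha>0$ be absorbed.

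For $u_{\vec{\omega}}^q\in A_{mq}$ I would fix $B$ and estimate the dual factor $\frac{1}{|B|}\int_B (u_{\vec{\omega}}^q)^{-1/(mq-1)}=\frac{1}{|B|}\int_B\prod_i\omega_i^{-q/(mq-1)}$. Choosing the Hölder exponents $r_i=p_i'(mq-1)/q$ turns each factor into $\omega_i^{-p_i'}$, and the scaling relation $1/q=\sum_i 1/p_i-\alpha/n$ gives $\sum_i 1/r_i=1-\frac{q\alpha}{n(mq-1)}\le 1$, so the generalized Hölder above applies. Raising the outcome to the power $mq-1$ and multiplying by the first factor $\frac{1}{|B|}\int_B u_{\vec{\omega}}^q$ collapses exactly to $[\vec{\omega}]_{A_{\vec{P},q}}^q$, uniformly in $B$; hence $u_{\vec{\omega}}^q\in A_{mq}$.

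The inclusions $\omega_i^{-p_i'}\in A_{mp_i'}$ are handled by the mirror-image argument. Here the dual factor is $\frac{1}{|B|}\int_B\omega_i^{p_i'/(mp_i'-1)}$, and I would substitute $\omega_i=u_{\vec{\omega}}\prod_{k\ne i}\omega_k^{-1}$ to split it, via Hölder, into a $u_{\vec{\omega}}^q$-integral and the $\omega_k^{-p_k'}$-integrals for $k\ne i$. The matching exponents are $s_0=q(mp_i'-1)/p_i'$ on the $u_{\vec{\omega}}$-piece and $s_k=p_k'(mp_i'-1)/p_i'$ on the others, and again $\sum 1/s=1-\frac{p_i'\alpha}{n(mp_i'-1)}\le 1$. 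Raising to the power $mp_i'-1$ and multiplying by $\frac{1}{|B|}\int_B\omega_i^{-p_i'}$ reassembles $[\vec{\omega}]_{A_{\vec{P},q}}^{p_i'}$, giving the claim.

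The only genuinely delicate point is the endpoint $p_i=1$, where $p_i'=\infty$ and the factor $\left(\frac{1}{|B|}\int_B\omega_i^{-p_i'}\,dx\right)^{1/p_i'}$ must be read as $(\inf_{x\in B}\omega_i(x))^{-1}$, while the target statement degenerates to $\omega_i^{1/m}\in A_1$. For such indices the corresponding Hölder slot becomes an $L^\infty$–$L^1$ pairing, so I would recover the $A_1$ inequality directly, replacing the $i$-th Hölder factor by the trivial bound $\omega_i^{-1}\le(\inf_B\omega_i)^{-1}$ and checking the averaged inequality against $[\vec{\omega}]_{A_{\vec{P},q}}$. I expect this endpoint bookkeeping, rather than the $p_i>1$ computation, to be the main obstacle, since it forces the $L^\infty$-normalized factors to be treated separately and requires confirming that the exponent slack $\sum 1/s\le 1$ persists in the limit.
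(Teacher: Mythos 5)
Your proposal is correct: the paper states this lemma without proof (it is quoted from Moen's article), and your argument is essentially the standard one from that source --- the generalized H\"older inequality on the normalized measure $dx/|B|$, with the exponent slack $\sum_i 1/r_i = 1 - \tfrac{q\alpha}{n(mq-1)} \le 1$ and $\sum 1/s = 1 - \tfrac{p_i'\alpha}{n(mp_i'-1)} \le 1$ supplied by $\alpha>0$, reassembling the $A_{\vec P,q}$ constant to the powers $q$ and $p_i'$ respectively. Both exponent computations check out, and your $L^\infty$ treatment of the endpoint $p_i=1$ (reading the missing factor as $(\operatorname{essinf}_B \omega_i)^{-1}$ and recovering the $A_1$ inequality for $\omega_i^{1/m}$ directly from the $A_{\vec P,q}$ constant raised to the power $1/m$) is sound, so there is nothing substantive to compare beyond noting agreement.
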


\begin{lem}\label{cen4}
	Let $m\in \mathbb{N},$ $q_1,\ldots,q_m\in[1,\infty)$ with $1/q=\sum_{i=1}^m 1/{q_i}$. Assume that $\omega _1^{{q_1}}, \ldots ,\omega _m^{{q_m}} \in {A_\infty }$ and $u_{\vec \omega }^q = \prod\limits_{i = 1}^m {\omega _i^q}  \in {A_\infty }$, then for any ball $B$, we have
	\begin{equation*}\label{N1}
	\prod\limits_{i = 1}^m {{{\left\| {{\omega _i}} \right\|}_{{L^{{q_i}}}(B)}}}  \approx {\left\| {{u_{\vec \omega }}} \right\|_{{L^q}(B)}}.
	\end{equation*}
	\begin{proof}[Proof:]
		Using Jensen's inequality and the definition of ${A_\infty }$ which can be found in \cite[p. 12]{Gra1} and \cite[p. 525]{Gra1}, we get
		\begin{equation*}
		\left| B \right|\exp (\frac{1}{{\left| B \right|}}\int_B {\log \omega _i^{{q_i}}} ) \le \omega _i^{{q_i}}(B) \lesssim \left| B \right|\exp (\frac{1}{{\left| B \right|}}\int_B {\log \omega _i^{{q_i}}} ).
		\end{equation*}
		and then we have
		\begin{align*}
		\prod\limits_{i = 1}^m {\omega _i^{{q_i}}{{(B)}^{\frac{q}{{{q_i}}}}}}  \approx \left| B \right|\exp (\frac{1}{{\left| B \right|}}\int_B {\log u_{\vec \omega }^q} ) \approx u_{\vec \omega }^q(B).
		\end{align*}
	\end{proof}
\end{lem}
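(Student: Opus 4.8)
The plan is to reduce both quantities to the geometric mean of the weights over $B$ by exploiting the reverse Jensen characterization of the $A_\infty$ class. First I would rewrite the norms as weighted measures, namely $\|\omega_i\|_{L^{q_i}(B)}=\omega_i^{q_i}(B)^{1/q_i}$ and $\|u_{\vec\omega}\|_{L^q(B)}=u_{\vec\omega}^q(B)^{1/q}$, so that the asserted comparability becomes $\prod_{i=1}^m \omega_i^{q_i}(B)^{1/q_i}\approx u_{\vec\omega}^q(B)^{1/q}$. This recasts the lemma as a purely multiplicative statement about the weighted measures of a ball.

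The key tool is that for any weight $w\in A_\infty$ and any ball $B$ one has $w(B)\approx |B|\exp\bigl(\frac{1}{|B|}\int_B \log w\bigr)$, where the lower bound is Jensen's inequality and the upper bound is precisely the reverse-Jensen description of $A_\infty$ (see \cite[pp. 12, 525]{Gra1}), with implicit constants depending only on the $A_\infty$ constant of $w$ and hence uniform in $B$. Applying this to each $\omega_i^{q_i}\in A_\infty$ gives $\omega_i^{q_i}(B)\approx |B|\exp\bigl(\frac{1}{|B|}\int_B \log\omega_i^{q_i}\bigr)$, and applying it to $u_{\vec\omega}^q\in A_\infty$ gives $u_{\vec\omega}^q(B)\approx |B|\exp\bigl(\frac{1}{|B|}\int_B \log u_{\vec\omega}^q\bigr)$.

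Next I would exploit the additivity of the logarithm: since $u_{\vec\omega}=\prod_{i=1}^m\omega_i$, we have $\log u_{\vec\omega}^q=q\sum_{i=1}^m\log\omega_i$ pointwise, so the exponent in the geometric mean of $u_{\vec\omega}^q$ splits as a sum of those of the $\omega_i$. Raising the estimate for $\omega_i^{q_i}(B)$ to the power $q/q_i$ and taking the product over $i$, the $|B|$-factors combine to $|B|^{q\sum_i 1/q_i}=|B|$ by the hypothesis $\sum_i 1/q_i=1/q$, while the exponential factors combine to $\exp\bigl(\frac{q}{|B|}\int_B\sum_i\log\omega_i\bigr)=\exp\bigl(\frac{1}{|B|}\int_B\log u_{\vec\omega}^q\bigr)$. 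This yields $\prod_{i=1}^m\omega_i^{q_i}(B)^{q/q_i}\approx u_{\vec\omega}^q(B)$, and taking the $1/q$-th power of both sides gives the claim.

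The only delicate points are the bookkeeping with the exponents $q/q_i$ and the repeated use of the identity $\sum_i 1/q_i=1/q$, together with the need to verify that the comparability constants are genuinely independent of $B$. The latter is guaranteed because the reverse-Jensen constant depends only on the $A_\infty$ constants of the finitely many weights $\omega_1^{q_1},\ldots,\omega_m^{q_m},u_{\vec\omega}^q$; no step requires quantitative information about $\alpha$ or the $p_i$, so the argument is entirely a statement about $A_\infty$ weights and their geometric means.
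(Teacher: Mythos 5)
Your proposal is correct and follows essentially the same route as the paper's proof: both reduce the claim to the geometric-mean (reverse Jensen) characterization of $A_\infty$ applied to each $\omega_i^{q_i}$ and to $u_{\vec\omega}^q$, then combine the estimates using $\log u_{\vec\omega}^q = q\sum_{i=1}^m \log\omega_i$ and the identity $\sum_{i=1}^m 1/q_i = 1/q$. Your write-up merely makes explicit the exponent bookkeeping and the uniformity of constants that the paper leaves implicit.
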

Next, we introduce some lemmas for ${\rm BMO}$ spaces.
\begin{lem}[\cite{Gra2}]
	For all $p \in [1,\infty )$ and $b \in L_{loc}^1(\rn)$, we have
	\begin{equation*}
	\mathop {\sup }\limits_B {\left(\frac{1}{{\left| B \right|}}\int_B^{} {{{\left| {b(x) - {b_B}} \right|}^p}dx} \right)^{\frac{1}{p}}} \approx {\left\| b \right\|_{\rm BMO}}:=\mathop {\sup }\limits_B \left(\frac{1}{{\left| B \right|}}\int_B^{} {\left| {b(x) - {b_B}} \right|dx} \right).
	\end{equation*}
\end{lem}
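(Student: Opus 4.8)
The plan is to establish the two inequalities hidden behind the symbol $\approx$ separately. For brevity write $\|b\|_{{\rm BMO},p}:=\sup_B\left(\frac{1}{|B|}\int_B|b(x)-b_B|^p\,dx\right)^{1/p}$ for the left-hand quantity, so that the claim reads $\|b\|_{{\rm BMO},p}\approx\|b\|_{\rm BMO}$ with $\|b\|_{\rm BMO}=\|b\|_{{\rm BMO},1}$.

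The inequality $\|b\|_{\rm BMO}\le\|b\|_{{\rm BMO},p}$ is the elementary direction. Since $p\ge 1$, I would apply Jensen's inequality (equivalently, Hölder's inequality) with the convex function $t\mapsto t^p$ against the normalized measure $dx/|B|$ on each fixed ball $B$, obtaining $\left(\frac{1}{|B|}\int_B|b-b_B|\,dx\right)^p\le\frac{1}{|B|}\int_B|b-b_B|^p\,dx$. Taking $p$-th roots and then the supremum over all balls $B$ closes this half.

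For the reverse inequality $\|b\|_{{\rm BMO},p}\lesssim\|b\|_{\rm BMO}$, the engine is the John--Nirenberg inequality: there exist dimensional constants $C_1,C_2>0$ so that for every ball $B$ and every $\lambda>0$ one has $|\{x\in B:|b(x)-b_B|>\lambda\}|\le C_1|B|\exp\!\left(-C_2\lambda/\|b\|_{\rm BMO}\right)$. Fixing $B$ and expressing the $L^p$ integral through its distribution function via $\int_B|b-b_B|^p\,dx=p\int_0^\infty\lambda^{p-1}|\{x\in B:|b(x)-b_B|>\lambda\}|\,d\lambda$, I would insert the John--Nirenberg bound and evaluate the resulting Gamma integral after the substitution $u=C_2\lambda/\|b\|_{\rm BMO}$. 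This yields $\frac{1}{|B|}\int_B|b-b_B|^p\,dx\le pC_1\Gamma(p)C_2^{-p}\|b\|_{\rm BMO}^p$; taking $p$-th roots and the supremum over $B$ gives $\|b\|_{{\rm BMO},p}\le C_{n,p}\|b\|_{\rm BMO}$ with $C_{n,p}=(pC_1\Gamma(p))^{1/p}/C_2$.

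The only substantive ingredient is the John--Nirenberg inequality, which supplies the exponential decay of the distribution function of $b-b_B$ on $B$; this is the main obstacle, in the sense that all the exponential integrability of a BMO function is packaged there, while the remaining steps (Jensen in one direction, the layer-cake computation in the other) are routine. Note that the implied constant depends on both $n$ and $p$, which is consistent with the asserted equivalence $\approx$.
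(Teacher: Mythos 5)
Your proposal is correct: the paper offers no proof of this lemma, citing it directly from Grafakos \cite{Gra2}, and your argument --- Jensen's inequality for $\|b\|_{\rm BMO}\le\|b\|_{{\rm BMO},p}$ and the John--Nirenberg inequality combined with the layer-cake formula for the reverse bound --- is precisely the standard proof given in that reference. The computation, including the Gamma-integral evaluation and the dependence of the constant on $n$ and $p$, is accurate, so there is nothing to add.
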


\begin{lem}[\cite{Gu1}]\label{Gu1}
	Let $\omega\in A_{\infty}$ and $b \in {\rm BMO}$. Then for any $p \in [1,\infty )$, $r_1, r_2 >0$, we have
	\begin{equation*}
	{\left( {\frac{1}{{\omega \left( {B({x_0},{r_1})} \right)}}\int_{B({x_0},{r_1})} {{{\left| {b(x) - {b_{B({x_0},{r_2})}}} \right|}^p}\omega (x)dx} } \right)^{\frac{1}{p}}} \lesssim {\left\| b \right\|_{\rm BMO}}\left( {1 + \left| {\log \frac{{{r_1}}}{{{r_2}}}} \right|} \right).
	\end{equation*}
\end{lem}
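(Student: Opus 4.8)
The plan is to split $b(x)-b_{B(x_0,r_2)}$ into a piece that measures the oscillation of $b$ about its own average over $B(x_0,r_1)$ and a constant piece comparing the two averages. Writing $B_i=B(x_0,r_i)$ and applying the triangle inequality together with Minkowski's inequality in $L^p(B_1,\omega\,dx)$, one obtains
\begin{align*}
\left(\frac{1}{\omega(B_1)}\int_{B_1}|b(x)-b_{B_2}|^p\omega(x)\,dx\right)^{\frac1p}
&\le\left(\frac{1}{\omega(B_1)}\int_{B_1}|b(x)-b_{B_1}|^p\omega(x)\,dx\right)^{\frac1p}\\
&\quad+|b_{B_1}-b_{B_2}|.
\end{align*}
It therefore suffices to estimate the two terms separately, the first uniformly in $B_1$ and the second with the advertised logarithmic factor.

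For the first term I would establish the weighted John--Nirenberg estimate that $\bigl(\frac{1}{\omega(B)}\int_{B}|b(x)-b_{B}|^p\omega(x)\,dx\bigr)^{1/p}\lesssim\|b\|_{\rm BMO}$ holds uniformly over balls $B$. Since $\omega\in A_\infty$, there exist constants $C>0$ and $\delta\in(0,1)$ such that $\omega(E)/\omega(B)\lesssim(|E|/|B|)^\delta$ for every measurable $E\subseteq B$. Applying this to the level sets $E_\lambda=\{x\in B:|b(x)-b_B|>\lambda\}$ and invoking the classical unweighted John--Nirenberg inequality $|E_\lambda|\lesssim|B|\exp(-c\lambda/\|b\|_{\rm BMO})$, we get $\omega(E_\lambda)\lesssim\omega(B)\exp(-c\delta\lambda/\|b\|_{\rm BMO})$. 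Integrating the layer-cake identity $\int_B|b-b_B|^p\omega\,dx=p\int_0^\infty\lambda^{p-1}\omega(E_\lambda)\,d\lambda$ then yields the bound $\lesssim\omega(B)\|b\|_{\rm BMO}^p$, which is exactly what is needed.

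For the second term I would use a telescoping argument over dyadic dilates. Assume without loss of generality $r_2\le r_1$ and let $N$ be the least integer with $2^N r_2\ge r_1$, so $N\approx 1+\log(r_1/r_2)$ and $B_1\subseteq 2^N B_2\subseteq 2B_1$. Using the elementary estimate $|b_B-b_{2B}|\le 2^n\|b\|_{\rm BMO}$, chaining through $B_2\subset 2B_2\subset\cdots\subset 2^N B_2$ and comparing the last ball with $B_1$ gives
$$|b_{B_1}-b_{B_2}|\le\sum_{k=0}^{N-1}|b_{2^{k+1}B_2}-b_{2^kB_2}|+|b_{B_1}-b_{2^NB_2}|\lesssim N\|b\|_{\rm BMO}\lesssim\Bigl(1+\Bigl|\log\tfrac{r_1}{r_2}\Bigr|\Bigr)\|b\|_{\rm BMO}.$$
The case $r_1<r_2$ is symmetric and produces the factor $1+|\log(r_1/r_2)|$ in general. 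Adding the two bounds gives the claimed inequality.

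The main obstacle is the weighted John--Nirenberg estimate of the second step: the passage from the unweighted exponential decay of level sets to the weighted one relies crucially on the $A_\infty$ hypothesis through the comparison $\omega(E)/\omega(B)\lesssim(|E|/|B|)^\delta$, without which the integrability of $|b-b_B|^p$ against $\omega$ is not automatic. The telescoping estimate is routine, but one must keep the count of doubling steps sharp so that the final constant is genuinely of order $1+|\log(r_1/r_2)|$ and not larger.
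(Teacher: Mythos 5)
The paper does not prove this lemma at all; it is imported verbatim from the cited reference \cite{Gu1}, so there is no in-paper argument to diverge from. Your proof is correct and follows the standard route used in that literature --- the Minkowski split into $\bigl(\frac{1}{\omega(B_1)}\int_{B_1}|b-b_{B_1}|^p\omega\,dx\bigr)^{1/p}+|b_{B_1}-b_{B_2}|$, the weighted John--Nirenberg bound obtained by feeding the classical level-set decay into the $A_\infty$ comparison $\omega(E)/\omega(B)\lesssim(|E|/|B|)^{\delta}$, and the dyadic telescoping giving $|b_{B_1}-b_{B_2}|\lesssim\bigl(1+\bigl|\log\frac{r_1}{r_2}\bigr|\bigr)\|b\|_{\rm BMO}$ --- with all steps (including the sharp count $N\approx 1+|\log(r_1/r_2)|$ and the final comparison $B_1\subseteq 2^NB_2\subseteq 2B_1$) carried out correctly, the implied constant depending only on $p$, $n$, and the $A_\infty$ constants of $\omega$, as the statement permits.
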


We will use the following statement on the boundedness of the weighted Hardy operators
\begin{equation*}
H_w^{}g(r): = \int_r^\infty  {g(t)w(t)\frac{{dt}}{t}},\qquad  H_w^*g(r): = \int_r^\infty  {{{\left( {1 + \log \frac{t}{r}} \right)}^m}g(t)w(t)\frac{{dt}}{t}}, t>0,
\end{equation*}
where $w$ is a weight. The following Lemmas are also important to prove the main results.

\begin{lem}[\cite{Gu1}]\label{Gu2}
	Let $v_1, v_2$ and $w$ be weights on $\left( {0,\infty } \right)$ and $v_1$ be bounded outside a neighborhood of the origin. The inequality
	\begin{equation}\label{Gu21}
	\mathop {\sup }\limits_{r > 0} {v_2}(r){H_w}g(r) \lesssim \mathop {\sup }\limits_{r > 0} {v_1}(r)g(r)
	\end{equation}
	holds for all non-negative and non-decreasing $g$ on $\left( {0,\infty } \right)$ if and only if
	\begin{equation*}
	B: = \mathop {\sup }\limits_{r > 0} {v_2}(r)\int_r^\infty  {\mathop {{\rm{inf}}}\limits_{t < \eta  < \infty } \left( {{v_1}{{(\eta )}^{ - 1}}} \right)w\left( t \right)dt}  < \infty.
	\end{equation*}
\end{lem}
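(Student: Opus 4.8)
The final statement to prove is Lemma~\ref{Gu2}, a characterization (if and only if) for a weighted Hardy-type inequality restricted to non-negative non-decreasing functions $g$. The plan is to prove the two implications separately. The sufficiency direction --- that finiteness of $B$ implies the inequality \eqref{Gu21} --- is the substantive part, while the necessity direction will follow by testing the inequality against a cleverly chosen family of step functions. First I would set up notation: abbreviate the inner infimum by $\tilde v_1(t) := \inf_{t<\eta<\infty} v_1(\eta)^{-1}$, so that $B = \sup_{r>0} v_2(r)\int_r^\infty \tilde v_1(t)\,w(t)\,dt$. The key structural observation is that since $g$ is non-decreasing, for each fixed $t$ we have $g(t) = \sup_{r \le t} g(r)$, but more usefully, for $\eta > t$ we have $g(t) \le g(\eta)$, which lets us bring $g$ inside and pair it against $v_1(\eta)^{-1} v_1(\eta)$.

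\textbf{Sufficiency.} Assuming $B < \infty$, fix $r>0$. The goal is to bound $v_2(r) H_w g(r) = v_2(r)\int_r^\infty g(t)\,w(t)\,dt$ by a constant times $\sup_{\rho>0} v_1(\rho) g(\rho)$. Write $A := \sup_{\rho>0} v_1(\rho) g(\rho)$, so that $g(\rho) \le A\, v_1(\rho)^{-1}$ for all $\rho$. The crucial step exploits monotonicity: since $g$ is non-decreasing, for each $t$ in the domain of integration and each $\eta > t$ one has $g(t) \le g(\eta) \le A\, v_1(\eta)^{-1}$, hence $g(t) \le A \inf_{t<\eta<\infty} v_1(\eta)^{-1} = A\,\tilde v_1(t)$. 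Substituting this pointwise bound into the integral gives
\begin{equation*}
v_2(r)\int_r^\infty g(t)\,w(t)\,dt \le A\, v_2(r)\int_r^\infty \tilde v_1(t)\,w(t)\,dt \le A\cdot B,
\end{equation*}
and taking the supremum over $r>0$ yields \eqref{Gu21} with implied constant $B$. The monotonicity of $g$ is exactly what converts the one-sided pointwise control $g(\rho)\le A v_1(\rho)^{-1}$ (valid at every scale) into control by the essential infimum $\tilde v_1$ appearing in $B$; this is the heart of the argument.

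\textbf{Necessity and the main obstacle.} For the converse, suppose \eqref{Gu21} holds with some constant $C$; I would produce test functions $g$ that nearly saturate it. For fixed $s>0$, the natural candidate is a non-decreasing step function adapted to the infimum, for instance $g_s(\rho) = \tilde v_1(\rho)\,\mathbf{1}_{\{\rho \le s\}}$ suitably truncated, or more robustly $g_s(\rho)=\min\{\tilde v_1(\rho),\ \tilde v_1(s)\}$-type constructions chosen so that $v_1(\rho)g_s(\rho) \lesssim 1$ while $g_s(t)\gtrsim \tilde v_1(t)$ on $(r,s)$; plugging such $g_s$ into \eqref{Gu21} and letting $s\to\infty$ recovers $v_2(r)\int_r^\infty \tilde v_1(t)w(t)\,dt \lesssim C$, giving $B\lesssim C$. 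The main obstacle is the interplay between the requirement that $g$ be non-decreasing and the presence of the \emph{infimum} $\inf_{t<\eta<\infty}$ in $B$: one must ensure the chosen test functions are genuinely monotone while still making $v_1(\rho)g(\rho)$ uniformly bounded, which is delicate precisely because $v_1$ need not be monotone. The hypothesis that $v_1$ is bounded outside a neighborhood of the origin is what guarantees these test functions are admissible (finite and with finite supremum $\sup v_1 g$), so I would use it exactly here to legitimize the construction. Since this lemma is cited from \cite{Gu1}, I would expect the authors either to reproduce this short duality argument or simply to invoke the reference; my proof would present the sufficiency computation in full and sketch the test-function construction for necessity.
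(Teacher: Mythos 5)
The paper never proves this lemma: it is imported verbatim as Lemma~\ref{Gu2} with the citation \cite{Gu1}, so there is no in-paper argument to compare against, and your proposal must be measured against the standard proof in the cited literature. Your sufficiency half is that standard proof, and it is complete and correct: monotonicity of $g$ upgrades the pointwise bound $g(\rho)\le A\,v_1(\rho)^{-1}$ to $g(t)\le A\inf_{t<\eta<\infty}v_1(\eta)^{-1}=A\,\tilde v_1(t)$, and integrating against $w$ gives the inequality with constant $B$. (The discrepancy between $H_wg(r)=\int_r^\infty g(t)w(t)\frac{dt}{t}$ and the measure $w(t)\,dt$ in $B$ is inherited from the paper's own statement and is harmless, since the factor $1/t$ can be absorbed into $w$.)

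In the necessity sketch there are two concrete problems. First, your initial candidate $g_s(\rho)=\tilde v_1(\rho)\mathbf{1}_{\{\rho\le s\}}$ is not non-decreasing (it drops to $0$ past $s$), hence inadmissible; only the second candidate $g_s(\rho)=\min\{\tilde v_1(\rho),\tilde v_1(s)\}$ works, and it works precisely because $\tilde v_1$ is itself automatically non-decreasing --- a fact your writeup never records, though it is what resolves the ``monotonicity versus infimum'' tension you worry about. Second, you assert but do not verify the key estimate $v_1(\rho)g_s(\rho)\lesssim 1$. For $\rho>s$ it is a trivial pointwise bound, since $\tilde v_1(s)=\inf_{\eta>s}v_1(\eta)^{-1}\le v_1(\rho)^{-1}$; but on $(0,s]$ the needed inequality $v_1(\rho)\,\tilde v_1(\rho)\le 1$ is \emph{false} pointwise in general: taking $v_1=c_n$ on $(n,n+1]$ with $c_n/c_{n+1}\to\infty$ gives $v_1(n+1)\,\tilde v_1(n+1)=c_n/c_{n+1}\to\infty$. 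The estimate holds only almost everywhere, and only when the supremum in \eqref{Gu21} and the infimum in $B$ are read as essential supremum and essential infimum --- which is how the source \cite{Gu1} formulates the lemma (the paper's restatement with pointwise $\sup$/$\inf$ is sloppy). With that reading, $g_s\uparrow\tilde v_1$ and monotone convergence yield $B\lesssim C$. Finally, your account of the hypothesis that $v_1$ is bounded outside a neighborhood of the origin is slightly off: the finiteness of $\sup_\rho v_1(\rho)g_s(\rho)$ comes from the bounds just described, whereas what boundedness actually buys is that $\tilde v_1(t)=\big(\sup_{\eta>t}v_1(\eta)\big)^{-1}$ is strictly positive on a tail, so the test functions are not identically zero and the necessity argument yields nontrivial information.
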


\begin{lem}[\cite{Gu1}]\label{Gu3}
	Let $v_1, v_2$ and $w$ be weights on $\left( {0,\infty } \right)$ and $v_1$ be bounded outside a neighborhood of the origin. The inequality
	\begin{equation}\label{Gu31}
	\mathop {\sup }\limits_{r > 0} {v_2}(r)H_w^*g(r) \lesssim \mathop {\sup }\limits_{r > 0} {v_1}(r)g(r)
	\end{equation}
	holds for all non-negative and non-decreasing $g$ on $\left( {0,\infty } \right)$ if and only if
	\begin{equation*}
	B: = \mathop {\sup }\limits_{r > 0} {v_2}(r)\int_r^\infty  {{{\left( {1 + \log \frac{t}{r}} \right)}^m}\mathop {{\rm{inf}}}\limits_{t < \eta  < \infty } \left( {{v_1}{{(\eta )}^{ - 1}}} \right)w\left( t \right)dt}  < \infty.
	\end{equation*}
\end{lem}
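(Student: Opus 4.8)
The plan is to establish the two implications separately, exploiting that the kernel factor $\bigl(1+\log\tfrac{t}{r}\bigr)^m$ depends on neither the test function $g$ nor the auxiliary variable $\eta$. Consequently the argument runs in exact parallel with Lemma~\ref{Gu2}, the sole change being that this fixed factor is carried unchanged through every estimate. Throughout I write $\|v_1 g\|_\infty:=\sup_{s>0}v_1(s)g(s)$ and use that $g$ is non-negative and non-decreasing.

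For the sufficiency direction (that $B<\infty$ implies \eqref{Gu31}), the key is to convert the pointwise size of $g$ into the essential-infimum expression appearing in $B$. Fix $t>0$. For every $\eta>t$ one has $g(\eta)\le\|v_1 g\|_\infty\, v_1(\eta)^{-1}$, and since $g$ is non-decreasing, $g(t)\le g(\eta)$; as the left-hand side is independent of $\eta$, taking the infimum over $\eta\in(t,\infty)$ yields
\begin{equation*}
g(t)\le\|v_1 g\|_\infty\inf_{t<\eta<\infty}v_1(\eta)^{-1}.
\end{equation*}
Substituting this bound into $H_w^* g(r)$ and multiplying by $v_2(r)$ gives
\begin{equation*}
v_2(r)H_w^* g(r)\le\|v_1 g\|_\infty\, v_2(r)\int_r^\infty\Bigl(1+\log\tfrac{t}{r}\Bigr)^m\inf_{t<\eta<\infty}v_1(\eta)^{-1}w(t)\,\frac{dt}{t}\le B\,\|v_1 g\|_\infty,
\end{equation*}
and taking the supremum over $r>0$ produces \eqref{Gu31} with implied constant $B$.

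For the necessity direction (that \eqref{Gu31} forces $B<\infty$), I would test \eqref{Gu31} against the distinguished function $g_0(t):=\inf_{t<\eta<\infty}v_1(\eta)^{-1}$. Since the infimum is taken over a set that shrinks as $t$ grows, $g_0$ is non-negative and non-decreasing, hence admissible; the hypothesis that $v_1$ be bounded outside a neighborhood of the origin guarantees that $g_0$ is finite and strictly positive, so that $g_0$ is a genuine test function rather than a degenerate one. Using that $g_0(t)\le v_1(t)^{-1}$ one obtains $\|v_1 g_0\|_\infty\le1$, while by construction $v_2(r)H_w^* g_0(r)$ coincides with the expression under the supremum defining $B$. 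Thus \eqref{Gu31} applied to $g_0$ reads $B=\sup_{r>0}v_2(r)H_w^* g_0(r)\lesssim\|v_1 g_0\|_\infty\le1$, which is the desired finiteness.

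The main obstacle is the necessity direction, and within it the verification that the candidate $g_0$ is admissible with $\|v_1 g_0\|_\infty<\infty$: establishing $g_0(t)\le v_1(t)^{-1}$ requires controlling $v_1(\eta)$ as $\eta\downarrow t$, which is precisely where the regularity assumption on $v_1$ (boundedness away from the origin) is used to rule out degenerate behaviour of the infimum. Once the test function is in hand, both directions are short, since the logarithmic kernel never interacts with the monotonicity argument and is simply inherited from the corresponding step in Lemma~\ref{Gu2}.
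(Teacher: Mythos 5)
Your overall route is exactly the one used for this lemma in the cited source \cite{Gu1} (the paper itself only quotes the lemma without proof): monotonicity converts the pointwise size of $g$ into the tail infimum for sufficiency, and the extremal function $g_0(t)=\inf_{t<\eta<\infty}v_1(\eta)^{-1}$ is tested for necessity, with the logarithmic kernel carried along inertly exactly as in Lemma~\ref{Gu2}. The sufficiency half of your argument is correct as written (note that you silently read the quantity $B$ with $w(t)\,\frac{dt}{t}$ rather than the statement's $w(t)\,dt$; that is the reading consistent with the definition of $H_w^*$, and the same slip occurs in the statement of Lemma~\ref{Gu2}).

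The necessity half, however, contains a genuine gap: the inequality $g_0(t)\le v_1(t)^{-1}$ is false in general, because the infimum defining $g_0(t)$ ranges over $\eta>t$ and never sees the value $v_1(t)$ itself. If $v_1\equiv 1$ except for a spike $v_1(t_0)=\Lambda$, then $g_0(t_0)=1$ while $v_1(t_0)^{-1}=\Lambda^{-1}$, so $v_1(t_0)g_0(t_0)=\Lambda$; worse, placing spikes of height $e^{e^k}$ at the points $1/k$ yields a $v_1$ that is bounded outside a neighborhood of the origin and yet has $\sup_{r>0}v_1(r)g_0(r)=\infty$, so in the pointwise reading your test function need not be admissible at all. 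Your diagnosis of where the hypothesis on $v_1$ enters is also off: boundedness of $v_1$ on $[\delta,\infty)$ by some $M$ gives $g_0(t)\ge M^{-1}>0$ for $t\ge\delta$ (it does not give positivity for small $t$, nor is strict positivity actually needed), but it in no way controls $v_1(\eta)$ as $\eta\downarrow t$ relative to $v_1(t)$, which is what your step would require. The standard repair, and the way the lemma is actually formulated and proved in \cite{Gu1} (following Burenkov--Guliyev), is to interpret the outer suprema and the inner infimum as essential ones, i.e. $g_0(t)=\|v_1\|_{L^\infty(t,\infty)}^{-1}$; one then shows $v_1(r)\le\operatorname{ess\,sup}_{\eta>r}v_1(\eta)$ for a.e.\ $r$ --- at every point of approximate continuity of $v_1$, for each $\alpha<v_1(r)$ the set $\{v_1>\alpha\}$ has density one at $r$ and hence meets $(r,r+\varepsilon)$ in positive measure --- which gives $\operatorname{ess\,sup}_{r>0}v_1(r)g_0(r)\le 1$ with no regularity assumption on $v_1$. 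With that single step replaced, your identity $B=\sup_{r>0}v_2(r)H_w^*g_0(r)\lesssim 1$ closes the necessity, and the rest of your argument stands.
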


In the sequel $\mathfrak{M}(0,\infty)$, ${\mathfrak{M}}^+(0,\infty)$ and ${\mathfrak{M}}^+((0,\infty);\uparrow)$ stand for the set of Lebesgue measurable functions on $(0,\infty)$, and its subspaces of nonnegative and nonnegative non-decreasing functions, respectively. We also denote
$$\mathbb{A} = \{ \varphi  \in \mathfrak{M}((0,\infty ); \uparrow ):\mathop {\lim }\limits_{t \to {0^ + }} \varphi  = 0\}.$$

Let $u$ be a continuous and non-negative function on $(0,\infty)$. We define the supremal operator ${{\bar S}_u}$ by
$$({{\bar S}_u}g)(r): = {\left\| {ug} \right\|_{{L^\infty }(r,\infty )}}, r \in (0,\infty ).$$

\begin{lem}[\cite{Gu8}]\label{Gu8}
	Suppose that $v_1$ and $v_2$ are nonnegative measurable functions such that ${\left\| {{v_1}} \right\|_{{L^\infty }(0,t)}} \in (0,\infty )$ for every $t>0$. Let $u$ be a continuous nonnegative function on $\mathbb{R}$.Then the operator ${{\bar S}_u}$ is bounded from ${L^\infty }((0,\infty ),{v_1})$ to ${L^\infty }((0,\infty ),{v_2})$ on the cone $\mathbb{A}$ if and only if
	\begin{equation}\label{Gu81}
	{\left\| {{v_2}{{\bar S}_u}(\left\| {{v_1}} \right\|_{{L^\infty }( \cdot ,\infty )}^{ - 1})} \right\|_{{L^\infty }(0,\infty )}} < \infty.
	\end{equation}
\end{lem}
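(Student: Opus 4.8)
The plan is to isolate the single \emph{extremal} function that governs the whole problem and then to exploit the monotonicity present both in the cone $\mathbb{A}$ and in the operator $\bar S_u$. Put $g_0(t):=\|v_1\|_{L^\infty(t,\infty)}^{-1}$, so that the quantity on the left of \eqref{Gu81} is exactly $\|v_2\,\bar S_u g_0\|_{L^\infty(0,\infty)}$. The key elementary fact is that $g_0$ is the pointwise largest nonnegative non-decreasing function $g$ with $\|v_1 g\|_{L^\infty(0,\infty)}\le 1$: it is non-decreasing because $t\mapsto\|v_1\|_{L^\infty(t,\infty)}$ is non-increasing, and if $g$ is non-decreasing with $v_1g\le1$ a.e., then for a.e.\ $s>t$ one has $g(t)\le g(s)\le v_1(s)^{-1}$, whence $g(t)\le\operatorname*{ess\,inf}_{s>t}v_1(s)^{-1}=\|v_1\|_{L^\infty(t,\infty)}^{-1}=g_0(t)$. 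Both implications are reduced to this majorization.

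For sufficiency I would assume \eqref{Gu81}, set $C_0:=\|v_2\,\bar S_u g_0\|_{L^\infty(0,\infty)}$, and take any $g\in\mathbb{A}$ with $M:=\|v_1g\|_{L^\infty(0,\infty)}$. Applying the majorization above to $g/M$ (which satisfies $\|v_1(g/M)\|_{L^\infty(0,\infty)}\le1$) gives $g(t)\le M\,g_0(t)$ for every $t>0$. Since $u\ge0$, the operator $\bar S_u$ is monotone and positively homogeneous, whence $\bar S_u g(r)\le M\,\bar S_u g_0(r)$ for all $r$; multiplying by $v_2(r)$ and taking the supremum yields $\|v_2\,\bar S_u g\|_{L^\infty(0,\infty)}\le M C_0=C_0\,\|v_1g\|_{L^\infty(0,\infty)}$, which is the asserted boundedness with constant $C_0$.

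For necessity I would assume $\bar S_u$ is bounded on $\mathbb{A}$ with norm $C$ and test against $g_0$. Here $\|v_1 g_0\|_{L^\infty(0,\infty)}\le1$, because $v_1(t)\le\|v_1\|_{L^\infty(t,\infty)}$ for a.e.\ $t$, while $v_2\,\bar S_u g_0$ is precisely the function whose $L^\infty(0,\infty)$ norm appears in \eqref{Gu81}; thus $\|v_2\,\bar S_u g_0\|_{L^\infty(0,\infty)}\le C\,\|v_1g_0\|_{L^\infty(0,\infty)}\le C$ gives \eqref{Gu81}. The one delicate point is that $g_0$ need not obey the defining requirement $\lim_{t\to0^+}g_0=0$ of $\mathbb{A}$, so strictly speaking it is not itself an admissible test function; I would remedy this by testing against the truncated family $g_{0,\varepsilon}(t):=\bigl(g_0(t)-g_0(\varepsilon)\bigr)_+$, each member of which lies in $\mathbb{A}$ and satisfies $\|v_1 g_{0,\varepsilon}\|_{L^\infty(0,\infty)}\le1$, and then letting $\varepsilon\to0^+$ and using the monotone convergence of the suprema to recover the estimate for $g_0$.

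I expect the main obstacle to be purely technical rather than structural: making rigorous the a.e.\ manipulations with essential suprema (the majorization $g\le M g_0$ and the reverse inequality $v_1g_0\le1$), and in particular the approximation step that keeps the test functions inside the cone $\mathbb{A}$ while still converging to the extremal $g_0$. Once $g_0$ is correctly identified, both directions are short and self-contained: the monotonicity and homogeneity of $\bar S_u$ carry all the analytic content, and no Hardy-operator duality of the type used in Lemmas \ref{Gu2}--\ref{Gu3} is needed.
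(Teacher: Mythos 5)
Your overall architecture is the standard one for this lemma: identify $g_0(t):=\|v_1\|_{L^\infty(t,\infty)}^{-1}$ as the pointwise largest non-decreasing function in the unit ball of $L^\infty((0,\infty),v_1)$, prove sufficiency by the majorization $g\le M g_0$ together with the monotonicity and homogeneity of $\bar S_u$, and prove necessity by testing against $g_0$. The paper itself gives no proof of this lemma (it is quoted from \cite{Gu8}), and the argument in that source is of exactly this type, so your route is the right one. The sufficiency half of your proposal is correct as written, and the two a.e.\ verifications you flag ($v_1 g_0\le 1$ a.e., and $g\le M g_0$ everywhere) do go through. (One side remark: for $g_0$ to be finite-valued you need $\|v_1\|_{L^\infty(t,\infty)}>0$ for all $t>0$, which is presumably the intended reading of the hypothesis on $v_1$, as in \cite{Gu8}; your argument uses this silently.)

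The genuine gap is in your remedy for the fact that $g_0\notin\mathbb{A}$. The truncations $g_{0,\varepsilon}=\bigl(g_0-g_0(\varepsilon)\bigr)_+$ increase, as $\varepsilon\to0^+$, not to $g_0$ but to $g_0-g_0(0^+)$, where $g_0(0^+)=\|v_1\|_{L^\infty(0,\infty)}^{-1}$. Whenever $\|v_1\|_{L^\infty(0,\infty)}<\infty$ — which the hypotheses permit, e.g.\ $v_1\equiv 1$ — your limit loses exactly the constant part $g_0(0^+)$; in the case $v_1\equiv 1$ one has $g_0\equiv 1$ and every $g_{0,\varepsilon}\equiv 0$, so the limiting argument yields nothing and \eqref{Gu81} is not recovered. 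The fix is to truncate by restriction rather than by subtraction: set $g_\varepsilon:=g_0\chi_{(\varepsilon,\infty)}$, which is non-negative, non-decreasing, vanishes as $t\to0^+$, hence lies in $\mathbb{A}$, and still satisfies $\|v_1g_\varepsilon\|_{L^\infty(0,\infty)}\le 1$; moreover $\bar S_u g_\varepsilon(r)=\|ug_0\|_{L^\infty(\max(r,\varepsilon),\infty)}$ increases to $\bar S_u g_0(r)$ for every $r$ as $\varepsilon\to0^+$, since essential suprema over an increasing union of intervals converge. Taking $\varepsilon=1/k$ and discarding a countable union of null sets gives $v_2\,\bar S_u g_0\le C$ a.e., which is \eqref{Gu81}. (Alternatively, keep your truncation and recover the missing constant via the extra test functions $\chi_{(\delta,\infty)}\in\mathbb{A}$, which yield $v_2(r)\|u\|_{L^\infty(r,\infty)}\le C\|v_1\|_{L^\infty(\delta,\infty)}\le C\|v_1\|_{L^\infty(0,\infty)}$ for a.e.\ $r>\delta$, and then use $\bar S_u g_0\le \bar S_u(g_0-g_0(0^+))+g_0(0^+)\,\bar S_u\mathbf{1}$; but the restriction truncation is cleaner.) With this one repair your proof is complete.
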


In order to prove our results, we need to estimate characteristic function on (weak) generalized weighted Morrey spaces
\begin{lem}[\cite{Gu7}]\label{Gu7}
	Let $\omega$ is a weight, ${B_0} = B({x_0},{r_0}) \subseteq \rn$. If $\varphi \in \mathcal{G}_\omega ^p$, we have
	\begin{equation*}
	\varphi {({x_0},{r_0})^{ - 1}} \le {\left\| {{\chi _{{B_0}}}} \right\|_{W{M^{p,\varphi }}\left( {{\omega _{}}^p} \right)}} \le {\left\| {{\chi _{{B_0}}}} \right\|_{{M^{p,\varphi }}\left( {{\omega _{}}^p} \right)}} \lesssim \varphi {({x_0},{r_0})^{ - 1}}.
	\end{equation*}
\end{lem}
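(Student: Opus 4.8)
The plan is to prove the three comparisons in the chain separately, extracting both the lower and upper bounds from the explicit value of the (weak) Morrey norm of an indicator. Throughout I write $W=\omega^p$ and $W(E)=\int_E\omega(y)^p\,dy$, and record the elementary identity $\|\chi_{B_0}\|_{L^p(B(x,r),W\,dx)}=W(B(x,r)\cap B_0)^{1/p}$, valid for every test ball $B(x,r)$. The middle inequality $\|\chi_{B_0}\|_{WM^{p,\varphi}(W)}\le\|\chi_{B_0}\|_{M^{p,\varphi}(W)}$ is then immediate, since $\|g\|_{WL^p(\mu)}\le\|g\|_{L^p(\mu)}$ on every ball with $\mu=W\,dx$, and taking the supremum over $(x,r)$ preserves the inequality.

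For the lower bound I would test the weak norm at the single pair $(x,r)=(x_0,r_0)$. As $\chi_{B_0}\equiv 1$ on $B_0$, its weak quasi-norm on $(B_0,W\,dx)$ equals $\sup_{0<\lambda<1}\lambda\,W(B_0)^{1/p}=W(B_0)^{1/p}$, so the normalizing factor $W(B_0)^{-1/p}$ in the definition cancels it exactly and what remains is $\varphi(x_0,r_0)^{-1}$; hence $\|\chi_{B_0}\|_{WM^{p,\varphi}(W)}\ge\varphi(x_0,r_0)^{-1}$. This closes the left end of the chain.

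The real content is the upper bound, namely $\sup_{x\in\rn,\,r>0}\varphi(x,r)^{-1}W(B(x,r)\cap B_0)^{1/p}/W(B(x,r))^{1/p}\lesssim\varphi(x_0,r_0)^{-1}$, equivalently $\varphi(x_0,r_0)\,W(B(x,r)\cap B_0)^{1/p}/W(B(x,r))^{1/p}\lesssim\varphi(x,r)$ for every $(x,r)$ with $B(x,r)\cap B_0\ne\emptyset$. I would split according to the size of $r$. For $0<r\le r_0$ the fraction is at most $1$ (since $B(x,r)\cap B_0\subseteq B(x,r)$), and condition $(1)$ of $\mathcal{G}_\omega^p$ gives $\varphi(x_0,r_0)\lesssim\varphi(x,r)$, which closes this regime. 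For $r>r_0$ I estimate the numerator by $W(B(x,r)\cap B_0)\le W(B_0)$, reducing the claim to $\varphi(x_0,r_0)\,W(B_0)^{1/p}\lesssim\varphi(x,r)\,W(B(x,r))^{1/p}$, which is exactly the content of condition $(2)$ of $\mathcal{G}_\omega^p$, the quantity $\varphi(x,r)\,W(B(x,r))^{1/p}$ being controlled from below uniformly by its value at $(x_0,r_0)$. Taking the supremum over the two regimes yields $\|\chi_{B_0}\|_{M^{p,\varphi}(W)}\lesssim\varphi(x_0,r_0)^{-1}$ and completes the chain.

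The step I expect to be the main obstacle is the large-radius regime $r>r_0$: here one cannot merely bound the overlap fraction by a constant, and one must genuinely exploit the two structural defining properties of $\mathcal{G}_\omega^p$ together with the almost-monotonicity of $\varphi$ in order to control $\varphi(x,r)\,W(B(x,r))^{1/p}$ from below. All the remaining steps reduce to the elementary indicator computation and the trivial weak-versus-strong comparison.
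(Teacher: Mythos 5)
Your proof is correct and follows the standard argument for this lemma: the paper itself states it without proof, citing \cite{Gu7}, and the proof in that line of work is precisely your three-step scheme --- lower bound by testing the weak norm at the single pair $(x_0,r_0)$ (where the weak and strong $L^p$ norms of $\chi_{B_0}$ both equal $\omega^p(B_0)^{1/p}$), the trivial weak-to-strong comparison, and the upper bound via the dichotomy $r\le r_0$ (overlap fraction at most $1$, then condition (1)) versus $r>r_0$ (numerator bounded by $\omega^p(B_0)^{1/p}$, then condition (2)).

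One caveat is worth recording. As typeset in this paper, condition (2) takes the infimum over $0<r\le r_0$, which literally asserts that $g(x,r):=\varphi(x,r)\,\omega^p(B(x,r))^{1/p}$ is almost \emph{decreasing} in $r$ uniformly in the center; applied to your test ball $B(x,r)$ with $r>r_0$ it would yield $g(x,r)\lesssim g(x_0,r_0)$, the reverse of the inequality $g(x_0,r_0)\lesssim g(x,r)$ that your large-radius regime requires. Your proof tacitly reads the condition with the range $r\ge r_0$, i.e.\ $g$ almost increasing in $r$, and that is the only reading under which the lemma can hold: for $\omega\equiv 1$ and $\varphi(x,r)=r^{(\lambda-n)/p}$ with $0<\lambda<n$ one has $g(x,r)\approx r^{\lambda/p}$, which violates the literal condition (2) as $r\to 0$ yet the lemma is classical and true in that case ($\|\chi_{B_0}\|_{L^{p,\lambda}}\approx r_0^{(n-\lambda)/p}$). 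So your identification of the regime $r>r_0$ as the crux and your use of condition (2) there are exactly right; just be aware that you are (correctly) invoking the intended form of the defining condition rather than its literal transcription in the paper, which appears to contain a typo in the range of the infimum.
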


To be end, we recall some important weighted estimates of multilinear fractional maximal and integral operators.
\begin{lem}[\cite{Moen,Chen-Xue}]\label{LB1}
	Let $0 < \alpha < mn$, $1 < p_1, \ldots, p_m < \infty$,
	$\frac{1}{p} = \frac{1}{p_1} + \cdots + \frac{1}{p_m}$ and
	$\frac{1}{q} = \frac{1}{p} - \frac{\alpha}{n}$. Then for
	$\vec{\omega}\in A_{\vec{P}, q}$ if and only if either of the two multiple weighted norm inequalities holds.
	\begin{equation}
	{\big\|\mathcal{M}_{\alpha}(\vec{f})\big\|}_{L^{q}({u_{\vec{\omega}}}^{q})} \le C \prod_{i=1}^m{\big\|f_i\big\|}_{L^{p_i}(\omega_i^{p_i})};
	\end{equation}
	\begin{equation}
	{\left\|I_{\alpha,m}(\vec{f})\right\|}_{L^{q}({u_{\vec{\omega}}}^{q})} \le C \prod_{i =1}^m{\big\|f_i\big\|}_{L^{p_i}(\omega_i^{p_i})}.
	\end{equation}
\end{lem}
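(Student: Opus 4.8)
\textbf{Proof proposal for Lemma \ref{LB1}.}

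The plan is to prove the stated equivalence by establishing three implications: that the maximal inequality implies the $A_{\vec P,q}$ condition, that $A_{\vec P,q}$ implies the integral inequality, and that the integral inequality trivially implies the maximal inequality via the pointwise domination $\mathcal{M}_\alpha(\vec f) \lesssim I_{\alpha,m}(|f_1|,\ldots,|f_m|)$ already noted in the excerpt. Since this pointwise bound instantly gives that the integral inequality forces the maximal one, and since the maximal operator dominates no weaker quantity, the real content is the cyclic chain: maximal inequality $\Rightarrow A_{\vec P,q}$, $A_{\vec P,q} \Rightarrow$ integral inequality, integral inequality $\Rightarrow$ maximal inequality. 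This closes the loop and yields both ``if and only if'' statements simultaneously, with both norm inequalities shown equivalent to the weight condition.

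First I would prove that the maximal inequality implies $\vec\omega \in A_{\vec P,q}$. Fixing a ball $B$, I would test the inequality on the tuple $f_i = \omega_i^{-p_i'}\chi_B$ (interpreted as $\chi_B$ over an essential-infimum when $p_i=1$). On the left, for $x \in B$ one estimates $\mathcal{M}_\alpha(\vec f)(x)$ from below by the single term $r = \mathrm{diam}(B)$, giving a lower bound of the form $|B|^{\alpha/n - m}\prod_i \int_B \omega_i^{-p_i'}$ times $u_{\vec\omega}(B)^{1/q}$-type factors after integrating. On the right, $\|f_i\|_{L^{p_i}(\omega_i^{p_i})} = (\int_B \omega_i^{-p_i'})^{1/p_i}$. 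Dividing through and using the scaling relations $\tfrac1q = \tfrac1p - \tfrac\alpha n$ and $|B| \approx r^n$, the $r$-powers cancel and precisely the $A_{\vec P,q}$ supremand emerges, bounded by the operator norm $C$. This is the standard duality-free testing argument for fractional maximal operators.

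The main obstacle is the forward direction $A_{\vec P,q} \Rightarrow$ the integral inequality, which is the substantive harmonic-analysis input. Here I would follow Moen's approach: decompose $I_{\alpha,m}(\vec f)$ dyadically and pass to an associated dyadic multilinear fractional operator, then run a good-$\lambda$ or sparse-domination argument controlling $I_{\alpha,m}$ by the multilinear fractional maximal function together with a reverse-H\"older gain supplied by $A_{\vec P,q}$. The key technical facts are the consequences of $A_{\vec P,q}$ recorded in Lemma \ref{lem4}, namely $u_{\vec\omega}^q \in A_{mq}$ and $\omega_i^{-p_i'} \in A_{mp_i'}$, which furnish the reverse-H\"older exponents needed to sum the dyadic pieces and absorb the weight over each generation of cubes. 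I expect the bookkeeping of the $m$ independent weights and the interplay of the exponents $p_i, q_i, q$ through the relation $\tfrac{\alpha_i}{n} = \tfrac1{p_i}-\tfrac1{q_i}$ to be the delicate part. Since this direction is precisely the content of \cite{Moen} and \cite{Chen-Xue}, I would cite those weighted estimates and, if a self-contained treatment is desired, reproduce the sparse-domination step, invoking Lemma \ref{lem4} to close the summation. With all three implications in hand, the chain of equivalences is complete.
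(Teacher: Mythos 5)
The paper does not actually prove Lemma \ref{LB1}: it imports the statement verbatim from \cite{Moen,Chen-Xue}, so there is no in-paper argument to compare against, and the right benchmark is the proofs in those references. Measured against them, your outline is correct and is essentially the standard route. The cyclic chain (maximal inequality $\Rightarrow A_{\vec P,q}$, $A_{\vec P,q}\Rightarrow$ integral inequality, integral inequality $\Rightarrow$ maximal inequality via the pointwise bound $\mathcal{M}_\alpha(\vec f)\lesssim \mathcal{I}_{\alpha,m}(|f_1|,\ldots,|f_m|)$, which the paper itself records in a remark) is exactly how the two-sided equivalence is assembled in the literature, and your testing computation checks out: with $f_i=\omega_i^{-p_i'}\chi_B$ one has $\|f_i\|_{L^{p_i}(\omega_i^{p_i})}=\left(\int_B\omega_i^{-p_i'}\right)^{1/p_i}$, while for $x\in B$ the choice $r=\mathrm{diam}(B)$ gives $\mathcal{M}_\alpha(\vec f)(x)\gtrsim |B|^{\alpha/n-m}\prod_{i=1}^m\int_B\omega_i^{-p_i'}$, and after integrating in $x$ against $u_{\vec\omega}^q$ the relation $\frac1q=\frac1p-\frac\alpha n$ makes the powers of $|B|$ cancel, leaving precisely the $A_{\vec P,q}$ supremand bounded by the operator norm. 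Two small points worth tightening: first, you should test with the truncations $\min(\omega_i^{-p_i'},N)\chi_B$ and let $N\to\infty$, since a priori $\int_B\omega_i^{-p_i'}$ could be infinite and the division step would otherwise be illegitimate; second, the lemma assumes $1<p_i<\infty$ throughout, so the $p_i=1$ essential-infimum reading you mention is vacuous here (it matters only for the weak-type statement, Lemma \ref{LB2}). For the substantive direction $A_{\vec P,q}\Rightarrow$ boundedness of $\mathcal{I}_{\alpha,m}$ you defer to Moen's dyadic decomposition with the $A_\infty$/reverse-H\"older input packaged in Lemma \ref{lem4}; that is a legitimate closure of the argument and is in effect what the paper does by citing, so your proposal matches the intended proof rather than diverging from it.
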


\begin{lem}[\cite{Moen,Chen-Xue,chen-wu}]\label{LB2}
	Let $0 < \alpha < mn$, $1 \leqslant p_1, \ldots, p_m < \infty$,
	$\frac{1}{p} = \frac{1}{p_1} + \cdots + \frac{1}{p_m}$ and
	$\frac{1}{q} = \frac{1}{p} - \frac{\alpha}{n}$. Then for
	$\vec{\omega}\in A_{\vec{P}, q}$ there is a constant $C > 0$
	independent of $\vec{f}$ such that
	\begin{align}
	{\left\|\mathcal{I}_{\alpha,m}(\vec{f})\right\|}_{L^{q,\infty}({u_{\vec{\omega}}}^{q})} &\le C \prod_{i =1}^m{\big\|f_i\big\|}_{L^{p_i}(\omega_i^{p_i})};\notag\\
	{\left\|\mathcal{M}_{\alpha}(\vec{f})\right\|}_{L^{q,\infty}({u_{\vec{\omega}}}^{q})} &\le C \prod_{i =1}^m{\big\|f_i\big\|}_{L^{p_i}(\omega_i^{p_i})};\notag\\
	{\left\| {{\mathcal{I}_{\alpha ,m,\Pi \vec b}}(\vec f)} \right\|_{{L^q}({u_{\vec \omega }}^q)}} &\le C\prod\limits_{i = 1}^m {{{\left\| {{b_i}} \right\|}_{\rm BMO}}{{\left\| {{f_i}} \right\|}_{{L^{{p_i}}}(\omega _i^{{p_i}})}}};\notag\\
	{\left\| {{\mathcal{M}_{\alpha ,\Pi \vec b}}(\vec f)} \right\|_{{L^q}({u_{\vec \omega }}^q)}} &\le C\prod\limits_{i = 1}^m {{{\left\| {{b_i}} \right\|}_{\rm BMO}}{{\left\| {{f_i}} \right\|}_{{L^{{p_i}}}(\omega _i^{{p_i}})}}};\notag\\
	{\left\| {I_{\alpha,m,\sum {\vec b} }^i(\vec f)} \right\|_{{L^q}({u_{\vec \omega }}^q)}}&\le C{\left\| {{b_i}} \right\|_{\rm BMO}}\prod\limits_{i = 1}^m {{{\left\| {{f_i}} \right\|}_{{L^{{p_i}}}(\omega _i^{{p_i}})}}};\notag\\
	{\left\| {\mathcal{M}_{\alpha ,\sum {\vec b} }^i(\vec f)} \right\|_{{L^q}({u_{\vec \omega }}^q)}} &\le C{\left\| {{b_i}} \right\|_{\rm BMO}}\prod\limits_{i = 1}^m {{{\left\| {{f_i}} \right\|}_{{L^{{p_i}}}(\omega _i^{{p_i}})}}}.
	\end{align}
\end{lem}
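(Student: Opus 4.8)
These six estimates are all instances of the multiple-weight theory for multilinear fractional operators established in \cite{Moen,Chen-Xue,chen-wu}; I sketch a unified route. The first observation is the pointwise bound ${\mathcal M}_\alpha(\vec f) \lesssim_{n,m,\alpha} {\mathcal I}_{\alpha,m}(|f_1|,\ldots,|f_m|)$, by which every estimate for a maximal object reduces to the corresponding estimate for the integral object, so it suffices to treat ${\mathcal I}_{\alpha,m}$, ${\mathcal I}_{\alpha,m,\Pi\vec b}$ and $I_{\alpha,m,\sum\vec b}^i$. For the two weak-type bounds (the endpoint case $\min_k p_k=1$, where the strong bound of Lemma \ref{LB1} is unavailable), the plan is to pass to a sparse form ${\mathcal I}_{\alpha,m}(\vec f) \lesssim \sum_{Q\in{\mathcal S}} |Q|^{\alpha/n}\prod_{i=1}^m \langle |f_i|\rangle_{Q}\,\chi_Q$ for a sparse family ${\mathcal S}$, and then run a Calder\'on--Zygmund stopping-time decomposition of the level set $\{{\mathcal I}_{\alpha,m}(\vec f)>\lambda\}$ against the weight $u_{\vec\omega}^q$. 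The $A_{\vec P,q}$ hypothesis is exactly what converts the product of local $L^1$ averages into a comparable product of $L^{p_i}(\omega_i^{p_i})$-normalized quantities, yielding the weak $(p_1,\ldots,p_m)\to(q,\infty)$ estimate; the strong bounds of Lemma \ref{LB1} come out of the same sparse form by direct testing when $\min_k p_k>1$.

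For the four commutator estimates (the strong bounds for ${\mathcal I}_{\alpha,m,\Pi\vec b}$, ${\mathcal M}_{\alpha,\Pi\vec b}$, $I_{\alpha,m,\sum\vec b}^i$ and ${\mathcal M}_{\alpha,\sum\vec b}^i$) I would use the Cauchy-integral (exponentiation) device of Coifman--Rochberg--Weiss adapted to the multilinear setting. Writing $F(\vec z) = {\mathcal I}_{\alpha,m}(e^{z_1 b_1}f_1,\ldots,e^{z_m b_m}f_m)$, one has ${\mathcal I}_{\alpha,m,\Pi\vec b}(\vec f) = \partial_{z_1}\cdots\partial_{z_m} F(\vec z)\big|_{\vec z=0} = \frac{1}{(2\pi i)^m}\oint\cdots\oint F(\vec z)\prod_{i=1}^m \frac{dz_i}{z_i^2}$, where each contour is a circle of radius $\varepsilon_i\sim \|b_i\|_{\rm BMO}^{-1}$. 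The decisive point is that for such small radii the perturbed weights $e^{\mathrm{Re}(z_i)\,b_i}\omega_i$ remain in $A_{\vec P,q}$ with constant bounded in terms of the original one (by the John--Nirenberg/reverse-H\"older self-improvement for BMO); hence Lemma \ref{LB1} applies to $F(\vec z)$ uniformly on the contours, and integrating produces precisely the factor $\prod_{i=1}^m \|b_i\|_{\rm BMO}$. The sum-commutator $I_{\alpha,m,\sum\vec b}^i$ is handled by the same argument with a single exponential $e^{z b_i}$ in the $i$-th slot, giving the factor $\|b_i\|_{\rm BMO}$; summing over $i$ and taking the maximum then yields the stated bound for the full object ${\mathcal I}_{\alpha,m,\sum\vec b}$. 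The maximal counterparts ${\mathcal M}_{\alpha,\Pi\vec b}$ and ${\mathcal M}_{\alpha,\sum\vec b}^i$ again follow by the pointwise domination above.

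The main obstacle is the uniform stability claim: that $e^{\mathrm{Re}(z_i)b_i}\omega_i\in A_{\vec P,q}$ with controlled constant for $|z_i|\le\varepsilon_i$. This is where the quantitative John--Nirenberg inequality enters, since one must verify that the $A_{\vec P,q}$ product testing quantity changes by at most a bounded factor under the exponential perturbation, which is what forces the scaling $\varepsilon_i\sim\|b_i\|_{\rm BMO}^{-1}$ and hence the correct power of $\|b_i\|_{\rm BMO}$. An alternative that avoids exponentiation is to establish a sparse domination of the multilinear fractional commutators carrying the oscillation weights $\prod_i|b_i(x)-\langle b_i\rangle_Q|$, and then invoke John--Nirenberg to pass from oscillations to BMO norms; either route isolates the genuinely new analytic input, while the reduction from maximal to integral operators and the bookkeeping between the strong and weak scales are routine.
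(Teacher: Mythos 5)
The paper offers no proof of this lemma at all: it is imported verbatim from \cite{Moen,Chen-Xue,chen-wu}, so the only comparison available is with the arguments in those references, which differ from yours. Moen obtains the strong and weak $(p_1,\ldots,p_m)\to q$ bounds from the multilinear $A_{\vec P,q}$ characterization together with a good-$\lambda$/comparison argument relating $\mathcal{I}_{\alpha,m}$ to $\mathcal{M}_\alpha$, while Chen--Xue and Chen--Wu prove the commutator estimates via pointwise Fefferman--Stein sharp maximal function bounds involving $M_{L(\log L)}$-type maximal operators, followed by weighted norm inequalities for those. Your route --- sparse domination for the weak-type endpoint bounds, and the Coifman--Rochberg--Weiss Cauchy-integral conjugation for the commutators, with uniform stability of $A_{\vec P,q}$ under the perturbation $\omega_i \mapsto e^{\mathrm{Re}(z_i)b_i}\omega_i$ for $|z_i|\lesssim \|b_i\|_{\rm BMO}^{-1}$ as the quantitative engine --- is a genuinely different, more modern path, and you correctly isolate the one nontrivial analytic input (the John--Nirenberg self-improvement behind that stability).

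There is, however, a concrete gap in your treatment of the fourth and sixth estimates. The maximal commutators $\mathcal{M}_{\alpha,\Pi\vec b}$ and $\mathcal{M}^i_{\alpha,\Sigma\vec b}$ carry the absolute values $|b_i(x)-b_i(y_i)|$ inside the averages (this is exactly how the paper uses them, e.g.\ in the proof of Theorem \ref{CFRM}). The contour-integral device exploits cancellation and yields bounds only for the signed operator $\mathcal{I}_{\alpha,m,\Pi\vec b}$; one has $|\mathcal{I}_{\alpha,m,\Pi\vec b}(\vec f)(x)| \le \widetilde{\mathcal{I}}(\vec f)(x)$, where $\widetilde{\mathcal{I}}$ is the positive-kernel operator with factors $|b_i(x)-b_i(y_i)|$, but \emph{not} conversely, and it is $\widetilde{\mathcal{I}}$, not $\mathcal{I}_{\alpha,m,\Pi\vec b}$, that pointwise dominates $\mathcal{M}_{\alpha,\Pi\vec b}$. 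So your closing claim that "the maximal counterparts again follow by the pointwise domination above" does not hold: the domination $\mathcal{M}_\alpha \lesssim \mathcal{I}_{\alpha,m}(|f_1|,\ldots,|f_m|)$ is valid for the plain operators precisely because the kernel is positive, and it does not transfer CRW-type commutator bounds to the absolute-value maximal commutators. To repair this you must argue for the positive-kernel oscillation operator directly --- either via your own fallback (sparse domination carrying the weights $\prod_i |b_i(x)-\langle b_i\rangle_Q|$, then John--Nirenberg), or via the sharp-maximal/$M_{L(\log L)}$ pointwise estimates actually used in \cite{Chen-Xue,chen-wu}. With that substitution your outline recovers all six inequalities; as written, the CRW branch proves the two integral-commutator bounds but leaves $\mathcal{M}_{\alpha,\Pi\vec b}$ and $\mathcal{M}^i_{\alpha,\Sigma\vec b}$ unproved.
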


\section{Proof of Theorems \ref{FRM}--\ref{CCFIM2}}\label{sec3}

\subsection{Proof of Theorem \ref{FRM}}
\begin{proof}[Proof]
	For any ball $B=B(x_0,r)$, let $f_i=f^0_i+f^{\infty}_i$, where $f^0_i=f_i\chi_{2B}$, $i=1,\ldots,m$ and $\chi_{2B}$ denotes the characteristic function of $2B$. Then, we give the following geometric relations by the triangle inequality
	\begin{enumerate}[(i)]
		\item If $z\in B$, then $B(z,t) \cap B{({x_0},2t)^c} \ne \varnothing  \Rightarrow t > r$.
		\item If $z\in B$ and $t > r$, then $B(z,t) \cap B{({x_0},2r)^c} \subseteq B({x_0},2t)$.
	\end{enumerate}
	For any $z\in B$, $\vec \beta  = \left( {{\beta _1}, \ldots ,{\beta _m}} \right) \ne 0$, we have 
	\begin{align}
	{\mathcal{M}_\alpha }({{\vec f}^{\vec \beta }})(z)=& \mathop {\sup }\limits_{t > 0} \prod\limits_{i = 1}^m {{{\left| {B(z,t)} \right|}^{ - 1 + \frac{\alpha_1 }{{n}}}}\int_{B(z,t)} {\left| {f_i^{{\beta _i}}({y_i})} \right|d{y_i}} }\notag\\
	\lesssim& \mathop {\sup }\limits_{t > r} \prod\limits_{i = 1}^m {{{\left| {B({x_0},2t)} \right|}^{ - 1 + \frac{\alpha_i }{{n}}}}\int_{B({x_0},2t)} {\left| {{f_i}({y_i})} \right|d{y_i}} } \notag\\
	\le& \mathop {\sup }\limits_{t > r} \prod\limits_{i = 1}^m {{{\left| {B({x_0},t)} \right|}^{ - 1 + \frac{\alpha_i }{{n}}}}\int_{B({x_0},t)} {\left| {{f_i}({y_i})} \right|d{y_i}} } \notag\\
	\le& \mathop {\sup }\limits_{t > r} \prod\limits_{i = 1}^m {{{\left| {B({x_0},t)} \right|}^{ - 1 + \frac{\alpha_i }{{n}}}}{{\left\| {{f_i}} \right\|}_{{L^{{p_i}}}(B({x_0},t),\omega _i^{{p_i}})}}{{\left\| {\omega _i^{ - 1}} \right\|}_{{L^{{p_i}^\prime }}(B({x_0},t),\omega _i^{{p_i}})}}}\notag\\
	\lesssim&\mathop {\sup }\limits_{t > r} \prod\limits_{i = 1}^m {\left\| {{\omega _i}} \right\|_{{L^{{q_i}}}(B({x_0},t))}^{ - 1}{{\left\| {{f_i}} \right\|}_{{L^{{p_i}}}(B({x_0},t),\omega _i^{{p_i}})}}},\notag
	\end{align}
	where the last step holds due to the definition of $\vec{\omega} \in {A_{\vec P,q}}$ and Lemma \ref{cen4}. Thus we obtain 
	\begin{align}\label{ieq1}
	{\left\| {{\mathcal{M}_\alpha }({{\vec f}^{\vec \beta }})} \right\|_{{M^{q,{\varphi _{1i}}}}(u_{\vec \omega }^q)}} =& \mathop {\sup }\limits_{{x_0} \in \rn,r > 0} {\varphi _2}{({x_0},r)^{ - 1}}\mathop {\sup }\limits_{t > r} \prod\limits_{i = 1}^m {\left\| {{\omega _i}} \right\|_{{L^{{q_i}}}(B({x_0},t))}^{ - 1}{{\left\| {{f_i}} \right\|}_{{L^{{p_i}}}(B({x_0},t),\omega _i^{{p_i}})}}}\notag\\
	\lesssim&\mathop {\sup }\limits_{{x_0} \in \rn,r > 0} \prod\limits_{i = 1}^m {{\varphi _{1i}}{{({x_0},r)}^{ - 1}}\left\| {{\omega _i}} \right\|_{{L^{{p_i}}}(B({x_0},r))}^{ - 1}{{\left\| {{f_i}} \right\|}_{{L^{{p_i}}}(B({x_0},r),\omega _i^{{p_i}})}}}\notag\\
	\le&\prod\limits_{i = 1}^m {{{\left\| {{f_i}} \right\|}_{{L^{{p_i},{\varphi _{1i}}}}(\omega _i^{{p_i}})}}},
	\end{align}
	where the third inequality holds since we combine (\ref{con5}) with $(\ref{Gu81})$ in Lemma \ref{Gu8}.
	
	For $\vec \beta = 0$, by applying Lemma \ref{LB1}, we have
	\begin{equation*}
	{\left\| {{\mathcal{M}_\alpha }({{\vec f}^0})} \right\|_{{L^q}(B,u_{\vec \omega }^q)}} \lesssim \prod\limits_{i = 1}^m {{{\left\| {f_i^0} \right\|}_{{L^{{p_i}}}(\omega _i^{{p_i}})}}}  = \prod\limits_{i = 1}^m {{{\left\| {{f_i}} \right\|}_{{L^p}(2B,\omega _i^{{p_i}})}}}.
	\end{equation*}
	Thus, we get
	\begin{align}\label{ieq2}
	{\left\| {{\mathcal{M}_\alpha }({{\vec f}^{0}})} \right\|_{{M^{q,{\varphi _{2}}}}(u_{\vec \omega }^q)}} =&\mathop {\sup }\limits_{{x_0} \in \rn,r > 0} {\varphi _2}{({x_0},r)^{ - 1}}u_{\vec \omega }^q{(B({x_0},r))^{ - \frac{1}{q}}}{\left\| {{M_\alpha }({{\vec f}^0})} \right\|_{{L^q}(B,u_{\vec \omega }^q)}}\notag\\
	\lesssim&\mathop {\sup }\limits_{{x_0} \in \rn,r > 0} {\varphi _2}{({x_0},r)^{ - 1}}\prod\limits_{i = 1}^m {\omega _i^{{q_i}}{{(B({x_0},r))}^{ - \frac{1}{{{q_i}}}}}{{\left\| {{f_i}} \right\|}_{{L^{{p_i}}}(B({x_0},2r),\omega _i^{{p_i}})}}}\notag\\
	\lesssim&\mathop {\sup }\limits_{{x_0} \in \rn,r > 0} {\varphi _2}{({x_0},r)^{ - 1}}\prod\limits_{i = 1}^m {\omega _i^{{q_i}}{{(B({x_0},2r))}^{ - \frac{1}{{{q_i}}}}}{{\left\| {{f_i}} \right\|}_{{L^{{p_i}}}(B({x_0},2r),\omega _i^{{p_i}})}}}\notag\\
	\lesssim&\mathop {\sup }\limits_{{x_0} \in {\rn},r > 0} {\varphi _2}{({x_0},r)^{ - 1}}\mathop {\sup }\limits_{t > r} \prod\limits_{i = 1}^m {\left\| {{\omega _i}} \right\|_{{L^{{q_i}}}(B({x_0},t))}^{ - 1}{{\left\| {{f_i}} \right\|}_{{L^{{p_i}}}(B({x_0},t),\omega _i^{{p_i}})}}}\notag\\
	\lesssim&\mathop {\sup }\limits_{{x_0} \in \rn,r > 0} \prod\limits_{i = 1}^m {{\varphi _{1i}}{{({x_0},r)}^{ - 1}}\left\| {{\omega _i}} \right\|_{{L^{{p_i}}}(B({x_0},r))}^{ - 1}{{\left\| {{f_i}} \right\|}_{{L^{{p_i}}}(B({x_0},r),\omega _i^{{p_i}})}}}\notag\\
	\le&\prod\limits_{i = 1}^m {{{\left\| {{f_i}} \right\|}_{{L^{{p_i},{\varphi _{1i}}}}(\omega _i^{{p_i}})}}},
	\end{align}
	where the third step is based on the double property of $A_{p}$ weight, see Lemma \ref{Gra1.1}. Combining (\ref{ieq1}) with (\ref{ieq2}), we can get 
	\begin{equation*}
	{\left\| {{\mathcal{M}_\alpha }(\vec f)} \right\|_{{M^{q,{\varphi _2}}}(u_{\vec \omega }^q)}} \le {\left\| {{M_\alpha }({{\vec f}^0})} \right\|_{{M^{q,{\varphi _2}}}(u_{\vec \omega }^q)}} + \sum\limits_{\vec \beta  \ne 0} {{{\left\| {{M_\alpha }({{\vec f}^{\vec \beta }})} \right\|}_{{M^{q,{\varphi _2}}}(u_{\vec \omega }^q)}}} \lesssim \prod\limits_{i = 1}^m {{{\left\| {{f_i}} \right\|}_{{L^{{p_i},{\varphi _{1i}}}}(\omega _i^{{p_i}})}}}. 
	\end{equation*}
	This finishes the proof of Theorem $\ref{FRM}$.
\end{proof}
\subsection{Proof of Theorem \ref{CFRM}}
For the sake of simplicity, we only consider the case when $m=2$.
\begin{proof}[Proof]
	For any ball $B=B(x_0,r)$, let $f_i=f^0_i+f^{\infty}_i$, where $f^0_i=f_i\chi_{2B}$, $i=1,\ldots,m$ and $\chi_{2B}$ denotes the characteristic function of $2B$. For any $z\in B$, $\vec \beta \ne 0$, we have 
	\begin{align}
	&{\mathcal{M}_{\alpha ,\prod {\vec b} }}({{\vec f}^{\vec \beta }})(z)\notag\\
	=& \mathop {\sup }\limits_{t > 0} \prod\limits_{i = 1}^2 {{{\left| {B(z,t)} \right|}^{ - 1 + \frac{\alpha_i }{{n}}}}\int_{B(z,t)} {\left| {({b_i}({y_i}) - {b_i}(z))f_i^{{\beta _i}}({y_i})} \right|d{y_i}} }\notag\\
	\lesssim&\mathop {\sup }\limits_{t > r} \prod\limits_{i = 1}^2 {{{\left| {B({x_0},2t)} \right|}^{ - 1 + \frac{\alpha_i }{{n}}}}\int_{B({x_0},2t)} {\left| {({b_i}({y_i}) - {b_i}(z))f_i^{{\beta _i}}({y_i})} \right|d{y_i}} }\notag\\
	\le&\mathop {\sup }\limits_{t > r} {\left| {B({x_0},t)} \right|^{ - 2 + \frac{\alpha }{n}}}\prod\limits_{i = 1}^2 {\left| {{b_i}(z) - {\mu _i}} \right|\int_{B({x_0},t)} {\left| {f_i^{}({y_i})} \right|d{y_i}} } \notag\\
	+&\mathop {\sup }\limits_{t > r} {\left| {B({x_0},t)} \right|^{ - 2 + \frac{\alpha }{n}}}\left| {{b_1}(z) - {\mu _1}} \right|\int_{D({x_0},2{c_0}t)} {\left| {f_1^{}({y_1})} \right|d{y_1}} \int_{B({x_0},t)} {\left| {({b_2}({y_2}) - {\mu _2})f_2^{}({y_2})} \right|d{y_2}}\notag\\
	+&\mathop {\sup }\limits_{t > r} {\left| {B({x_0},t)} \right|^{ - 2 + \frac{\alpha }{n}}}\left| {{b_2}(z) - {\mu _2}} \right|\int_{D({x_0},2{c_0}t)} {\left| {f_2^{}({y_2})} \right|d{y_2}} \int_{B({x_0},t)} {\left| {({b_1}({y_1}) - {\mu _1})f_1^{}({y_1})} \right|d{y_1}}\notag\\
	+&\mathop {\sup }\limits_{t > r} {\left| {B({x_0},t)} \right|^{ - 2 + \frac{\alpha }{n}}}\prod\limits_{i = 1}^2 {\int_{B({x_0},t)} {\left| {({b_i}({y_i}) - {\mu _i})f_i^{}({y_i})} \right|d{y_i}} }\notag\\
	:=&{N_1}(z) + {N_2}(z) + {N_3}(z) + {N_4}(z).
	\end{align}
	
	For ${N_1}$, by H\"older's inequality, $\vec{\omega} \in {A_{\vec P,q}}$ and Lemma \ref{cen4}, we can see 
	\begin{align}
	{N_1}(z)\notag
	\le& (\prod\limits_{i = 1}^2 {\left| {{b_i}(z) - {\mu _i}} \right|} )\mathop {\sup }\limits_{t > r} {\left| {B({x_0},t)} \right|^{ - 2 + \frac{\alpha }{n}}}\prod\limits_{i = 1}^2 {{{\left\| {{f_i}} \right\|}_{{L^{{P_i}}}(B({x_0},t),\omega _i^{{p_i}})}}{{\left\| {\omega _i^{ - 1}} \right\|}_{{L^{{p_i}^\prime }}(B({x_0},t),\omega _i^{{p_i}})}}}\notag\\
	\lesssim&(\prod\limits_{i = 1}^2 {\left| {{b_i}(z) - {\mu _i}} \right|} )\mathop {\sup }\limits_{t > r} \prod\limits_{i = 1}^m {\left\| {{\omega _i}} \right\|_{{L^{{q_i}}}(B({x_0},t))}^{ - 1}{{\left\| {{f_i}} \right\|}_{{L^{{p_i}}}(B({x_0},t),\omega _i^{{p_i}})}}},\notag
	\end{align}
	and then by Lemma \ref{Gu1}, we obtain
	\begin{align}
	&{\left\| {{N_1}} \right\|_{{L^q}(B,u_{\vec \omega }^q)}}\notag\\
	\le&\prod\limits_{i = 1}^2 {{{\left\| {{b_i}(z) - {\mu _i}} \right\|}_{{L^{2q}}(B,u_{\vec \omega }^q)}}} \mathop {\sup }\limits_{t > r} \prod\limits_{i = 1}^m {\left\| {{\omega _i}} \right\|_{{L^{{q_i}}}(B({x_0},t))}^{ - 1}{{\left\| {{f_i}} \right\|}_{{L^{{p_i}}}(B({x_0},t),\omega _i^{{p_i}})}}}\notag\\
	\lesssim&u_{\vec \omega }^q{(B)^{\frac{1}{q}}}\prod\limits_{i = 1}^2 {{{\left\| {{b_i}} \right\|}_{BMO}}} \mathop {\sup }\limits_{t > r} \prod\limits_{i = 1}^m {\left\| {{\omega _i}} \right\|_{{L^{{q_i}}}(B({x_0},t))}^{ - 1}{{\left\| {{f_i}} \right\|}_{{L^{{p_i}}}(B({x_0},t),\omega _i^{{p_i}})}}}.
	\end{align}
	
	For ${N_2}$, by H\"older's inequality, $\vec{\omega} \in {A_{\vec P,q}}$ and Lemma \ref{cen4}, we can see 
	\begin{align*}
	&{N_2}(z)\notag\\
	\le&\left| {{b_1}(z) - {\mu _1}} \right|\mathop {\sup }\limits_{t > r} {\left| {B({x_0},2t)} \right|^{ - 2 + \frac{\alpha }{n}}}{\left\| {\omega _1^{ - 1}} \right\|_{{L^{{p_1}^\prime }}(B({x_0},t))}}{\left\| {{b_2} - {\mu _2}} \right\|_{{L^{{p_i}^\prime }}(B({x_0},t),\omega _i^{ - {p_i}^\prime })}}\prod\limits_{i = 1}^2 {{{\left\| {{f_i}} \right\|}_{{L^{{p_i}}}(B({x_0},t),\omega _i^{{p_i}})}}}\notag\\
	\lesssim&\left| {{b_1}(z) - {\mu _1}} \right|{\left\| {{b_2}} \right\|_{\rm BMO}}\mathop {\sup }\limits_{t > r} (1 + \log \frac{t}{r})\prod\limits_{i = 1}^2 {{{\left\| {{f_i}} \right\|}_{{L^{{p_i}}}(B({x_0},t),\omega _i^{{p_i}})}}\left\| {{\omega _i}} \right\|_{{L^{{q_i}}}(B({x_0},t))}^{ - 1}},
	\end{align*}
	and then by Lemma \ref{Gu1}, we get
	\begin{equation}
	{\left\| {{N_2}} \right\|_{{L^q}(B,u_{\vec \omega }^q)}} \le u_{\vec \omega }^q{(B)^{\frac{1}{q}}}(\prod\limits_{i = 1}^2 {{{\left\| {{b_i}} \right\|}_{\rm BMO}}} )\mathop {\sup }\limits_{t > r} (1 + \log \frac{t}{r})\prod\limits_{i = 1}^2 {{{\left\| {{f_i}} \right\|}_{{L^{{p_i}}}(B({x_0},t),\omega _i^{{p_i}})}}\left\| {{\omega _i}} \right\|_{{L^{{q_i}}}(B({x_0},t))}^{ - 1}}.
	\end{equation}
	
	For ${N_4}$, similar to estimate $N_{1}(z)$ and $N_{2}(z)$, we have
	\begin{equation}
	{\left\| {{N_4}} \right\|_{{L^q}(B,u_{\vec \omega }^q)}} \lesssim u_{\vec \omega }^q{(B)^{\frac{1}{q}}}(\prod\limits_{i = 1}^2 {{{\left\| {{b_i}} \right\|}_{\rm BMO}}} )\mathop {\sup }\limits_{t > r} {\left( {1 + \log \frac{t}{r}} \right)^2}\prod\limits_{i = 1}^2 {{{\left\| {{f_i}} \right\|}_{{L^{{p_i}}}(B({x_0},t),\omega _i^{{p_i}})}}\left\| {{\omega _i}} \right\|_{{L^{{q_i}}}(B({x_0},t))}^{ - 1}}.
	\end{equation}
	
	Since the estimate of $N_3$ is similar to estimates of $N_2$, combining with above estimates, we can obtain
	\begin{align}\label{ieq3}
	&{\left\| {{\mathcal{M}_{\alpha ,\prod {\vec b} }}({{\vec f}^{\vec \beta }})} \right\|_{{M^{q,{\varphi _{1i}}}}(u_{\vec \omega }^q)}}\notag\\
	\lesssim&\prod\limits_{i = 1}^2 {{{\left\| {{b_i}} \right\|}_{\rm BMO}}} \mathop {\sup }\limits_{{x_0} \in \rn,r > 0} {\varphi _2}{({x_0},r)^{ - 1}}\mathop {\sup }\limits_{t > r} {\left( {1 + \log \frac{t}{r}} \right)^2}\prod\limits_{i = 1}^2 {\left\| {{\omega _i}} \right\|_{{L^{{q_i}}}(B({x_0},t))}^{ - 1}{{\left\| {{f_i}} \right\|}_{{L^{{p_i}}}(B({x_0},t),\omega _i^{{p_i}})}}}\notag\\
	\lesssim&\prod\limits_{i = 1}^2 {{{\left\| {{b_i}} \right\|}_{\rm BMO}}} \mathop {\sup }\limits_{{x_0} \in \rn,r > 0} \prod\limits_{i = 1}^m {{\varphi _{1i}}{{({x_0},r)}^{ - 1}}\left\| {{\omega _i}} \right\|_{{L^{{p_i}}}(B({x_0},t))}^{ - 1}{{\left\| {{f_i}} \right\|}_{{L^{{p_i}}}(B({x_0},t),\omega _i^{{p_i}})}}}\notag\\
	\le&\prod\limits_{i = 1}^2 {{{\left\| {{b_i}} \right\|}_{\rm BMO}}{{\left\| {{f_i}} \right\|}_{{L^{{p_i},{\varphi _{1i}}}}(\omega _i^{{p_i}})}}}.
	\end{align}
	
	For $\vec \beta = 0$, by applying Lemma \ref{LB2}, we have
	\begin{equation*}
	{\left\| {{\mathcal{M}_{\alpha ,\prod {\vec b} }}({{\vec f}^0})} \right\|_{{L^q}(B,u_{\vec \omega }^q)}} \lesssim \prod\limits_{i = 1}^m {{{\left\| {{b_i}} \right\|}_{\rm BMO}}{{\left\| {f_i^0} \right\|}_{{L^{{p_i}}}(\omega _i^{{p_i}})}}}  = \prod\limits_{i = 1}^m {{{\left\| {{b_i}} \right\|}_{\rm BMO}}{{\left\| {{f_i}} \right\|}_{{L^p}(2B,\omega _i^{{p_i}})}}}.
	\end{equation*}

	Thus, we give the following boundedness results
	\begin{align}\label{ieq4}
	&{\left\| {{\mathcal{M}_{\alpha ,\prod {\vec b} }}({{\vec f}^{0}})} \right\|_{{M^{q,{\varphi _{1i}}}}(u_{\vec \omega }^q)}}\notag\\
	=&\mathop {\sup }\limits_{{x_0} \in \rn,r > 0} {\varphi _2}{({x_0},r)^{ - 1}}u_{\vec \omega }^q{(B({x_0},r))^{ - \frac{1}{q}}}{\left\| {{M_\alpha }({{\vec f}^0})} \right\|_{{L^q}(B,u_{\vec \omega }^q)}}\notag\\
	\lesssim&\prod\limits_{i = 1}^m {{{\left\| {{b_i}} \right\|}_{\rm BMO}}}\mathop {\sup }\limits_{{x_0} \in \rn,r > 0} {\varphi _2}{({x_0},r)^{ - 1}}\prod\limits_{i = 1}^m {\omega _i^{{q_i}}{{(B({x_0},r))}^{ - \frac{1}{{{q_i}}}}}{{\left\| {{f_i}} \right\|}_{{L^{{p_i}}}(B({x_0},2r),\omega _i^{{p_i}})}}}\notag\\
	\lesssim&\prod\limits_{i = 1}^m {{{\left\| {{b_i}} \right\|}_{\rm BMO}}}\mathop {\sup }\limits_{{x_0} \in \rn,r > 0} {\varphi _2}{({x_0},r)^{ - 1}}\prod\limits_{i = 1}^m {\omega _i^{{q_i}}{{(B({x_0},2r))}^{ - \frac{1}{{{q_i}}}}}{{\left\| {{f_i}} \right\|}_{{L^{{p_i}}}(B({x_0},2r),\omega _i^{{p_i}})}}}\notag\\
	\lesssim&\prod\limits_{i = 1}^m {{{\left\| {{b_i}} \right\|}_{\rm BMO}}}\mathop {\sup }\limits_{{x_0} \in {\rn},r > 0} {\varphi _2}{({x_0},r)^{ - 1}}\mathop {\sup }\limits_{t > r} \prod\limits_{i = 1}^m {\left\| {{\omega _i}} \right\|_{{L^{{q_i}}}(B({x_0},t))}^{ - 1}{{\left\| {{f_i}} \right\|}_{{L^{{p_i}}}(B({x_0},t),\omega _i^{{p_i}})}}}\notag\\
	\lesssim&\prod\limits_{i = 1}^m {{{\left\| {{b_i}} \right\|}_{\rm BMO}}}\mathop {\sup }\limits_{{x_0} \in \rn,r > 0} \prod\limits_{i = 1}^m {{\varphi _{1i}}{{({x_0},r)}^{ - 1}}\left\| {{\omega _i}} \right\|_{{L^{{p_i}}}(B({x_0},r))}^{ - 1}{{\left\| {{f_i}} \right\|}_{{L^{{p_i}}}(B({x_0},r),\omega _i^{{p_i}})}}}\notag\\
	\le&\prod\limits_{i = 1}^m {{{\left\| {{b_i}} \right\|}_{\rm BMO}}}\prod\limits_{i = 1}^m {{{\left\| {{f_i}} \right\|}_{{L^{{p_i},{\varphi _{1i}}}}(\omega _i^{{p_i}})}}},
	\end{align}
	where we used Lemmas \ref{cen4}, \ref{Gra1.1}, \ref{Gu8} in the second, third, and fifth step, respectively.
	
	Then, we obtain
	\begin{align*}
	{\left\| {{\mathcal{M}_{\alpha ,\prod {\vec b} }}(\vec f)} \right\|_{{M^{q,{\varphi _2}}}(u_{\vec \omega }^q)}} \le& {\left\| {{\mathcal{M}_{\alpha ,\prod {\vec b} }}({{\vec f}^0})} \right\|_{{M^{q,{\varphi _2}}}(u_{\vec \omega }^q)}} + \sum\limits_{\vec \beta  \ne 0} {{{\left\| {{\mathcal{M}_{\alpha ,\prod {\vec b} }}({{\vec f}^{\vec \beta }})} \right\|}_{{M^{q,{\varphi _2}}}(u_{\vec \omega }^q)}}}\notag\\ 
	\lesssim& \prod\limits_{i = 1}^m {{{\left\| {{b_i}} \right\|}_{\rm BMO}}{{\left\| {{f_i}} \right\|}_{{L^{{p_i},{\varphi _{1i}}}}(\omega _i^{{p_i}})}}}.
	\end{align*}
	This completes the proof Theorem \ref{CFRM}.
\end{proof}

\subsection{Proof of Theorems \ref{SCFRM}--\ref{CCFRM2}}
We just need to prove Theorem \ref{CCFRM1} and the remaining proofs are similar.
\begin{proof}[Proof of Theorem \ref{CCFRM1}]
	Combined with Theorem \ref{CFRM}, we only need to prove necessity.
	Suppose that $${\left\| {\mathcal{M}_{\alpha, \prod \vec b }} \right\|_{\prod\limits_{i = 1}^m {{M^{{p_i},{\varphi _{1i}}}}\left( {{\omega _i}^{{p_i}}} \right)}  \to {M^{q,{\varphi _2}}}\left( {{u_{\vec \omega }}} \right)}} \lesssim \prod\limits_{i = 1}^m {{{\left\| {{b_i}} \right\|}_{\rm BMO}}}.$$
	For any $B=B(x_0,r)$, by simple calculations, we can obtain
	\begin{align*}
	{r^\alpha }\prod\limits_{i = 1}^m {\left| {{b_i}(x) - {{({b_i})}_B}} \right|}  \lesssim {M_{\alpha ,\prod {\vec b} }}({\vec \chi _B})(x).
	\end{align*}
	where ${{\vec \chi }_B} = ({\chi _B}, \ldots ,{\chi _B})$.
	
	Combining the above boundedness and Lemma \ref{Gu7}, we get
	\begin{align*}
	\frac{{{r^\alpha }}}{{{{\left\| {{u_{\vec \omega }}} \right\|}_{{L^q}(B)}}}}{\left\| {\prod\limits_{i = 1}^m {\left| {{b_i}(x) - {{({b_i})}_B}} \right|} } \right\|_{{L^q}(B,{u_{\vec \omega }}^q)}} \lesssim& {\varphi _2}({x_0},r){\left\| {{M_{\alpha ,\prod {\vec b} }}({{\vec \chi }_B})} \right\|_{{M^{q,{\varphi _2}}}({u_{\vec \omega }}^q)}}\notag\\ 
	\lesssim& {\varphi _2}({x_0},r)\prod\limits_{i = 1}^m {{{\left\| {{b_i}} \right\|}_{\rm BMO}}} {\left\| {{\chi _B}} \right\|_{{M^{{p_i},{\varphi _{1i}}}}(\omega _i^{{p_i}})}}\notag\\ 
	\approx& {\varphi _2}({x_0},r)\prod\limits_{i = 1}^m {{{\left\| {{b_i}} \right\|}_{\rm BMO}}} {\varphi _{1i}}{({x_0},r)^{ - 1}}.
	\end{align*}
	Thus, we get
	$$\mathop {\sup }\limits_{x \in {\rn},r > 0} {\varphi _2}{(x,r)^{ - 1}}\frac{{{{\left\| {\prod\limits_{i = 1}^m {({b_j} - {{({b_j})}_{B(x,r)}})} } \right\|}_{{L^q}(u_{\vec \omega }^q,B(x,r))}}}}{{{{\left\| {u_{\vec \omega }^{}} \right\|}_{{L^q}(B(x,r))}}\prod\limits_{i = 1}^m {{{\left\| {{b_i}} \right\|}_{\rm BMO}}} }}{r^\alpha }\prod\limits_{i = 1}^m {{\varphi _{1i}}(x,r)}  < \infty.$$
	Hence, we obtain the desired result.
\end{proof}

\subsection{Proof of Theorem \ref{thm7}}
The proof of (ii) is similar to the proof of (i), so we merely conside the proof of (i).
\begin{proof}[Proof]
	For any ${f_i} \in {M^{{p_i},{\varphi _{1i}}}}\left( {{{\omega _i}^{{p_i}}}} \right)$, let $f_i=f^0_i+f^{\infty}_i$, where $f^0_i=f_i\chi_{2B}$, $i=1,\ldots,m$ and $\chi_{2B}$ denotes the characteristic function of $2B$. For ${f_i} \in {M^{{p_i},{\varphi _{1i}}}}\left( {{{\omega _i}^{{p_i}}}} \right)$, we define the  
	\begin{align*}
	{{\cal I}_{\alpha ,m}}(\vec f)(x): = {{\cal I}_{\alpha ,m}}(f_1^0, \ldots ,f_m^0)(x){\rm{ + }}\sum\limits_{({\beta _1}, \ldots ,{\beta _m}) \ne 0} {{{\cal I}_{\alpha ,m}}(f_1^{{\beta _1}}, \ldots ,f_m^{{\beta _m}})(x)},
	\end{align*}
	where we can claim that ${{\cal I}_{\alpha ,m}}$ is well-defined on $\prod\limits_{i = 1}^m {{M^{{p_i},{\varphi _{1i}}}}\left( {{\omega _i}^{{p_i}}} \right)}$, see also Lemma 4.1 in \cite{Gu2}.
	
	Firstly, we use the piecewise integration technique to perform the following estimates. For any $x \in B=B(x_0,r)$ and $1 \le l \le m$, we have
	\begin{align}\label{ieq6}
	&{{\mathcal I}_{\alpha,m}}(f_1^\infty , \ldots ,f_\ell ^\infty ,f_{\ell  + 1}^0, \ldots ,f_m^0)(x)\notag\\ 
	\lesssim& {\int _{{{({\rn})}^l }\backslash {{(2B)}^l }}}\int_{{{(2B)}^{m -l }}} {\frac{{|{f_1}({y_1}) \cdots {f_m}({y_m})|}}{{{{(|x - {y_1}| +  \cdots  + |x - {y_m}|)}^{mn - \alpha }}}}} d{y_1} \cdots d{y_m}\notag\\ 
	\lesssim& (\prod\limits_{i =l  + 1}^m {\int_{2B} | } {f_i}({y_i})|{\mkern 1mu} d{y_i}) \times \sum\limits_{j = 1}^\infty  {\frac{1}{{|{2^{j + 1}}B{|^{m - \frac{\alpha }{n}}}}}} {\int _{{{({2^{j + 1}}B)}^l }\backslash {{({2^j}B)}^l }}}|{f_1}({y_1}) \cdots {f_l }({y_l })|{\mkern 1mu} d{y_1} \cdots d{y_l }\notag\\ 
	\lesssim &\sum\limits_{j = 1}^\infty  {\prod\limits_{i = 1}^m {\frac{1}{{{{\left| {{2^{j + 1}}B} \right|}^{1 - \frac{\alpha_i }{{n}}}}}}} \int_{{2^{j + 1}}B} {\left| {{f_i}({y_i})} \right|d{y_i}} }\\
	\lesssim&\sum\limits_{j = 1}^\infty  {{{\left( {{2^{j + 1}}r} \right)}^{\alpha  - mn}}\prod\limits_{i = 1}^m {{{\left\| {{f_i}} \right\|}_{{L^{{p_i}}}({2^{j + 1}}B,\omega _i^{{p_i}})}}{{\left\| {{\omega _i}^{ - 1}} \right\|}_{{L^{{p_i}^\prime }}({2^{j + 1}}B)}}} }\notag\\
	\le&\sum\limits_{j = 1}^\infty  {{{\left( {{2^{j + 1}}r} \right)}^{\alpha  - mn - 1}}\int_{{2^{j + 1}}r}^{{2^{j + 2}}r} {\prod\limits_{i = 1}^m {{{\left\| {{f_i}} \right\|}_{{L^{{p_i}}}(D({x_0},t),\omega _i^{{p_i}})}}{{\left\| {{\omega _i}^{ - 1}} \right\|}_{{L^{{p_i}^\prime }}(B({x_0},t))}}} dt} }\notag\\
	\lesssim&\int_{2r}^\infty  {\prod\limits_{i = 1}^m {{{\left\| {{f_i}} \right\|}_{{L^{{p_i}}}(B({x_0},t),\omega _i^{{p_i}})}}{{\left\| {{\omega _i}^{ - 1}} \right\|}_{{L^{{p_i}^\prime }}(B({x_0},t))}}} \frac{{dt}}{{{t^{mn - \alpha  + 1}}}}} \notag\\
	\lesssim&\int_{2r}^\infty  {\prod\limits_{i = 1}^m {{{\left\| {{f_i}} \right\|}_{{L^{{p_i}}}(B({x_0},t),\omega _i^{{p_i}})}}\left\| {{\omega _i}} \right\|_{{L^{{q_i}}}(B({x_0},t))}^{ - 1}} \frac{{dt}}{t}},
	\end{align}
	where the last step is based on the definition of $\vec{\omega} \in {A_{\vec P,q}}$ and Lemma \ref{cen4}. 
	
	On the other hand, by using H\"{o}lder's inequality, we have 
	\begin{align}\label{ieq5}
	&{\left\| {{\mathcal I}_{\alpha,m}(f_1^0, \ldots ,f_m^0)} \right\|_{{L^q}(B,{u_{\vec \omega }}^q)}}\notag\\
	\lesssim & \prod\limits_{i = 1}^m {{{\left\| {f_i^0} \right\|}_{{L^{{p_i}}}({\omega _i}^{{p_i}})}}}\notag\\
	\approx& {\left| B \right|^{m - \frac{\alpha }{n}}}\int_{2r}^\infty  {\frac{{dt}}{{{t^{mn - \alpha  + 1}}}}} \prod\limits_{i = 1}^m {{{\left\| {{f_i}} \right\|}_{{L^{{p_i}}}(2B,{\omega _i}^{{p_i}})}}}\notag\\
	\le & {\left| B \right|^{m - \frac{\alpha }{n}}}\int_{2r}^\infty  {\prod\limits_{i = 1}^m {{{\left\| {{f_i}} \right\|}_{{L^{{p_i}}}(B\left( {{x_0},t} \right),{\omega _i})}}} \frac{{dt}}{{{t^{mn - \alpha  + 1}}}}}\notag\\
	\le &\prod\limits_{i = 1}^m {({{\left\| {{\omega _i}} \right\|}_{{L^{{q_i}}}(B)}}{{\left\| {{\omega _i}^{ - 1}} \right\|}_{{L^{{p_i}^\prime }}(B)}}} )\int_{2r}^\infty  {\prod\limits_{i = 1}^m {{{\left\| {{f_i}} \right\|}_{{L^{{p_i}}}(B\left( {{x_0},t} \right),{\omega _i}^{{p_i}})}}} \frac{{dt}}{{{t^{mn - \alpha  + 1}}}}}\notag\\
	\lesssim & \prod\limits_{i = 1}^m {\left\| {{\omega _i}} \right\|_{{L^{{q_i}}}(B)}^{}} \int_{2r}^\infty  {\prod\limits_{i = 1}^m {({{\left\| {{f_i}} \right\|}_{{L^{{p_i}}}(B\left( {{x_0},t} \right),{\omega _i}^{{p_i}})}}\left\| {{\omega _i}^{ - 1}} \right\|_{{L^{{p_i}^\prime }}(B\left( {{x_0},t} \right))}^{})} {{\left| {B\left( {{x_0},t} \right)} \right|}^{ - m + \frac{\alpha }{n}}}\frac{{dt}}{t}}\notag\\
	\lesssim &\left\| {{u_{\vec \omega }}} \right\|_{{L^q}(B)}^{}\int_{2r}^\infty  {\prod\limits_{i = 1}^m {({{\left\| {{f_i}} \right\|}_{{L^{{p_i}}}(B\left( {{x_0},t} \right),{\omega _i}^{{p_i}})}}\left\| {{\omega _i}} \right\|_{{L^{{q_i}}}(B)}^{ - 1})} \frac{{dt}}{t}},
	\end{align}
	where  the last step holds since we used ${u_{\vec \omega }} \in {A_{\vec P,q}}$ and Lemma \ref{cen4}.
	
	Combining the above estimates, we can easy to get the boundedness of ${{\mathcal I}_{\alpha,m}}$ as follows
	\begin{align*}
	&{\left\| {{\mathcal I}_{\alpha,m}(\vec f)} \right\|_{{M^{q,{\varphi _2}}}\left( {{u}} \right)}}\\
	\le& {\left\| {{\mathcal I}_{\alpha,m}(f_1^0, \ldots ,f_m^0)} \right\|_{{M^{q,{\varphi _2}}}\left( {{u}} \right)}} + \sum\limits_{({\beta _1}, \ldots ,{\beta _m}) \ne 0} {{{\left\| {{\mathcal I}_{\alpha,m}(f_1^{{\beta _1}}, \cdots ,f_m^{{\beta _m}})} \right\|}_{{M^{q,{\varphi _2}}}\left( {{u}} \right)}}}\\
	\lesssim& \mathop {\sup }\limits_{{x_0} \in \rn,r > 0} {\varphi _2}{\left( {{x_0},r} \right)^{ - 1}}\int_{2r}^\infty  {\prod\limits_{i = 1}^m {{{\left\| {{f_i}} \right\|}_{{L^{{p_i}}}(D({x_0},t),\omega _i^{{p_i}})}}\left\| {{\omega _i}} \right\|_{{L^{{q_i}}}(D({x_0},t))}^{ - 1}} \frac{{dt}}{t}}\notag\\
	\lesssim&\mathop {\sup }\limits_{{x_0} \in \rn,r > 0} \prod\limits_{i = 1}^m {\left( {{\varphi _{1i}}{{\left( {{x_0},r} \right)}^{ - 1}}{{\left\| {{f_i}} \right\|}_{{L^{{p_i}}}(D({x_0},r),{\omega _i}^{{p_i}})}}{\omega _i}^{{p_i}}{{\left( {D({x_0},r)} \right)}^{ - \frac{1}{{{p_i}}}}}} \right)}\notag\\
	\le& \prod\limits_{i = 1}^m {{{\left\| {{f_i}} \right\|}_{{M^{{p_i},{\varphi _{1i}}}}({{\omega _i}^{{p_i}}})}}},
	\end{align*}
	where the third inequality holds since we used (\ref{con3}) and (\ref{Gu21}) in Lemma \ref{Gu2}.
\end{proof}

\subsection{Proof of Theorem \ref{CFRI1}}
Without loss of generality, for the sake of simplicity, we only consider the case when $m=2$.
\begin{proof}[Proof]
	For any ball $B=B(x_0,r)$, let $f_i=f^0_i+f^{\infty}_i$, where $f^0_i=f_i\chi_{2B}$, $i=1,\ldots,m$ and $\chi_{2B}$ denotes the characteristic function of $2B$. Then, we have
	\begin{equation*}
	\begin{split}
	&{{u_{\vec \omega }}^q}{\left( B \right)^{-{\frac{1}{q}}}}{\left\| {{{\mathcal I}_{\alpha,m,\prod {\vec b} }}({f_1},{f_2})} \right\|_{{L^q}(B,{u_{\vec \omega }}^q)}}\\
	\le& {{u_{\vec \omega }}^q}{\left( B \right)^{ - \frac{1}{q}}}{\left\| {{{\mathcal I}_{\alpha,m,\prod {\vec b} }}(f_1^0,f_2^0)} \right\|_{{L^q}\left( {B,{u_{\vec \omega }}^q} \right)}} + {{u_{\vec \omega }}^q}{\left( B \right)^{ - \frac{1}{q}}}{\left\| {{{\mathcal I}_{\alpha,m,\prod {\vec b} }}(f_1^0,f_2^\infty )} \right\|_{{L^q}\left( {B,{u_{\vec \omega }}^q} \right)}}\\
	+&{{u_{\vec \omega }}^q}{\left( B \right)^{ - \frac{1}{q}}}{\left\| {{{\mathcal I}_{\alpha,m,\prod {\vec b} }}(f_1^\infty ,f_2^0)} \right\|_{{L^q}\left( {B,{u_{\vec \omega }}^q} \right)}} + {{u_{\vec \omega }}^q}{\left( B \right)^{ - \frac{1}{q}}}{\left\| {{{\mathcal I}_{\alpha,m,\prod {\vec b} }}(f_1^\infty ,f_2^\infty )} \right\|_{{L^q}\left( {B,{u_{\vec \omega }}^q} \right)}}\\
	:=&{J_1}({x_0},r) + {J_2}({x_0},r) + {J_3}({x_0},r) + {J_4}({x_0},r).
	\end{split}
	\end{equation*}
	We first claim that for $i=1, 2, 3, 4$
	\begin{equation}\label{eq3.5}
	{J_i}({x_0},r) \le C \prod\limits_{i = 1}^2 {{{\left\| {{b_i}} \right\|}_{\rm BMO}}} \int_{2r}^\infty  {{{\left( {1 + \log \frac{t}{r}} \right)}^2}\prod\limits_{i = 1}^2 {\left( {{{\left\| {{f_i}} \right\|}_{{L^{{p_i}}}(B({x_0},t), {\omega _i}^{{p_i}})}}\left\| {{\omega _i}} \right\|_{{L^{{q_i}}}(B({x_0},t))}^{ - 1}} \right)\frac{{dt}}{t}} }, 
	\end{equation}
	where $C$ is independent of $r$, $x_0$ and $\vec f$. 
	
	When $\eqref{eq3.5}$ are valid, the proofs of boundedness are similar to the proof ideas in Theorem $\ref{FRM}$, which are given as follows
	\begin{align*}
	&{\left\| {{{{\mathcal I}_{\alpha,m,\prod {\vec b} }}}(\vec f)} \right\|_{{M^{q,{\varphi _2}}}\left( {{u_{\vec \omega }}^q} \right)}}\\
	\le& \mathop {\sup }\limits_{{x_0} \in \rn,r > 0} {\varphi _2}{\left( {{x_0},r} \right)^{ - 1}}\sum {J_i\left( {x_0},r \right)}\\
	\lesssim& \prod\limits_{i = 1}^2 {{{\left\| {{b_i}} \right\|}_{\rm BMO}}} \mathop {\sup }\limits_{{x_0} \in {\rn},r > 0} {\varphi _2}{\left( {{x_0},r} \right)^{ - 1}}\int_r^\infty  {{{\left( {1 + \log \frac{t}{r}} \right)}^2}\prod\limits_{i = 1}^2 {\left( {{{\left\| {{f_i}} \right\|}_{{L^{{p_i}}}(B({x_0},t),{\omega _i}^{{p_i}})}}\left\| {{\omega _i}} \right\|_{{L^{{q_i}}}(B({x_0},t))}^{ - 1}} \right)\frac{{dt}}{t}} }\\
	\lesssim& \prod\limits_{i = 1}^2 {{{\left\| {{b_i}} \right\|}_{\rm BMO}}} \mathop {\sup }\limits_{{x_0} \in {\rn},r > 0} \prod\limits_{i = 1}^2 {\left( {{\varphi _{1i}}{{\left( {{x_0},r} \right)}^{ - 1}}{{\left\| {{f_i}} \right\|}_{{L^{{p_i}}}(B({x_0},r),{\omega _i}^{{p_i}})}}\left\| {{\omega _i}} \right\|_{{L^{{p_i}}}(D({x_0},t))}^{ - 1}} \right)}\\
	\le& \prod\limits_{i = 1}^2 {{{\left\| {{b_i}} \right\|}_{\rm BMO}}} \prod\limits_{i = 1}^2 {{{\left\| {{f_i}} \right\|}_{{M^{{p_i},{\varphi _{1i}}}}({\omega _i}^{p_i})}}},
	\end{align*}
	where the third inequality holds since we used Lemma \ref{Gu3} and $(\ref{con4})$ to ensure (\ref{Gu31}) holds.
	
	From the above proof, we only need to verify the validity of (\ref{eq3.5}). Obviously, we can easily estimate $J_1$ to obtain $(\ref{eq3.5})$ by combining Lemma \ref{LB2} and the process of obtaining the $(\ref{ieq5})$. Note that ${J_{3}}$ is similar to ${J_{2}}$, so we merely consider to estimate ${J_{2}}$ and ${J_{4}}$.
	
	For $z \in B$, we have
	\begin{align}\label{eq3.7}
	& \left| {{{\mathcal I}_{\alpha,m,\prod {\vec b} }}(f_1^0,f_2^\infty )\left( z \right)} \right|\notag\\
	\le& \left| {\left( {{b_1}\left( z \right) - {\mu _1}} \right)\left( {{b_2}\left( z \right) - {\mu _2}} \right){\mathcal I}_{\alpha,m}(f_1^0,f_2^\infty )\left( z \right)} \right| + \left| {\left( {{b_1}\left( z \right) - {\mu _1}} \right){\mathcal I}_{\alpha,m}(f_1^0,\left( {{b_2} - {\mu _2}} \right)f_2^\infty )\left( z \right)} \right|\notag\\
	+& \left| {\left( {{b_2}\left( z \right) - {\mu _2}} \right){\mathcal I}_{\alpha,m}(\left( {{b_1} - {\mu _1}} \right)f_1^0,f_2^\infty )\left( z \right)} \right| + \left| {{\mathcal I}_{\alpha,m}(\left( {{b_1} - {\mu _1}} \right)f_1^0,\left( {{b_2} - {\mu _2}} \right)f_2^\infty )\left( z \right)} \right|\notag\\ 
	:=& {J_{21}}\left( z \right) + {J_{22}}\left( z \right) + {J_{23}}\left( z \right) + {J_{24}}\left( z \right),
	\end{align}
	where ${\mu _j} = {\left( {{b_j}} \right)_B}$.
	
	Using H\"older's inequality and Lemma \ref{Gu1}, we get 
	\begin{align}\label{eq3.8}
	&{\left\| {{J_{21}}} \right\|_{{L^q}\left( {B,{{u_{\vec \omega }}^q}} \right)}}\notag\\
	\le& {\left\| {\left( {{b_1} - {\mu _1}} \right)\left( {{b_2} - {\mu _2}} \right)} \right\|_{{L^q}\left( {B,{{u_{\vec \omega }}^q}} \right)}}{\left\| {{\mathcal I}_{\alpha,m}(f_1^0,f_2^\infty )} \right\|_{{L^\infty }\left( B \right)}}\notag\\
	\lesssim& \prod\limits_{i = 1}^2 {\left( {{{\left\| {{b_i} - {\mu _i}} \right\|}_{{L^{2q}}(B,{{u_{\vec \omega }}^q})}}} \right)} \int_{2r}^\infty  {\prod\limits_{i = 1}^m {{{\left\| {{f_i}} \right\|}_{{L^{{p_i}}}(B({x_0},t), \omega _i^{{p_i}})}}\left\| {{\omega _i}} \right\|_{{L^{{q_i}}}(B({x_0},t))}^{ - 1}} \frac{{dt}}{t}}\notag\\ 
	\lesssim&\left( {\prod\limits_{i = 1}^2 {{{\left\| {{b_i}} \right\|}_{\rm BMO}}} } \right){{{u_{\vec \omega }}^q}}{\left( B \right)^{\frac{1}{q}}} \int_{2r}^\infty  {\prod\limits_{i = 1}^m {{{\left\| {{f_i}} \right\|}_{{L^{{p_i}}}(B({x_0},t),\omega _i^{{p_i}})}}\left\| {{\omega _i}} \right\|_{{L^{{q_i}}}(B({x_0},t))}^{ - 1}} \frac{{dt}}{t}}.
	\end{align}
	For the term ${J_{22}}$, note that inequality $(\ref{ieq6})$ holds for $\mathcal{I}_{\alpha,m}$, and we use the piecewise integration technique again to get
	\begin{align}\label{eq3.9}
	&\left| {{{\mathcal I}_{\alpha,m}}(f_1^0,\left( {{b_2} - {\mu _2}} \right)f_2^\infty )\left( z \right)} \right|\notag\\
	\lesssim&\sum\limits_{j = 1}^\infty  {{{\left( {{2^{j + 1}}r} \right)}^{\alpha  - 2n}}\int_{{2^{j + 1}}B} {\int_{{2^{j + 1}}B} {\left| {{f_1}\left( {{y_1}} \right)\left( {{b_2}\left( {{y_2}} \right) - {\mu _2}} \right){f_2}\left( {{y_2}} \right)} \right|d{y_1}} d{y_2}} }\notag\\ 
	\le&\sum\limits_{j = 1}^\infty  {{{\left( {{2^{j + 1}}r} \right)}^{\alpha  - 2n}}{{\left\| {{f_1}} \right\|}_{{L^{{p_1}}}({2^{j + 1}}B,\omega _1^{{p_1}})}}{{\left\| {{\omega _1}^{ - 1}} \right\|}_{{L^{{p_1}^\prime }}({2^{j + 1}}B)}}{{\left\| {{f_2}} \right\|}_{{L^{{p_2}}}({2^{j + 1}}B,\omega _2^{{p_2}})}}{{\left\| {{b_2} - {\mu _2}} \right\|}_{{L^{{p_2}^\prime }}({2^{j + 1}}B,{\omega _2}^{ - {p_2}^\prime })}}}\notag\\
	\le&\sum\limits_{j = 1}^\infty  {{{\left( {{2^{j + 1}}r} \right)}^{\alpha  - 2n - 1}}\int_{{2^{j + 1}}r}^{{2^{j + 2}}r} {{{\left\| {{\omega _1}^{ - 1}} \right\|}_{{L^{{p_1}^\prime }}(B\left( {{x_0},t} \right))}}{{\left\| {{b_2} - {\mu _2}} \right\|}_{{L^{{p_2}^\prime }}(B\left( {{x_0},t} \right),{\omega _2}^{ - {p_2}^\prime })}}\prod\limits_{i = 1}^2 {{{\left\| {{f_i}} \right\|}_{{L^{{p_i}}}(B\left( {{x_0},t} \right),\omega _i^{{p_i}})}}} dt} }\notag\\
	\lesssim&\int_{2r}^\infty  {{{\left\| {{\omega _1}^{ - 1}} \right\|}_{{L^{{p_1}^\prime }}(B\left( {{x_0},t} \right))}}{{\left\| {{b_2} - {\mu _2}} \right\|}_{{L^{{p_2}^\prime }}(B\left( {{x_0},t} \right),{\omega _2}^{ - {p_2}^\prime })}}\prod\limits_{i = 1}^2 {{{\left\| {{f_i}} \right\|}_{{L^{{p_i}}}(B\left( {{x_0},t} \right),\omega _i^{{p_i}})}}} \frac{{dt}}{{{t^{2n - \alpha  + 1}}}}}\notag\\
	\lesssim& {{{\left\| {{b_2}} \right\|}_{\rm BMO}}} \int_{2r}^\infty  {\left( {1 + \log \frac{t}{r}} \right)\left( {\prod\limits_{i = 1}^2 {\left( {{{\left\| {{f_i}} \right\|}_{{L^{{p_i}}}(B\left( {{x_0},t} \right),\omega _i^{{p_i}})}}{{\left\| {{\omega _i}^{ - 1}} \right\|}_{{L^{{p_i}^\prime }}(B\left( {{x_0},t} \right))}}} \right)} } \right)\frac{{dt}}{{{t^{2n - \alpha  + 1}}}}}\notag\\
	\lesssim&{\left\| {{b_2}} \right\|_{\rm BMO}}\int_{2r}^\infty  {\left( {1 + \log \frac{t}{r}} \right)\left( {\prod\limits_{i = 1}^2 {\left( {{{\left\| {{f_i}} \right\|}_{{L^{{p_i}}}(B\left( {{x_0},t} \right),\omega _i^{{p_i}})}}\left\| {{\omega _i}} \right\|_{{L^{{q_i}}}(B\left( {{x_0},t} \right))}^{ - 1}} \right)} } \right)\frac{{dt}}{t}},
	\end{align}
	where the last step due to the definition of $\vec{\omega} \in {A_{\vec P,q}}$ and Lemma \ref{cen4}.
	Combining with Lemma \ref{Gu1}, it is easy to see that
	\begin{align}\label{eq3.10}
	&{\left\| {{J_{22}}} \right\|_{{L^q}\left( {B,{u_{\vec \omega }}^q} \right)}}\notag\\
	\le&{\left\| {\left( {{b_1} - {\mu _1}} \right)} \right\|_{{L^q}\left( {B,{u_{\vec \omega }}^q} \right)}}{\left\| {{\mathcal I}_{\alpha,m}(f_1^0,\left( {{b_2} - {\mu _2}} \right)f_2^\infty )} \right\|_{{L^\infty }\left( B \right)}}\notag\\
	\lesssim&{u_{\vec \omega }}^q{\left( B \right)^{\frac{1}{q}}}\left( {\prod\limits_{i = 1}^2 {{{\left\| {{b_i}} \right\|}_{BMO}}} } \right)\int_{2r}^\infty  {\left( {1 + \log \frac{t}{r}} \right)\left( {\prod\limits_{i = 1}^2 {\left( {{{\left\| {{f_i}} \right\|}_{{L^{{p_i}}}(B\left( {{x_0},t} \right),\omega _i^{{p_i}})}}\left\| {{\omega _i}} \right\|_{{L^{{q_i}}}(B\left( {{x_0},t} \right))}^{ - 1}} \right)} } \right)\frac{{dt}}{t}}.
	\end{align}
	Similarly, we also have
	\begin{align}\label{eq3.11}
	&{\left\| {{J_{23}}} \right\|_{{L^q}\left( {B,{u_{\vec \omega }}^q} \right)}}\notag\\
	\lesssim&{u_{\vec \omega }}^q{\left( B \right)^{\frac{1}{q}}}\left( {\prod\limits_{i = 1}^2 {{{\left\| {{b_i}} \right\|}_{BMO}}} } \right)\int_{2r}^\infty  {\left( {1 + \log \frac{t}{r}} \right)\left( {\prod\limits_{i = 1}^2 {\left( {{{\left\| {{f_i}} \right\|}_{{L^{{p_i}}}(B\left( {{x_0},t} \right),\omega _i^{{p_i}})}}\left\| {{\omega _i}} \right\|_{{L^{{q_i}}}(D\left( {{x_0},t} \right))}^{ - 1}} \right)} } \right)\frac{{dt}}{t}}.
	\end{align}
	For the term ${J_{24}}$, we use methods similar to getting $(\ref{eq3.9})$ to obtain
	\begin{align}\label{eq3.12}
	&{{J_{24}}}(z)\notag\\ 
	\lesssim&\sum\limits_{j = 1}^\infty  {{{\left( {{2^{j + 1}}r} \right)}^{\alpha  - 2n}}\prod\limits_{i = 1}^2 {\int_{{2^{j + 1}}B} {\left| {\left( {{b_i}\left( {{y_i}} \right) - {\mu _i}} \right){f_i}\left( {{y_i}} \right)} \right|d{y_i}} } }\notag\\ 
	\lesssim& \int_{2r}^\infty  {\prod\limits_{i = 1}^2 {{{\left\| {{f_i}} \right\|}_{{L^{{p_i}}}(B\left( {{x_0},t} \right),\omega _i^{{p_i}})}}} {{\left\| {{b_i} - {\mu _i}} \right\|}_{{L^{{p_i}^\prime }}(B\left( {{x_0},t} \right),{\omega _i}^{ - {p_i}^\prime })}}\frac{{dt}}{{{t^{2n - \alpha  + 1}}}}}\notag\\
	\lesssim&\left( {\prod\limits_{i = 1}^2 {{{\left\| {{b_i}} \right\|}_{\rm BMO}}} } \right)\int_{2r}^\infty  {{{\left( {1 + \log \frac{t}{r}} \right)}^2}\left( {\prod\limits_{i = 1}^2 {\left( {{{\left\| {{f_i}} \right\|}_{{L^{{p_i}}}(B\left( {{x_0},t} \right),\omega _i^{{p_i}})}}\left\| {{\omega _i}} \right\|_{{L^{{q_i}}}(B\left( {{x_0},t} \right))}^{ - 1}} \right)} } \right)\frac{{dt}}{t}}.
	\end{align}
	The above estimates allows us to obtain
	\begin{align}\label{eq3.13}
	&{\left\| {{J_{24}}} \right\|_{{L^q}\left( {B,{u_{\vec \omega }}^q} \right)}}\notag\\
	\lesssim&{u_{\vec \omega }}^q{\left( B \right)^{\frac{1}{q}}}\left( {\prod\limits_{i = 1}^2 {{{\left\| {{b_i}} \right\|}_{\rm BMO}}} } \right)\int_{2r}^\infty  {{{\left( {1 + \log \frac{t}{r}} \right)}^2}\left( {\prod\limits_{i = 1}^2 {\left( {{{\left\| {{f_i}} \right\|}_{{L^{{p_i}}}(B\left( {{x_0},t} \right),\omega _i^{{p_i}})}}\left\| {{\omega _i}} \right\|_{{L^{{q_i}}}(D\left( {{x_0},t} \right))}^{ - 1}} \right)} } \right)\frac{{dt}}{t}}.
	\end{align}
	By using $(\ref{eq3.8})$, $(\ref{eq3.10})$, $(\ref{eq3.11})$ and $(\ref{eq3.13})$, we can obtain the estimates of $J_{2}$:
	\begin{align}\label{eq3.14}
	&{J_2}({x_0},r)\notag\\ 
	\lesssim&\left( {\prod\limits_{i = 1}^2 {{{\left\| {{b_i}} \right\|}_{\rm BMO}}} } \right)\int_{2r}^\infty  {{{\left( {1 + \log \frac{t}{r}} \right)}^2}\left( {\prod\limits_{i = 1}^2 {\left( {{{\left\| {{f_i}} \right\|}_{{L^{{p_i}}}(B\left( {{x_0},t} \right),\omega _i^{{p_i}})}}\left\| {{\omega _i}} \right\|_{{L^{{q_i}}}(D\left( {{x_0},t} \right))}^{ - 1}} \right)} } \right)\frac{{dt}}{t}}.
	\end{align}
	Similar to deal with ${J_{4}}$, we obtain the desired result. Indeed,  we first use a decomposition similar to $(\ref{eq3.7})$, and then we give respectively some estimates similar to the above for each part. Thus, we can obtain (\ref{eq3.5}). This finishes the proof of Theorem $\ref{CFRI1}$.
\end{proof}
\section{Some applications}\label{sec4}
The most basic differential operator is the sub-Laplacian associated with $X_i$ is the second-order partial differential operator given by ${\mathcal L} = \sum\limits_{i = 1}^\gamma  {X_i^2}$, see \cite{Gu8} for more details. From Theorem \ref{thm7}, we can easily obtains an inequality extending the classical Sobolev embedding theorem to generalized weighted Morrey spaces. We only present the results, whose proofs are similar to that in \cite{Gu8}, which we omit here.

\begin{thm}[Sobolev-Stein embedding theorem on generalized weighted Morrey spaces]
	Let $1 < p < \infty$, $\frac{1}{p} - \frac{1}{q} = \frac{1}{n}$, $\omega  \in {A_{p,q}}$ and a group of non-negative measurable functions $({{\varphi }_1},{\varphi _2})$ satisfy the condition:
	\begin{equation}\label{con8}
	{\left[ {{\varphi _1},{\varphi _2}} \right]_B}: = \mathop {\sup }\limits_{x \in {\rn},r > 0} {\varphi _2}{(x,r)^{{\rm{ - }}1}}\int_r^\infty  {\frac{{\mathop {{\rm{essinf}}}\limits_{t < \eta  < \infty } {\varphi _1}(x,\eta ){{\left\| \omega  \right\|}_{{L^{{p_i}}}(D(x,\eta ))}}}}{{{{\left\| \omega  \right\|}_{{L^{{q_i}}}(D(x,t))}}}}\frac{{dt}}{t}}  < \infty.
	\end{equation}
	Then for $f \in C_c^\infty $, we have 
	\begin{align*}
	{\left\| f \right\|_{{M^{q,{\varphi _2}}}({\omega ^q})}} \le {\left\| {{\nabla _{\mathcal L}}(f)} \right\|_{{M^{p,{\varphi _1}}}({\omega ^p})}}
	\end{align*}
\end{thm}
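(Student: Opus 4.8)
The plan is to deduce the embedding from the boundedness of the order-one fractional integral established in Theorem~\ref{thm7} for $m=1$. The analytic heart of the matter is a pointwise representation formula: for every $f \in C_c^\infty$ one has
\begin{equation*}
|f(x)| \lesssim \mathcal{I}_{1,1}(|\nabla_{\mathcal L} f|)(x),
\end{equation*}
where $\mathcal{I}_{1,1}$ denotes the single ($m=1$) fractional integral operator of order $\alpha = 1$. To obtain this, I would write $f$ against the fundamental solution $\Gamma$ of $\mathcal L$, so that $f = \Gamma * (\mathcal L f)$, and then integrate by parts to transfer the two horizontal derivatives onto the kernel, yielding $f = -\sum_{i=1}^{\gamma}(X_i\Gamma) * (X_i f)$. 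The homogeneity of $\mathcal L$ forces the kernel estimate $|X_i \Gamma(x)| \lesssim |x|^{1-n}$, and summing over $i$ produces exactly the Riesz-potential-type bound above, the exponent $mn-\alpha = n-1$ matching the kernel of $\mathcal{I}_{1,1}$.

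With this pointwise domination in hand, I would apply Theorem~\ref{thm7}(i) specialized to $m=1$, $\alpha=1$, $p_1=p$, $q_1=q$ and $\omega_1=\omega$. Under these choices one checks $\tfrac{\alpha}{n}=\tfrac1p-\tfrac1q=\tfrac1n\in(0,1)$, while $\omega\in A_{p,q}$ together with Lemma~\ref{lem4} guarantees $\omega^{q}\in A_\infty$ and $u_{\vec\omega}=\omega$; moreover the admissibility condition~\eqref{con3} collapses to precisely~\eqref{con8}. Since $\min_{k}p_k = p>1$, Theorem~\ref{thm7}(i) yields the strong-type bound $\mathcal{I}_{1,1}\in B\big(M^{p,\varphi_1}(\omega^{p})\to M^{q,\varphi_2}(\omega^{q})\big)$.

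Chaining the two facts gives
\begin{equation*}
\|f\|_{M^{q,\varphi_2}(\omega^{q})} \lesssim \big\|\mathcal{I}_{1,1}(|\nabla_{\mathcal L} f|)\big\|_{M^{q,\varphi_2}(\omega^{q})} \lesssim \|\nabla_{\mathcal L} f\|_{M^{p,\varphi_1}(\omega^{p})},
\end{equation*}
which is the asserted embedding, the displayed constant being normalized after tracking the implied constants exactly as in \cite{Gu8}. The one genuinely nonroutine step is the representation formula in the sub-Laplacian setting: it requires the existence together with the correct homogeneous size and gradient estimates for the fundamental solution of $\mathcal L$, and the integration-by-parts step that legitimately moves both vector fields onto $\Gamma$. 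Everything after that is a direct invocation of Theorem~\ref{thm7}, so I expect no further difficulty.
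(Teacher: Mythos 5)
Your proposal is correct and is essentially the paper's intended argument: the paper omits the proof, stating it is ``similar to that in \cite{Gu8}'', and that reference proceeds exactly as you do, via the pointwise representation $|f|\lesssim \mathcal{I}_{1}(|\nabla_{\mathcal L}f|)$ obtained from the fundamental solution of $\mathcal L$, followed by the $m=1$ fractional-integral boundedness, with condition (\ref{con3}) collapsing to (\ref{con8}) just as you check. The only formal caveat is that Theorem \ref{thm7} is stated for $m\ge 2$, so the $m=1$ instance you invoke is, strictly speaking, the result of \cite{Gu8} (as the paper's own remark notes), though the proof of Theorem \ref{thm7} carries over verbatim to $m=1$.
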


\begin{defn}
	The generalized weighted Besov-Morrey spaces are defined by
	\begin{align*}
	BM_s^{p\theta ,\varphi }(\omega ) = \{ f:{\left\| f \right\|_{BM_s^{p\theta ,\varphi }(\omega )}} = {\left\| f \right\|_{M_{}^{p,\varphi }(\omega )}} + {(\int_{\rn} {\frac{{\left\| {f(x \cdot ) - f( \cdot )} \right\|_{M_{}^{p,\varphi }(\omega )}^\theta }}{{\rho {{(x)}^{n + s\theta }}}}} dx)^{\frac{1}{\theta }}} < \infty \},
	\end{align*}
	where $1 \le p,\theta  \le \infty$, $0 < s < 1$.
\end{defn}

In the following theorem, we prove the boundedness of ${\mathcal I}_\alpha$ on generalized weighted Besov-Morrey spaces, whose proofs are similar to the case that $m=1$, which has been proved in \cite{Gu8}, so we omit it here.

\begin{thm}
	Let $m\geq 2$, $1< p_i<\infty$, $i=1,2,\ldots,m$ with $\frac{1}{p} = \sum\limits_{i = 1}^m {\frac{1}{{{p_i}}}}$,  $\frac{\alpha }{{mn}}{\rm{ = }}\frac{1}{{{p_i}}}{\rm{ - }}\frac{1}{{{q_i}}} \in (0,1)$, $\frac{1}{q} = \sum\limits_{i = 1}^m {\frac{1}{{{q_i}}}}$, $\vec{\omega}=(\omega_1,\ldots,\omega_m) \in {A_{\vec P,q}}$ with ${\omega _i}^{{q_i}} \in {A_\infty }$, $i = 1, \ldots ,m$, and a group of non-negative measurable functions $({{\vec \varphi }_1},{\varphi _2}) = ({\varphi _{11}}, \ldots ,{\varphi _{1m}},{\varphi _2})$ satisfy the condition $(\ref{con3})$
	If $1 \le \theta  \le \infty$ and $0 < s < 1$, then we have 
	$${{\mathcal I}_\alpha } \in B\left( {\prod\limits_{i = 1}^m {BM_s^{{p_i}\theta ,{\varphi _{1i}}}({\omega _i}^{{p_i}})}  \to BM_s^{q\theta ,{\varphi _2}}({u_{\vec \omega }}^q)} \right).$$
\end{thm}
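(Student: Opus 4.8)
The plan is to use the definition of the Besov--Morrey norm, which splits as
\[
\|f\|_{BM_s^{p\theta,\varphi}(\omega)} = \|f\|_{M^{p,\varphi}(\omega)} + [f]_{s,\theta},\qquad [f]_{s,\theta}:=\Bigl(\int_{\rn}\rho(x)^{-n-s\theta}\,\|f(x\cdot)-f(\cdot)\|_{M^{p,\varphi}(\omega)}^\theta\,dx\Bigr)^{1/\theta},
\]
and to control the two pieces separately. The Morrey piece is immediate: applying Theorem~\ref{thm7} under the hypothesis (\ref{con3}) already gives
\[
\|\mathcal{I}_{\alpha,m}(\vec f)\|_{M^{q,\varphi_2}(u_{\vec\omega}^q)} \lesssim \prod_{i=1}^m \|f_i\|_{M^{p_i,\varphi_{1i}}(\omega_i^{p_i})} \le \prod_{i=1}^m \|f_i\|_{BM_s^{p_i\theta,\varphi_{1i}}(\omega_i^{p_i})},
\]
so the whole effort goes into the difference seminorm $[\,\mathcal{I}_{\alpha,m}(\vec f)\,]_{s,\theta}$.

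For the seminorm, I would first exploit that $\mathcal{I}_{\alpha,m}$ has a translation-covariant (convolution-type) kernel, so that $\mathcal{I}_{\alpha,m}(\vec f)(x\cdot\,)=\mathcal{I}_{\alpha,m}(\tau_x f_1,\dots,\tau_x f_m)$, where $\tau_x g:=g(x\cdot\,)$. Writing $\tau_x f_i=f_i+(\tau_x f_i-f_i)$ and expanding the $m$-linear operator by the telescoping identity $\prod a_i-\prod b_i=\sum_k(\prod_{i<k}a_i)(a_k-b_k)(\prod_{i>k}b_i)$ yields
\[
\mathcal{I}_{\alpha,m}(\vec f)(x\cdot\,)-\mathcal{I}_{\alpha,m}(\vec f)(\cdot)=\sum_{k=1}^m \mathcal{I}_{\alpha,m}\bigl(\tau_x f_1,\dots,\tau_x f_{k-1},\,\tau_x f_k-f_k,\,f_{k+1},\dots,f_m\bigr).
\]
Applying Theorem~\ref{thm7} to each summand, then taking the $\rho(x)^{-n-s\theta}\,dx$ average and invoking Minkowski's integral inequality (valid for $1\le\theta\le\infty$), the factor $\tau_x f_k-f_k$ reproduces exactly the seminorm $[f_k]_{s,\theta}$ while the remaining factors stay as Morrey norms, giving
\[
[\,\mathcal{I}_{\alpha,m}(\vec f)\,]_{s,\theta}\lesssim \sum_{k=1}^m [f_k]_{s,\theta}\prod_{i\neq k}\sup_{x}\|\tau_x f_i\|_{M^{p_i,\varphi_{1i}}(\omega_i^{p_i})}.
\]
Combining this with the Morrey estimate finishes the proof once the supremum over $x$ is absorbed.

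The main obstacle, and the only genuine departure from the linear ($m=1$) case treated in \cite{Gu8}, is precisely the appearance of the \emph{bare} translates $\tau_x f_i$ for $i<k$ in the telescoping terms: unlike the single difference $\tau_x f_k-f_k$, these must be dominated by $\|f_i\|_{M^{p_i,\varphi_{1i}}(\omega_i^{p_i})}$ \emph{uniformly} in the translation parameter $x$. In the linear case no bare translate survives the expansion, so this difficulty is absent and the argument reduces to a one-line application of linearity. I would resolve it by using the left-invariance of the Haar measure together with the translation invariance of the $A_{\vec P,q}$ class and of the structural hypotheses on $(\vec\varphi_1,\varphi_2)$, which keep the boundedness constant of Theorem~\ref{thm7} independent of $x$ and hence yield $\sup_{x}\|\tau_x f_i\|_{M^{p_i,\varphi_{1i}}(\omega_i^{p_i})}\lesssim \|f_i\|_{M^{p_i,\varphi_{1i}}(\omega_i^{p_i})}$. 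With this uniformity in hand, the displayed bound becomes $\lesssim\prod_{i=1}^m\|f_i\|_{BM_s^{p_i\theta,\varphi_{1i}}(\omega_i^{p_i})}$, which is the claimed boundedness of $\mathcal{I}_\alpha$ on generalized weighted Besov--Morrey spaces.
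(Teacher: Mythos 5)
Your overall architecture is sound and is the natural multilinearization of the $m=1$ argument in \cite{Gu8} that the paper itself points to (the paper omits the proof of this theorem entirely, so there is no written argument to compare against): the Morrey part via Theorem \ref{thm7}, the translation covariance $\mathcal{I}_{\alpha,m}(\vec f)(x\,\cdot)=\mathcal{I}_{\alpha,m}(\tau_x f_1,\dots,\tau_x f_m)$, and a telescoping expansion of the $m$-linear operator are all correct steps. The gap is exactly at the point you yourself single out as the crux, and your proposed resolution does not work: the claimed uniform bound $\sup_{x}\|\tau_x f_i\|_{M^{p_i,\varphi_{1i}}(\omega_i^{p_i})}\lesssim\|f_i\|_{M^{p_i,\varphi_{1i}}(\omega_i^{p_i})}$ does not follow from translation invariance of the class $A_{\vec P,q}$, and it is false in general. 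A change of variables gives the exact identity
\begin{equation*}
\|\tau_x f\|_{M^{p,\varphi}(\omega^p)}=\|f\|_{M^{p,\varphi(\cdot-x,\cdot)}\left((\tau_{-x}\omega)^p\right)},
\end{equation*}
i.e.\ the norm of a translate is the norm of $f$ in a \emph{translated} space, with translated weight and translated $\varphi$. Invariance of $A_{\vec P,q}$ under translation only says that $\tau_{-x}\vec\omega$ lies in the same class with the same characteristic; it yields no comparison between the fixed-space norm and the translated-space norm, and neither does condition (\ref{con3}), which is imposed for the fixed pair $(\vec\varphi_1,\varphi_2)$ only. Since the theorem allows $\varphi_{1i}(x,r)$ and $\omega_i$ to depend genuinely on the space variable, the supremum can be infinite: for instance, with $\omega_i\equiv1$ and an admissible $\varphi_{1i}$ for which $\varphi_{1i}(\cdot,1)$ decays at infinity, a fixed bump $f_i$ has $\|\tau_x f_i\|_{M^{p_i,\varphi_{1i}}}\to\infty$ as its mass is moved into the region where $\varphi_{1i}$ is small; power weights produce similar failures. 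As you correctly note, in the linear case no bare translate survives, which is precisely why ``similar to $m=1$'' is not by itself a proof here.

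Nor is the gap repaired by the obvious alternative expansion: writing $\tau_x f_i=f_i+(\tau_x f_i-f_i)$ and expanding fully over subsets $S\subseteq\{1,\dots,m\}$ eliminates all bare translates, leaving only difference factors (slots in $S$) and untranslated $f_i$ (slots outside $S$). The terms with $|S|=1$ then integrate against $\rho(x)^{-n-s\theta}\,dx$ to exactly the seminorms $[f_k]_{s,\theta}$, as in your display; but the terms with $|S|\ge2$ contain products of two or more differences $\|\tau_x f_i-f_i\|_{M^{p_i,\varphi_{1i}}(\omega_i^{p_i})}$ under a single weight $\rho(x)^{-n-s\theta}$, and one cannot distribute this weight by H\"older so as to reproduce the $BM_s^{p_i\theta,\varphi_{1i}}$ seminorms (the additive $-n$ in the exponent does not split), while bounding all but one difference by $\|\tau_x f_i\|+\|f_i\|$ reintroduces exactly the uncontrolled translate. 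So a correct proof needs an additional ingredient — e.g.\ a hypothesis making the data translation-uniform (such as $\varphi_{1i}(x,r)=\varphi_{1i}(r)$ together with translation-uniform control of the weights, or condition (\ref{con3}) assumed uniformly over all translates of $(\vec\varphi_1,\vec\omega)$), under which your argument goes through verbatim. As written, the final absorption step is unjustified, and this is a genuine, multilinear-specific gap rather than a routine detail.
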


From the above results, we can get the following Sobolev-Stein embedding theorem on generalized weighted Besov-Morrey spaces.

\begin{thm}[Sobolev-Stein embedding theorem on generalized weighted Besov-Morrey spaces]
	Let $1 < p < \infty$, $\frac{1}{p} - \frac{1}{q} = \frac{1}{n}$, $\omega  \in {A_{p,q}}$ and a group of non-negative measurable functions $({{\varphi }_1},{\varphi _2})$ satisfy the condition $(\ref{con8})$.
	If $1 \le \theta  \le \infty$ and $0 < s < 1$, then for $f \in C_c^\infty $, we have 
	\begin{align*}
	{\left\| f \right\|_{BM_s^{q\theta ,{\varphi _2}}({\omega ^q})}} \lesssim {\left\| {{\nabla _{\mathcal L}}(f)} \right\|_{BM_s^{p\theta ,{\varphi _1}}({\omega ^p})}}
	\end{align*}
\end{thm}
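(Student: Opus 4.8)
The plan is to deduce the embedding from the pointwise representation
$$|f(x)| \lesssim {\mathcal I}_1(|\nabla_{\mathcal L} f|)(x), \qquad f \in C_c^\infty,$$
which is the order-one fractional-integral bound associated with the fundamental solution of $\mathcal L$ (the same representation used for the case $m=1$ in \cite{Gu8}), combined with the boundedness of the order-one potential ${\mathcal I}_1$ on the generalized weighted Morrey scale. Since the Besov--Morrey norm splits as $\|f\|_{BM_s^{q\theta,\varphi_2}(\omega^q)} = \|f\|_{M^{q,\varphi_2}(\omega^q)} + \big(\int_{\rn} \rho(x)^{-n-s\theta}\|f(x\cdot)-f(\cdot)\|_{M^{q,\varphi_2}(\omega^q)}^\theta\,dx\big)^{1/\theta}$, I would estimate the Morrey part and the difference seminorm separately.

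First I would bound the Morrey part, which is precisely the statement of the preceding Sobolev--Stein embedding theorem on generalized weighted Morrey spaces: under $\omega \in A_{p,q}$ and condition $(\ref{con8})$ (equivalently condition $(\ref{con3})$ specialized to $m=1$, $\alpha=1$, so that Theorem \ref{thm7} applies to ${\mathcal I}_1$), one has $\|f\|_{M^{q,\varphi_2}(\omega^q)} \lesssim \|\nabla_{\mathcal L} f\|_{M^{p,\varphi_1}(\omega^p)}$. For the difference seminorm, the crucial observation is that $\nabla_{\mathcal L}$ commutes with the left translation entering $f(x\cdot)$, so that for each fixed $x$
$$\nabla_{\mathcal L}\big(f(x\cdot)-f(\cdot)\big) = (\nabla_{\mathcal L} f)(x\cdot) - (\nabla_{\mathcal L} f)(\cdot).$$
Applying the pointwise representation to the difference function $g_x := f(x\cdot)-f(\cdot)$ gives $|g_x| \lesssim {\mathcal I}_1\big(|(\nabla_{\mathcal L} f)(x\cdot) - (\nabla_{\mathcal L} f)(\cdot)|\big)$, and then the Morrey boundedness of ${\mathcal I}_1$ yields, uniformly in $x$,
$$\|f(x\cdot)-f(\cdot)\|_{M^{q,\varphi_2}(\omega^q)} \lesssim \|(\nabla_{\mathcal L} f)(x\cdot) - (\nabla_{\mathcal L} f)(\cdot)\|_{M^{p,\varphi_1}(\omega^p)}.$$

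I would then raise this inequality to the $\theta$-th power, multiply by $\rho(x)^{-n-s\theta}$, integrate in $x$ over $\rn$ and take the $\theta$-th root; by definition the left-hand side becomes the difference seminorm of $f$ on $BM_s^{q\theta,\varphi_2}(\omega^q)$ and the right-hand side that of $\nabla_{\mathcal L} f$ on $BM_s^{p\theta,\varphi_1}(\omega^p)$ (the case $\theta=\infty$ is the obvious supremal modification). Adding the two estimates gives the claimed bound on $\|f\|_{BM_s^{q\theta,\varphi_2}(\omega^q)}$. The main obstacle I anticipate is the transfer of the pointwise representation to the finite-difference function: one must justify that $\nabla_{\mathcal L}$ genuinely commutes with the group translation $f \mapsto f(x\cdot)$ and that the representation through the fundamental solution of $\mathcal L$ is translation-covariant, so that the order-one potential bound can be applied uniformly in $x$. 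One cannot instead argue by monotonicity of the full Besov--Morrey norm, since $|f| \le g$ does not control the finite differences of $f$ by those of $g$; it is exactly the commutation property that circumvents this.
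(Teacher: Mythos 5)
Your proposal is correct, and it fills in precisely the step the paper leaves implicit: the paper offers no written proof, saying only that the result follows ``from the above results'' --- i.e.\ from the (also unproved, deferred to \cite{Gu8}) boundedness of ${\mathcal I}_\alpha$ on generalized weighted Besov--Morrey spaces together with the representation of $f$ through the fundamental solution of $\mathcal L$. Your route differs in where the translation invariance is exploited: the paper's intended argument commutes the convolution operator ${\mathcal I}_\alpha$ with left translations (that is how the Besov--Morrey boundedness of ${\mathcal I}_\alpha$ is obtained in \cite{Gu8} for $m=1$) and then feeds a representation of $f$ into that theorem, whereas you commute $\nabla_{\mathcal L}$ with left translations, writing $\nabla_{\mathcal L}\bigl(f(x\cdot)-f(\cdot)\bigr)=(\nabla_{\mathcal L}f)(x\cdot)-(\nabla_{\mathcal L}f)(\cdot)$, apply the pointwise bound $|g_x|\lesssim {\mathcal I}_1(|\nabla_{\mathcal L}g_x|)$ to each difference $g_x=f(x\cdot)-f(\cdot)\in C_c^\infty$, and invoke only the Morrey-level embedding (condition (\ref{con8}), the $m=1$, $\alpha=1$ instance of (\ref{con3})) uniformly in $x$. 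Both arguments rest on the same left-invariance mechanism, but yours buys a genuine simplification and a repair: to apply the paper's Besov--Morrey theorem for ${\mathcal I}_\alpha$ rigorously one needs a linear representation $f=\sum_i T_i(X_i f)$ with kernels dominated by the Riesz kernel, since the scalar inequality $|f|\lesssim{\mathcal I}_1(|\nabla_{\mathcal L}f|)$ alone does not control the difference seminorm --- exactly the pointwise-domination pitfall you flag. In your version, pointwise domination is only ever used inside a Morrey norm, where monotonicity does hold, so no representation identity beyond the standard $C_c^\infty$ bound is required; the price is that you re-run the translation argument rather than quoting the ${\mathcal I}_\alpha$ theorem as a black box. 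The remaining ingredients (the $\theta=\infty$ supremal modification, $g_x\in C_c^\infty$ because left translation preserves compact support, and linearity of $\nabla_{\mathcal L}$) are all as you state, so the outline is sound.
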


The apriori estimates can follows from the above results for the sub-Laplacian $\mathcal L$.
\begin{thm}
	Let $1 < p < \infty$, $\omega  \in {A_{p,q}}$, $1 \le \theta  \le \infty$, $0 < s < 1$, and a group of non-negative measurable functions $({{\varphi }_1},{\varphi _2})$ satisfy the condition $(\ref{con8})$.
	\begin{enumerate}[(i)]
		\item If $\frac{1}{p} - \frac{1}{q} = \frac{2}{n}$, then we have 
		\begin{equation*}
		{\left\| f \right\|_{BM_s^{q\theta ,{\varphi _2}}({\omega ^q})}} \lesssim {\left\| {{\mathcal L}f} \right\|_{BM_s^{p\theta ,{\varphi _1}}({\omega ^p})}}.
		\end{equation*}
		\item If $\frac{1}{p} - \frac{1}{q} = \frac{1}{n}$, then we have	
		\begin{equation*}
		{\left\| {{X_i}f} \right\|_{BM_s^{q\theta ,{\varphi _2}}({\omega ^q})}} \lesssim {\left\| {{\mathcal L}f} \right\|_{BM_s^{p\theta ,{\varphi _1}}({\omega ^p})}}, i = 1, \ldots ,\gamma.
		\end{equation*}
	\end{enumerate}
\end{thm}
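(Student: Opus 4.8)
The plan is to reduce both estimates to the linear ($m=1$) boundedness of the fractional integral operator $\mathcal{I}_\alpha$ on generalized weighted Besov--Morrey spaces, exactly as in the Sobolev--Stein embedding theorem above, by invoking the fundamental solution of the sub-Laplacian. Let $\Gamma$ be the fundamental solution of $\mathcal{L}=\sum_{i=1}^\gamma X_i^2$ and let $\rho$ be a fixed homogeneous norm (here $n$ denotes the homogeneous dimension). Recall the standard facts that $\Gamma$ is homogeneous of degree $2-n$ with $\Gamma(y)\approx\rho(y)^{2-n}$, while each horizontal derivative $X_i\Gamma$ is homogeneous of degree $1-n$ with $|X_i\Gamma(y)|\lesssim\rho(y)^{1-n}$. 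Hence, for $f\in C_c^\infty$ we have the representations $f=\Gamma*(\mathcal{L}f)$ and $X_if=(X_i\Gamma)*(\mathcal{L}f)$, where $*$ is the group convolution.

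For part (i), I would use $\Gamma(y)\approx\rho(y)^{2-n}$ to pass from the representation to the pointwise domination
\begin{equation*}
|f(x)|\lesssim\int\frac{|\mathcal{L}f(y)|}{\rho(y^{-1}x)^{\,n-2}}\,dy=\mathcal{I}_2(|\mathcal{L}f|)(x),
\end{equation*}
so that $f$ is controlled by the fractional integral of order $\alpha=2$ applied to $\mathcal{L}f$. Since $\tfrac1p-\tfrac1q=\tfrac2n$ corresponds to $\alpha=2$ and $(\varphi_1,\varphi_2)$ satisfies $(\ref{con8})$, which is exactly the $m=1$ form of condition $(\ref{con3})$, the Besov--Morrey boundedness of $\mathcal{I}_\alpha$ gives $\|f\|_{BM_s^{q\theta,\varphi_2}(\omega^q)}\lesssim\|\mathcal{I}_2(|\mathcal{L}f|)\|_{BM_s^{q\theta,\varphi_2}(\omega^q)}\lesssim\|\mathcal{L}f\|_{BM_s^{p\theta,\varphi_1}(\omega^p)}$, which is the claim.

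For part (ii), the identical scheme applies after one horizontal differentiation: the bound $|X_i\Gamma(y)|\lesssim\rho(y)^{1-n}$ yields $|X_if(x)|\lesssim\mathcal{I}_1(|\mathcal{L}f|)(x)$, a fractional integral of order $\alpha=1$. The hypothesis $\tfrac1p-\tfrac1q=\tfrac1n$ matches $\alpha=1$, and $(\ref{con8})$ again supplies the condition needed for the mapping property of $\mathcal{I}_1$, so the same two lines give $\|X_if\|_{BM_s^{q\theta,\varphi_2}(\omega^q)}\lesssim\|\mathcal{L}f\|_{BM_s^{p\theta,\varphi_1}(\omega^p)}$ uniformly in $i=1,\dots,\gamma$.

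The only step requiring genuine care beyond bookkeeping is the justification of the two pointwise domination inequalities, namely that the kernels $\Gamma$ and $X_i\Gamma$---smooth off the origin and homogeneous of degrees $2-n$ and $1-n$---are majorized in absolute value by the Riesz-type kernels $\rho(\cdot)^{\alpha-n}$ with $\alpha=2$ and $\alpha=1$. This is classical for the sub-Laplacian (see \cite{Gu8}) and, once granted, reduces everything to the already-established boundedness of $\mathcal{I}_\alpha$; the bookkeeping that $(\ref{con8})$ is the $m=1$ specialization of $(\ref{con3})$ and that $\omega\in A_{p,q}$ provides the requisite $A_{\vec P,q}$ hypothesis is then routine.
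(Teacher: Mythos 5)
Your overall strategy---representing $f$ and $X_if$ through the fundamental solution $\Gamma$ of $\mathcal L$ and reducing to the $m=1$ boundedness of $\mathcal I_\alpha$ with $\alpha=2$, resp.\ $\alpha=1$, under (\ref{con8})---is precisely the reduction the paper intends (its proof is omitted with a pointer to \cite{Gu8}), and your exponent bookkeeping is correct. However, there is a genuine gap in the step where you pass the pointwise domination $|f|\lesssim\mathcal I_2(|\mathcal Lf|)$ through the Besov--Morrey norm to get $\|f\|_{BM_s^{q\theta,\varphi_2}(\omega^q)}\lesssim\|\mathcal I_2(|\mathcal Lf|)\|_{BM_s^{q\theta,\varphi_2}(\omega^q)}$. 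The norm $\|\cdot\|_{BM_s^{q\theta,\varphi_2}(\omega^q)}$ contains the difference seminorm $\bigl(\int \|f(x\cdot)-f(\cdot)\|_{M^{q,\varphi_2}(\omega^q)}^{\theta}\,\rho(x)^{-n-s\theta}dx\bigr)^{1/\theta}$, and this seminorm is \emph{not} monotone under pointwise majorization of absolute values: only the Morrey component is. For instance, if $g\ge 0$ is smooth with compact support and $f=\varepsilon g$ with a sign function $\varepsilon=\pm1$ oscillating at all scales, then $|f|\le g$, yet $\|f(x\cdot)-f(\cdot)\|_{M^{q,\varphi_2}}$ stays bounded below as $\rho(x)\to0$, so the Besov seminorm of $f$ diverges while that of $g$ is finite. (Your second inequality is fine, since $\bigl||g|(xy)-|g|(y)\bigr|\le|g(xy)-g(y)|$ gives $\||g|\|_{BM}\le\|g\|_{BM}$; it is the first inequality that fails as stated.)

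The repair is standard and is what the $m=1$ argument in \cite{Gu8} actually does: do not take absolute values before taking differences. Work with the linear operator $Tg=g*\Gamma$ (and $g*(X_i\Gamma)$), noting that on a Carnot group the correct representations are $f=(\mathcal Lf)*\Gamma$ and $X_if=(\mathcal Lf)*(X_i\Gamma)$, since $X(g*\Gamma)=g*(X\Gamma)$ for left-invariant $X$ (group convolution is noncommutative, so your $f=\Gamma*(\mathcal Lf)$ should be corrected; this is harmless for the absolute-value bounds but essential here). Because left translations commute with $\mathcal L$ and with right convolution, $f(x\cdot)-f(\cdot)=\bigl(\mathcal Lf(x\cdot)-\mathcal Lf(\cdot)\bigr)*\Gamma$, so the kernel bound $|\Gamma|\lesssim\rho^{2-n}$ gives the pointwise estimate $|f(xy)-f(y)|\lesssim\mathcal I_2\bigl(|\mathcal Lf(x\cdot)-\mathcal Lf(\cdot)|\bigr)(y)$; now monotonicity of the Morrey norm and the $m=1$ Morrey boundedness of $\mathcal I_2$ (the case of Theorem \ref{thm7} proved in \cite{Gu8}) bound each difference by $\|\mathcal Lf(x\cdot)-\mathcal Lf(\cdot)\|_{M^{p,\varphi_1}(\omega^p)}$, and integrating in $x$ against $\rho(x)^{-n-s\theta}$ yields the seminorm estimate, while the non-difference Morrey part is handled by your pointwise domination exactly as written. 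The same scheme with $X_i\Gamma$ and $\alpha=1$ gives part (ii). With this modification your proof coincides with the intended one.
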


\bigskip

\noindent Xi Cen

\smallskip

\noindent {\it Address:} School of Mathematics and Physics, Southwest University of Science and Technology, Mianyang, 621010, P. R. China

\smallskip

\noindent {\it E-mail:} xicenmath@gmail.com

\bigskip

\noindent Qianjun He

\smallskip

\noindent {\it Address:} Qianjun He: School of Applied Science, Beijing Information Science and Technology University, Beijing, 100192, P. R. China.

\smallskip

\noindent {\it E-mail:} qjhe@bistu.edu.cn

\bigskip

\noindent Xiang Li

\smallskip

\noindent {\it Address:} School of Science, Shandong Jianzhu University, Jinan, 250000, P. R. China

\smallskip

\noindent {\it E-mail:} lixiang162@mails.ucas.ac.cn

\bigskip

\noindent Dunyan Yan

\smallskip

\noindent {\it Address:} School of Mathematical Sciences, University of Chinese Academy of Sciences, Beijing, 100049, P. R. China

\smallskip

\noindent {\it E-mail:} ydunyan@ucas.ac.cn

\begin{thebibliography}{10}
	
	\bibitem{Gu9} 
	V. Burenkov, A. Gogatishvili, V.S. Guliyev and R. Mustafayev, \emph{Boundedness of the fractional maximal operator in local Morrey-type spaces}, Complex Var. Elliptic,  \textbf{55} (8--10) (2010), 739--758.
	
	\bibitem{cen1} 
	X. Cen, \emph{The multilinear Littlewood-Paley square operators and their commutators on weighted Morrey spaces}, Indian J. Pure Appl. Math., 2023.
	
	\bibitem{cen2} 
	X. Cen, X. Li and D. Yan, \emph{Characterizations for multi-sublinear operators and their commutators on three kinds of generalized weighted Morrey spaces and applications},
	arXiv preprint arXiv:2305.13133, 2023.
	
	\bibitem{chen-wu} 
	S. Chen and H. Wu, \emph{Multiple weighted estimates for commutators of multilinear fractional integral operators}, Sci. China Math., 2013, \textbf{56} (2013), 1879--1894.
	
	\bibitem{Chen-Xue} 
	X. Chen and Q. Xue, \emph{Weighted estimates for a class of multilinear fractional type operators}, J. Math. Anal. Appl., 2010, \textbf{362} (2) (2010), 355--373.
	
	\bibitem{Christ1987Polynomial}
	M. Christ  and J. Journ\'{e}, \emph{Polynomial growth estimates for multilinear singular integral operators}. Acta Math., \textbf{159} (1987), 51--80.
	
	\bibitem{Coifman1975On}
	R. R. Coifman and Y. Meyer, \emph{On commutators of singular integrals and bilinear singular integrals}, Trans. Amer. Math. Soc., \textbf{212} (1975), 315--331.
	
	\bibitem{Coifman1976Factorization}
	R. R. Coifman, R. Rochberg, and G. Weiss, \emph{Factorization theorems for Hardy spaces in several variables}, Ann. of Math.  \textbf{103} (2) (1976), 611--635.
	
	\bibitem{Coifman1978commutators}
	R. R. Coifman and Y. Meyer, \emph{Commutateurs d'int\'{e}grales singuli\'{e}res et op\'{e}rateurs multilint\'{e}aires}, Ann. Inst. Fourier (Grenoble), \textbf{28} (3) (1978), 177--202.
	
	\bibitem{Demeter2008Maximal}
	C. Demeter, T. Tao and C. Thiele, \emph{Maximal multilinear operators}, Trans. Amer. Math. Soc., \textbf{360} (9) (2008), 4989--5042.
	
	\bibitem{Grafakos2002Multilinear}
	L. Grafakos and R. H. Torres, \emph{Multilinear Calder\'{o}n-Zygmund theory}, Adv. Math., \textbf{165} (1) (2002), 124--164.
	
	\bibitem{Grafakos2002Maximal}
	L. Grafakos and R.H. Torres, \emph{Maximal operator and weighted norm inequalities for multilinear singular integrals}, Indiana Univ. Math. J., \textbf{51} (5) (2002), 1261--1276.
	
	\bibitem{Gra1} 
	L. Grafakos, \emph{Classical Fourier Analysis}, third edition, Graduate Texts in Mathematics, 249. Springer, New York, 2014. 
	
	\bibitem{Gra2} 
	L. Grafakos, \emph{Modern Fourier Analysis}, third edition, Graduate Texts in Mathematics, 250. Springer, New York, 2014. 
	
	\bibitem{Gu1} 
	V.S. Guliyev, \emph{Generalized weighted Morrey spaces and higher order commutators of sublinear operators}, Eurasian Math. J., \textbf{3} (3) (2012), 33--61.
	
	\bibitem{Gu4} 
	V.S. Guliyev, \emph{Generalized local Morrey spaces and fractional integral operators with rough kernel}, J. Math. Sci., \textbf{193} (2013), 211--227.
	
	\bibitem{Gu7} 
	V.S. Guliyev, J.J. Hasanov and X.A. Badalov,  \emph{Maximal and singular integral operators and their commutators on generalized weighted Morrey spaces with variable exponent}, Math. Inequal. Appl., \textbf{21} (1) (2018), 41--61.
	
	\bibitem{Gu5} 
	V.S. Guliyev, \emph{Calder\'{o}n-Zygmund operators with kernels of Dini?s type on generalized weighted variable exponent Morrey spaces},
	\textit{Positivity}, \textbf{25} (5) (2021), 1771--1788.
	
	\bibitem{Gu6} 
	V.S. Guliyev and A.F. İsmayilova, \emph{Calder\'{o}n-Zygmund operators with kernels of Dini’s type and their multilinear commutators on generalized variable exponent Morrey spaces},  2021.
	
	\bibitem{Gu10} 
	V.S. Guliyev, \emph{Characterizations for the fractional maximal operator and its commutators in generalized weighted Morrey spaces on Carnot groups} 
	Anal. Math. Phys., \textbf{10} (2) (2020), 1--20. 
	
	\bibitem{Gu8} 
	V.S. Guliyev and I. Ekincioglu, \emph{Characterizations for the Fractional Integral Operator and its Commutators in Generalized Weighted Morrey Spaces on Carnot Groups}, J. Math. Inequal., 2021, \textbf{15} (1) (2021), 151--171.
	
	\bibitem{Gu2} 
	V.S. Guliyev, \emph{Commutators of multilinear Calder\'{o}n?Zygmund operators with kernels of Dini's type on generalized weighted Morrey spaces and applications},
	Positivity, \textbf{27} (1) (2023), 1--29. 
	
	\bibitem{Gu3} 
	V.S. Guliyev, F.A. Isayev and A. Serbetci, \emph{Multilinear Calder\'{o}n?Zygmund operators with kernels of Dini's type and their commutators on generalized local Morrey spaces}, Trans. Natl. Acad. Sci. Azerb. Ser. Phys. Technol. Math. Sci., \textbf{42} (4) (2022), 46--64.
	
	\bibitem{He} 
	Q. He, X. Wu and D. Yan, \emph{Bounds for Multilinear Operators under an Integral Type Condition on Morrey Spaces}, Acta Math. Sci., \textbf{42} (3) (2022), 1191--1208.
	
	\bibitem{Hu-Li-Wang} 
	Y. Hu, Z. Li and Y. Wang, \emph{Multilinear singular integral operators on generalized weighted Morrey spaces}, J. Funct. Space, \textbf{2014}, 2014.
	
	\bibitem{Hu-Wang} 
	Y. Hu and Y. Wang, \emph{Multilinear fractional integral operators on generalized weighted Morrey spaces}, J. Inequal. Appl., \textbf{24} (1) (2014), 1--18.
	
	\bibitem{Ismayilova} 
	A.F. Ismayilova, \emph{Calder\'{o}n-Zygmund operators with kernels of Dini's type and their multilinear commutators on generalized Morrey spaces}, Trans. Natl. Acad. Sci. Azerb. Ser. Phys.-Tech. Math. Sci. Math., \textbf{41} (4) (2021), 1--12.
	
	\bibitem{Kenig1999Multilinear}
	C. E. Kenig and  E. M. Stein, \emph{Multilinear estimates and fractional integration}, Math. Res. Lett., \textbf{6} (1999), 1--15.
	
	
	\bibitem{Komori} 
	Y. Komori and S. Shirai, \emph{Weighted Morrey spaces and a singular integral operator}, Math. Nachr., \textbf{282} (2) (2009), 219--231.
	
	\bibitem{Lerner} 
	A. K. Lerner, S. Ombrosi, C. P\'{e}rez, R. H. Torres and R. Trujillo-Gonz\'{a}lez, \emph{New maximal functions and multiple weights for the multilinear Calder\'{o}n?Zygmund theory}, Adv. Math., \textbf{220} (4) (2009), 1222--1264.
	
	\bibitem{Lin-Yan} 
	Y. Lin and H. Yan, \emph{Multilinear strongly singular Calderón-Zygmund operators and commutators on Morrey type spaces}, Jordan J. Math. Stat., \textbf{14} (2021), 351--375.
	
	\bibitem{LuYangZhou} 
	S. Lu, D. Yang and Z. Zhou, \emph{Sublinear operators with rough kernel on generalized Morrey spaces}, Hokkaido Math. J., \textbf{27} (1) (1998), 219--232.
	
	\bibitem{Moen} 
	K. Moen, \emph{Weighted inequalities for multilinear fractional integral operators}, Collect. Math., \textbf{60} (2) (2009), 213--238.
	
	\bibitem{Morrey} 
	C. B. Morrey, \emph{On the solutions of quasi-linear elliptic partial differential equations}, Trans. Amer. Math. Soc., \textbf{43} (1938), 126--166.
	
	\bibitem{Shi-Sun} 
	X. Shi, Q. Sun, \emph{Weighted norm inequalities for Bochner-Riesz operators and singular integral operators}, Proc. Amer. Math. Soc., 1992; \textbf{116} (3):665–673.
	
	\bibitem{xue7} 
	Q. Xue, \emph{Weighted estimates for the iterated commutators of multilinear maximal and fractional type operators}, Studia Math., \textbf{217} (2) (2013), 97--122.
\end{thebibliography}
\end{document}